\providecommand{\keywords}[1]{\textbf{\textit{Keywords.}} #1}
\providecommand{\AMSclass}[1]{\textbf{\textit{AMS classification.}} #1}
\title{Hopf-algebraic structures on mixed graphs}
\date{}
\author{Lo\"ic Foissy}
\affil{\small{Univ. Littoral Côte d'Opale, UR 2597
LMPA, Laboratoire de Mathématiques Pures et Appliquées Joseph Liouville
F-62100 Calais, France}.\\ Email: \texttt{foissy@univ-littoral.fr}}
\newcommand{\grun}{
\hspace{-1mm}\begin{array}{c}\begin{tikzpicture}[line cap=round,line join=round,>=angle 60,x=0.3cm,y=0.3cm]
\definecolor{cool}{rgb}{0.,0.8,0.}
\clip(3.4,-0.6) rectangle (4.6,0.6);
\draw [line width=2.pt,fill=black,fill opacity=1.0] (4.,0.) circle (0.03cm);
\end{tikzpicture}\end{array}\hspace{-1mm}}
\newcommand{\grdeuxvide}{
\hspace{-2mm}\begin{array}{c}\begin{tikzpicture}[line cap=round,line join=round,>=angle 60,x=0.3cm,y=0.3cm]
\clip(3.4,-0.6) rectangle (6.6,0.6);
\draw [line width=2.pt,fill=black,fill opacity=1.0] (4.,0.) circle (0.03cm);
\draw [line width=2.pt,fill=black,fill opacity=1.0] (6.,0.) circle (0.03cm);
\end{tikzpicture}\end{array}\hspace{-2mm}}
\newcommand{\grdeux}{
\hspace{-2mm}\begin{array}{c}\begin{tikzpicture}[line cap=round,line join=round,>=angle 60,x=0.3cm,y=0.3cm]
\clip(3.4,-0.6) rectangle (6.6,0.6);
\draw [line width=0.4pt] (4.2,0.)-- (5.8,0.);
\draw [line width=2.pt,fill=black,fill opacity=1.0] (4.,0.) circle (0.03cm);
\draw [line width=2.pt,fill=black,fill opacity=1.0] (6.,0.) circle (0.03cm);
\end{tikzpicture}\end{array}\hspace{-2mm}}
\newcommand{\grdeuxo}{
\hspace{-2mm}\begin{array}{c}\begin{tikzpicture}[line cap=round,line join=round,>=angle 60,x=0.3cm,y=0.3cm]
\clip(3.4,-0.6) rectangle (6.6,0.6);
\draw [->,line width=0.4pt] (4.2,0.)-- (5.8,0.);
\draw [line width=2.pt,fill=black,fill opacity=1.0] (4.,0.) circle (0.03cm);
\draw [line width=2.pt,fill=black,fill opacity=1.0] (6.,0.) circle (0.03cm);
\end{tikzpicture}\end{array}\hspace{-2mm}}
\newcommand{\grdeuxoo}{
\hspace{-2mm}\begin{array}{c}\begin{tikzpicture}[line cap=round,line join=round,>=angle 60,x=0.3cm,y=0.3cm]
\clip(3.4,-0.6) rectangle (6.6,0.6);
\draw [<->,line width=0.4pt] (4.2,0.)-- (5.8,0.);
\draw [line width=2.pt,fill=black,fill opacity=1.0] (4.,0.) circle (0.03cm);
\draw [line width=2.pt,fill=black,fill opacity=1.0] (6.,0.) circle (0.03cm);
\end{tikzpicture}\end{array}\hspace{-2mm}}
\newcommand{\grtrois}{
\hspace{-2mm}\begin{array}{c}\begin{tikzpicture}[line cap=round,line join=round,>=angle 60,x=0.3cm,y=0.3cm]
\clip(3.4,-0.6) rectangle (6.6,2.6);
\draw [line width=0.4pt] (4.2,0.)-- (5.8,0.);
\draw [line width=0.4pt] (4.1,0.15)-- (4.9,2.21);
\draw [line width=2.pt,fill=black,fill opacity=1.0] (4.,0.) circle (0.03cm);
\draw [line width=2.pt,fill=black,fill opacity=1.0] (6.,0.) circle (0.03cm);
\draw [line width=2.pt,fill=black,fill opacity=1.0] (5.,2.4) circle (0.03cm);
\end{tikzpicture}\end{array}\hspace{-2mm}}
\newcommand{\grtroisoe}{
\hspace{-2mm}\begin{array}{c}\begin{tikzpicture}[line cap=round,line join=round,>=angle 60,x=0.3cm,y=0.3cm]
\clip(3.4,-0.6) rectangle (6.6,2.6);
\draw [line width=0.4pt] (4.2,0.)-- (5.8,0.);
\draw [<-,line width=0.4pt] (4.1,0.15)-- (4.9,2.21);
\draw [line width=2.pt,fill=black,fill opacity=1.0] (4.,0.) circle (0.03cm);
\draw [line width=2.pt,fill=black,fill opacity=1.0] (6.,0.) circle (0.03cm);
\draw [line width=2.pt,fill=black,fill opacity=1.0] (5.,2.4) circle (0.03cm);
\end{tikzpicture}\end{array}\hspace{-2mm}}
\newcommand{\grtroiseo}{
\hspace{-2mm}\begin{array}{c}\begin{tikzpicture}[line cap=round,line join=round,>=angle 60,x=0.3cm,y=0.3cm]
\clip(3.4,-0.6) rectangle (6.6,2.6);
\draw [->,line width=0.4pt] (4.2,0.)-- (5.8,0.);
\draw [line width=0.4pt] (4.1,0.15)-- (4.9,2.21);
\draw [line width=2.pt,fill=black,fill opacity=1.0] (4.,0.) circle (0.03cm);
\draw [line width=2.pt,fill=black,fill opacity=1.0] (6.,0.) circle (0.03cm);
\draw [line width=2.pt,fill=black,fill opacity=1.0] (5.,2.4) circle (0.03cm);
\end{tikzpicture}\end{array}\hspace{-2mm}}
\newcommand{\grtroisooe}{
\hspace{-2mm}\begin{array}{c}\begin{tikzpicture}[line cap=round,line join=round,>=angle 60,x=0.3cm,y=0.3cm]
\clip(3.4,-0.6) rectangle (6.6,2.6);
\draw [line width=0.4pt] (4.2,0.)-- (5.8,0.);
\draw [<->,line width=0.4pt] (4.1,0.15)-- (4.9,2.21);
\draw [line width=2.pt,fill=black,fill opacity=1.0] (4.,0.) circle (0.03cm);
\draw [line width=2.pt,fill=black,fill opacity=1.0] (6.,0.) circle (0.03cm);
\draw [line width=2.pt,fill=black,fill opacity=1.0] (5.,2.4) circle (0.03cm);
\end{tikzpicture}\end{array}\hspace{-2mm}}
\newcommand{\grtroisoo}{
\hspace{-2mm}\begin{array}{c}\begin{tikzpicture}[line cap=round,line join=round,>=angle 60,x=0.3cm,y=0.3cm]
\clip(3.4,-0.6) rectangle (6.6,2.6);
\draw [->,line width=0.4pt] (4.2,0.)-- (5.8,0.);
\draw [<-,line width=0.4pt] (4.1,0.15)-- (4.9,2.21);
\draw [line width=2.pt,fill=black,fill opacity=1.0] (4.,0.) circle (0.03cm);
\draw [line width=2.pt,fill=black,fill opacity=1.0] (6.,0.) circle (0.03cm);
\draw [line width=2.pt,fill=black,fill opacity=1.0] (5.,2.4) circle (0.03cm);
\end{tikzpicture}\end{array}\hspace{-2mm}}
\newcommand{\grtroisoodeux}{
\hspace{-2mm}\begin{array}{c}\begin{tikzpicture}[line cap=round,line join=round,>=angle 60,x=0.3cm,y=0.3cm]
\clip(3.4,-0.6) rectangle (6.6,2.6);
\draw [->,line width=0.4pt] (4.2,0.)-- (5.8,0.);
\draw [->,line width=0.4pt] (4.1,0.15)-- (4.9,2.21);
\draw [line width=2.pt,fill=black,fill opacity=1.0] (4.,0.) circle (0.03cm);
\draw [line width=2.pt,fill=black,fill opacity=1.0] (6.,0.) circle (0.03cm);
\draw [line width=2.pt,fill=black,fill opacity=1.0] (5.,2.4) circle (0.03cm);
\end{tikzpicture}\end{array}\hspace{-2mm}}
\newcommand{\grtroisooo}{
\hspace{-2mm}\begin{array}{c}\begin{tikzpicture}[line cap=round,line join=round,>=angle 60,x=0.3cm,y=0.3cm]
\clip(3.4,-0.6) rectangle (6.6,2.6);
\draw [->,line width=0.4pt] (4.2,0.)-- (5.8,0.);
\draw [<->,line width=0.4pt] (4.1,0.15)-- (4.9,2.21);
\draw [line width=2.pt,fill=black,fill opacity=1.0] (4.,0.) circle (0.03cm);
\draw [line width=2.pt,fill=black,fill opacity=1.0] (6.,0.) circle (0.03cm);
\draw [line width=2.pt,fill=black,fill opacity=1.0] (5.,2.4) circle (0.03cm);
\end{tikzpicture}\end{array}\hspace{-2mm}}
\newcommand{\grtroisooodeux}{
\hspace{-2mm}\begin{array}{c}\begin{tikzpicture}[line cap=round,line join=round,>=angle 60,x=0.3cm,y=0.3cm]
\clip(3.4,-0.6) rectangle (6.6,2.6);
\draw [<-,line width=0.4pt] (4.2,0.)-- (5.8,0.);
\draw [<->,line width=0.4pt] (4.1,0.15)-- (4.9,2.21);
\draw [line width=2.pt,fill=black,fill opacity=1.0] (4.,0.) circle (0.03cm);
\draw [line width=2.pt,fill=black,fill opacity=1.0] (6.,0.) circle (0.03cm);
\draw [line width=2.pt,fill=black,fill opacity=1.0] (5.,2.4) circle (0.03cm);
\end{tikzpicture}\end{array}\hspace{-2mm}}
\newcommand{\grtroisoooo}{
\hspace{-2mm}\begin{array}{c}\begin{tikzpicture}[line cap=round,line join=round,>=angle 60,x=0.3cm,y=0.3cm]
\clip(3.4,-0.6) rectangle (6.6,2.6);
\draw [<->,line width=0.4pt] (4.2,0.)-- (5.8,0.);
\draw [<->,line width=0.4pt] (4.1,0.15)-- (4.9,2.21);
\draw [line width=2.pt,fill=black,fill opacity=1.0] (4.,0.) circle (0.03cm);
\draw [line width=2.pt,fill=black,fill opacity=1.0] (6.,0.) circle (0.03cm);
\draw [line width=2.pt,fill=black,fill opacity=1.0] (5.,2.4) circle (0.03cm);
\end{tikzpicture}\end{array}\hspace{-2mm}}
\newcommand{\grcompletun}{
\hspace{-2mm}\begin{array}{c}\begin{tikzpicture}[line cap=round,line join=round,>=angle 60,x=0.3cm,y=0.3cm]
\clip(3.4,-0.6) rectangle (6.6,2.6);
\draw [line width=0.4pt] (4.2,0.)-- (5.8,0.);
\draw [line width=0.4pt] (4.1,0.15)-- (4.9,2.21);
\draw [line width=0.4pt] (5.9,0.15)-- (5.1,2.21);
\draw [line width=2.pt,fill=black,fill opacity=1.0] (4.,0.) circle (0.03cm);
\draw [line width=2.pt,fill=black,fill opacity=1.0] (6.,0.) circle (0.03cm);
\draw [line width=2.pt,fill=black,fill opacity=1.0] (5.,2.4) circle (0.03cm);
\end{tikzpicture}\end{array}\hspace{-2mm}}
\newcommand{\grcompletdeux}{
\hspace{-2mm}\begin{array}{c}\begin{tikzpicture}[line cap=round,line join=round,>=angle 60,x=0.3cm,y=0.3cm]
\clip(3.4,-0.6) rectangle (6.6,2.6);
\draw [->,line width=0.4pt] (4.2,0.)-- (5.8,0.);
\draw [line width=0.4pt] (4.1,0.15)-- (4.9,2.21);
\draw [line width=0.4pt] (5.9,0.15)-- (5.1,2.21);
\draw [line width=2.pt,fill=black,fill opacity=1.0] (4.,0.) circle (0.03cm);
\draw [line width=2.pt,fill=black,fill opacity=1.0] (6.,0.) circle (0.03cm);
\draw [line width=2.pt,fill=black,fill opacity=1.0] (5.,2.4) circle (0.03cm);
\end{tikzpicture}\end{array}\hspace{-2mm}}
\newcommand{\grcomplettrois}{
\hspace{-2mm}\begin{array}{c}\begin{tikzpicture}[line cap=round,line join=round,>=angle 60,x=0.3cm,y=0.3cm]
\clip(3.4,-0.6) rectangle (6.6,2.6);
\draw [->,line width=0.4pt] (4.2,0.)-- (5.8,0.);
\draw [->,line width=0.4pt] (4.1,0.15)-- (4.9,2.21);
\draw [line width=0.4pt] (5.9,0.15)-- (5.1,2.21);
\draw [line width=2.pt,fill=black,fill opacity=1.0] (4.,0.) circle (0.03cm);
\draw [line width=2.pt,fill=black,fill opacity=1.0] (6.,0.) circle (0.03cm);
\draw [line width=2.pt,fill=black,fill opacity=1.0] (5.,2.4) circle (0.03cm);
\end{tikzpicture}\end{array}\hspace{-2mm}}
\newcommand{\grcompletquatre}{
\hspace{-2mm}\begin{array}{c}\begin{tikzpicture}[line cap=round,line join=round,>=angle 60,x=0.3cm,y=0.3cm]
\clip(3.4,-0.6) rectangle (6.6,2.6);
\draw [->,line width=0.4pt] (4.2,0.)-- (5.8,0.);
\draw [<-,line width=0.4pt] (4.1,0.15)-- (4.9,2.21);
\draw [line width=0.4pt] (5.9,0.15)-- (5.1,2.21);
\draw [line width=2.pt,fill=black,fill opacity=1.0] (4.,0.) circle (0.03cm);
\draw [line width=2.pt,fill=black,fill opacity=1.0] (6.,0.) circle (0.03cm);
\draw [line width=2.pt,fill=black,fill opacity=1.0] (5.,2.4) circle (0.03cm);
\end{tikzpicture}\end{array}\hspace{-2mm}}
\newcommand{\grcompletcinq}{
\hspace{-2mm}\begin{array}{c}\begin{tikzpicture}[line cap=round,line join=round,>=angle 60,x=0.3cm,y=0.3cm]
\clip(3.4,-0.6) rectangle (6.6,2.6);
\draw [->,line width=0.4pt] (4.2,0.)-- (5.8,0.);
\draw [<-,line width=0.4pt] (4.1,0.15)-- (4.9,2.21);
\draw [->,line width=0.4pt] (5.9,0.15)-- (5.1,2.21);
\draw [line width=2.pt,fill=black,fill opacity=1.0] (4.,0.) circle (0.03cm);
\draw [line width=2.pt,fill=black,fill opacity=1.0] (6.,0.) circle (0.03cm);
\draw [line width=2.pt,fill=black,fill opacity=1.0] (5.,2.4) circle (0.03cm);
\end{tikzpicture}\end{array}\hspace{-2mm}}
\newcommand{\grcompletsix}{
\hspace{-2mm}\begin{array}{c}\begin{tikzpicture}[line cap=round,line join=round,>=angle 60,x=0.3cm,y=0.3cm]
\clip(3.4,-0.6) rectangle (6.6,2.6);
\draw [->,line width=0.4pt] (4.2,0.)-- (5.8,0.);
\draw [<-,line width=0.4pt] (4.1,0.15)-- (4.9,2.21);
\draw [<-,line width=0.4pt] (5.9,0.15)-- (5.1,2.21);
\draw [line width=2.pt,fill=black,fill opacity=1.0] (4.,0.) circle (0.03cm);
\draw [line width=2.pt,fill=black,fill opacity=1.0] (6.,0.) circle (0.03cm);
\draw [line width=2.pt,fill=black,fill opacity=1.0] (5.,2.4) circle (0.03cm);
\end{tikzpicture}\end{array}\hspace{-2mm}}
\theoremstyle{plain}
\newtheorem{theo}{Theorem}[section]
\newtheorem{lemma}[theo]{Lemma}
\newtheorem{cor}[theo]{Corollary}
\newtheorem{prop}[theo]{Proposition}
\newtheorem{defi}[theo]{Definition}
\theoremstyle{remark}
\newtheorem{remark}{Remark}[section]
\newtheorem{notation}{Notations}[section]
\newtheorem{example}{Example}[section]
\newcommand{\rond}[1]{*++[o][F-]{#1}}
\newcommand{\K}{\mathbb{K}}
\newcommand{\N}{\mathbb{N}}
\newcommand{\calG}{\mathscr{G}}
\newcommand{\bfG}{\mathbf{G}}
\newcommand{\calO}{\mathcal{O}}
\newcommand{\arc}[1]{\stackrel{_{#1}}{\rightarrow}}
\newcommand{\arete}[1]{\stackrel{_{#1}}{\relbar}}
\newcommand{\id}{\mathrm{Id}}
\newcommand{\com}{\mathbf{Com}}
\newcommand{\bfP}{\mathbf{P}}
\newcommand{\bfQ}{\mathbf{Q}}
\newcommand{\eq}{\mathcal{E}}
\newcommand{\cl}{\mathrm{cl}}
\newcommand{\bfI}{\mathbf{I}}
\newcommand{\sym}{\mathfrak{S}}
\newcommand{\Char}{\mathrm{Char}}
\newcommand{\tdelta}{\tilde{\Delta}}
\newcommand{\vect}{\mathrm{Vect}}
\renewcommand{\ker}{\mathrm{Ker}}
\newcommand{\VPC}{\mathrm{VPC}}
\newcommand{\WVPC}{\mathrm{WVPC}}
\newcommand{\cc}{\mathrm{cc}}
\newcommand{\Endo}{\mathrm{End}}
\newcommand{\calF}{\mathcal{F}}
\newcommand{\caltopo}{\mathscr{T}\hspace{-1mm}\mathit{opo}}
\newcommand{\bftopo}{\mathbf{Topo}}
\newcommand{\calpos}{\caltopo_{T_0}}
\newcommand{\bfpos}{\bftopo_{T_0}}
\newcommand{\ehr}{\mathrm{ehr}}
\newcommand{\Ehr}{\mathrm{Ehr}}
\newcommand{\coinv}{\mathrm{coInv}}
\newcommand{\fix}{\mathrm{Fix}}
\begin{document}

\maketitle

\begin{abstract}
We introduce two coproducts on mixed graphs (that is to say graphs with both oriented and unoriented edges), 
the first one by separation of the vertices into two parts, and the second one given by contraction and extractions of subgraphs. 
We show that, with the disjoint union product, this gives a double bialgebra, that is to say that the first coproduct
makes it a Hopf algebra in the category of right comodules over the second coproduct. 

This structure implies the existence of a unique polynomial invariant on mixed graphs compatible with the product
and both coproducts: we prove that it is the (strong) chromatic polynomial of Beck, Bogart and Pham.
Using the action of the monoid of characters, we relate it to the weak chromatic polynomial, as well to Ehrhart polynomials
and to a polynomial invariants related to linear extensions. 
As applications, we give an algebraic proof of the link between the values of the strong chromatic polynomial
at negative values and acyclic orientations (a result due to Beck, Blado, Crawford, Jean-Louis and Young) 
and obtain a combinatorial description of the antipode of the Hopf algebra of mixed graphs. 
\end{abstract}

\keywords{double bialgebra; mixed graph; chromatic polynomial; Ehrhart polynomial}\\

\AMSclass{16T05 16T30 05C15 05C25}

\tableofcontents

\listoftables

\section*{Introduction}

Mixed graphs are graphs with both unoriented (which we will simply called edges) and oriented edges (which we will call arcs). They are used for example to study
scheduling problems with disjunctive (represented by unoriented edges) and precedence (represented by oriented edges) constraints
\cite{Sotskov2002,Sotskov2002,Hansen97}. In this context, a notion of admissible coloring, similar to the notion
used for classical graphs, gives a solution of the scheduling problem represented by the mixed graph.
These admissible colorings can be counted according to the number of colors, which gives a chromatic polynomial 
\cite{Beck2012,Beck2015-2}. 
The aim of this text is the study of this chromatic polynomial for mixed graphs in the context of double bialgebras,
as this has been done for graphs in \cite{Foissy36} and for posets and finite topologies in \cite{Foissy37}.
A double bialgebra is a family $(A,m,\Delta,\delta)$ such that:
\begin{itemize}
\item $(A,m,\delta)$ is a bialgebra.
\item $(A,m,\Delta)$ is a bialgebra in the category of right comodules over $(A,m,\delta)$, with the coaction given by $\delta$
itself.
\end{itemize}
In particular, $\delta$ and $\Delta$ satisfies the following compatibility:
\[(\Delta \otimes \id)\circ \delta=(\id \otimes \id \otimes m)\circ (\id \otimes c\otimes \id)\circ (\delta \otimes \delta) \circ \Delta,\]
where $c:A\otimes A\longrightarrow A\otimes A$ is the usual flip. The counit $\varepsilon_\Delta$ of $\Delta$ satisfies the following compatibility with $\delta$: for any $a\in A$,
\[(\varepsilon_\Delta \otimes \id)\circ \delta(x)=\varepsilon_\Delta(x)1_A.\] 
A simple example of such an object is $\K[X]$, with its usual algebra structures and its multiplicative coproducts defined by
\begin{align*}
\Delta(X)&=X\otimes 1+1\otimes X,&\delta(X)&=X\otimes X.
\end{align*}
Double bialgebras play an important role in the study of rough paths and regularity structures, used in the study of stochastics PDEs, with examples based on families of rooted trees with decorations \cite{BrunedChevyrev2019,Bruned2019}.
Other examples of double bialgebras can be found on various families of graphs \cite{
Manchon2012}, posets or finite topologies \cite{Foissy27,Foissy31,Foissy34}, graphs \cite{Manchon2012,Calaque2011,Foissy36}, hypergraphs \cite{Ebrahimi-Fard2024,Foissy44}$\ldots$
We proved in \cite{Foissy37,Foissy36} the following results: if $(A,m,\Delta,\delta)$ is a double bialgebra, under a condition
of connectedness of $(A,m,\Delta)$,
\begin{itemize}
\item There exists a unique double bialgebra morphism $P_0:(A,m,\Delta,\delta)\longrightarrow (\K[X],m,\Delta,\delta)$,
which can be explicitly described with iterations of the coproduct $\Delta$ of $A$ and with the counit $\epsilon_\delta$
of its coproduct $\delta$.
\item Let us denote by $\Char(A)$ the set of characters of $A$. It inherits a convolution product $\star$, dual to $\delta$.
We denote by $\Endo_B(A,\K[X])$ the set of bialgebra morphisms from $(A,m,\Delta)$ to $(\K[X],m,\Delta)$. 
Then \cite[Proposition 2.5]{Foissy40}, the following defines an action of the monoid $(\Char(A),\star)$ on $\Endo_B(A,\K[X])$:
\begin{align*}
\leftsquigarrow&:\left\{\begin{array}{rcl}
\Endo_B(A,\K[X])\times \Char(A)&\longrightarrow&\Endo_B(A,\K[X])\\
(\phi,\lambda)&\longmapsto& \phi\leftsquigarrow \lambda=(\phi\otimes \lambda)\circ \delta.
\end{array}\right.
\end{align*}
Moreover, the two following maps are bijections, inverse to each other:
\begin{align}\label{eqtheta}
\theta&:\left\{\begin{array}{rcl}
\Char(A)&\longrightarrow&\Endo_B(A,\K[X])\\
\lambda&\longmapsto&P_0\leftsquigarrow \lambda,
\end{array}\right.&
\theta^{-1}&:\left\{\begin{array}{rcl}
\Endo_B(A,\K[X])&\longrightarrow&\Char(A)\\
\phi&\longmapsto&\epsilon_\delta \circ \phi,
\end{array}\right.
\end{align}
where here $\epsilon_\delta$ is the counit of $(\K[X],m,\delta)$, which sends any $P\in \K[X]$ onto $P(1)$. 
\item When $(A,m,\Delta)$ is a graded and connected bialgebra, then under a technical condition on $\delta$,
the set of homogeneous bialgebra morphisms from $(A,m,\Delta)$ to $(\K[X],m,\Delta)$ is in bijection
with the dual of the homogeneous component $A_1$ of $A$ of degree $1$. 
\item If $(A,m,\Delta)$ is a Hopf algebra, then its antipode is given by 
\[S=(\epsilon_\delta^{*-1}\otimes \id)\circ \delta,\]
where $\epsilon_\delta^{*-1}$ is the inverse of the counit $\epsilon_\delta$ of $\delta$ for the convolution product
$*$ associated to $\Delta$. Moreover, for any $a\in A$,
\[\epsilon_\delta^{*-1}(a)=P_0(a)(-1).\]
\end{itemize}

We apply here these results on mixed graphs. In the second section, we define a structure of double bialgebras on mixed graphs. 
We work in the species framework and use the formalism built in \cite{Foissy41} of contraction-extraction coproduct, which is shortly described in the first section.
We first give in Proposition \ref{prop1.4} to the species of mixed graphs $\bfG$ a bialgebra structure in the category of species 
(what is commonly called a twisted bialgebra structure), and then a contraction-extraction coproduct in Proposition \ref{prop1.6}.
Consequently, applying the bosonic Fock functor \cite{Aguiar2010}, we obtain a double bialgebra of mixed graphs
$\calF[\bfG]$, and more generally, for any commutative and cocommutative bialgebra $V$, a double bialgebra $\calF_V[\bfG]$
of mixed graphs whose vertices are decorated by elements of $V$. Using Loday and Ronco's rigidity theorem, 
we prove that $(\calF_V[\bfG],\Delta)$ is a cofree coalgebra (Corollary \ref{cor1.9}). 

We study certain sub-objects and quotients of $\bfG$ in the third section. Obviously, simple (i.e., unoriented) graphs
and oriented graphs define twisted double subbialgebras of $\bfG$, denoted respectively by $\bfG_s$ and $\bfG_o$
(Proposition 2.1). Applying the bosonic Fock functor to $\bfG_s$, we obtain again the double bialgebra of graphs  
of \cite{Foissy36,Foissy41}. We also consider the subspecies of  acyclic mixed graphs $\bfG_{aco}$, 
which turns out to be stable under the product and the first coproduct $\Delta$, but not on the second one.
However, quotienting by non acyclic mixed graphs, $\bfG_{ac}$ can be seen as a twisted double bialgebra,
quotient of $\bfG$ (Proposition 2.5). It contains a twisted double subbialgebra of oriented acyclic mixed graphs
$\bfG_{aco}$, which has itself for quotient the twisted double bialgebra of topologies of \cite{Foissy37} (Proposition 2.6). 

The fourth part is devoted to polynomial invariants of mixed graphs, that is to say bialgebra morphisms
from the bialgebra of mixed graphs $(\calF[\bfG],m,\Delta)$ to $(\K[X],m,\Delta)$. We first describe
the unique polynomial invariant compatible with the second coalgebraic structure of mixed graphs:
it turns out to be the strong chromatic polynomial $P_{chr_S}$ of \cite{Beck2012},
see Proposition \ref{prop3.2}. In other words, for any mixed graph $G$, for any $n\geq 1$, $P_{chr_S}(G)(n)$
is the number of $n$-valid colorings of $G$, that is to say maps $c$ from the set of vertices $V(G)$ of $G$ to $\{1,\ldots,n\}$
such that, for any pair of vertices $x,y$ of $G$,
\begin{itemize}
\item If $x$ and $y$ are related by an edge of $G$, then $c(x)\neq c(y)$.
\item If $x$ and $y$ are related by an arc of $G$, then $c(x)<c(y)$. 
\end{itemize}
A notion of weak valid coloring is also defined in \cite{Beck2012}. A weak $n$-valid coloring is 
a map $c$ from the set of vertices $V(G)$ of $G$ to $\{1,\ldots,n\}$ such that, for any pair of vertices $x,y$ of $G$,
\begin{itemize}
\item If $x$ and $y$ are related by an edge of $G$, then $c(x)\neq c(y)$.
\item If $x$ and $y$ are related by an arc of $G$, then $c(c)\leq c(y)$. 
\end{itemize}
The polynomial counting the number of weak $n$-valid colorings of $G$ is denoted by $P_{chr_W}(G)$. 
Using the the action of the monoid of characters described earlier and the character of $\calF[\bfG]$ defined by
\[\lambda_W(G)=\begin{cases}
1\mbox{ if $G$ is an oriented graph},\\
0\mbox{ otherwise},
\end{cases}\]
we obtain that $P_{chr_W}=P_{chr_S}\leftsquigarrow \lambda_W$, which implies that $P_{chr_w}$
is a bialgebra morphism from $(\calF[\bfG],m,\Delta)$ to $(\K[X],m,\Delta)$, see Corollary \ref{cor3.4}.
Finally, using the correspondence between homogeneous polynomial invariants and elements of $\calF[\bfG]_1$,
we construct a homogeneous bialgebra morphism $P_0$ from $(\calF[\bfG],m,\Delta)$ to $(\K[X],m,\Delta)$,
related to the number of linear extensions (Corollary \ref{cor3.5}) and to a character $\lambda_0$.
After the determination of invertible characters of $\calF[\bfG]$ for the convolution product $\star$ dual to $\delta$,
we prove that both characters $\lambda_0$ and $\lambda_W$ are invertible, which allows to express
$P_{chr_S}$ in terms of $P_{chr_W}$ or $P_0$, with the help of certain characters $\nu_W$ and $\mu_W$
(Proposition \ref{prop3.13}). This allows to give a formula for the leading monomial of $P_{chr_W}(G)$ and $P_{chr_S}(G)$
in Corollary \ref{cor3.14}, with coefficients (in the case of $P_{chr_S}(G)$) related to Murua's coefficients \cite{Murua2006}. \\

In the fifth section, we give an algebraic proof of the result \cite{Beck2012},
which gives a combinatorial interpretation of $P_{chr_S}(G)(-1)$ in terms of acyclic orientations. 
We firstly introduce a surjective double bialgebra morphism $\Theta$ from $\calF[\bfG]$ to the double bialgebra
of acyclic oriented mixed graphs $\calF[\bfG_{aco}]$ in Theorem \ref{theo4.2}. The unicity of the double bialgebra morphism
to $\K[X]$ immediately implies for example that the strong chromatic polynomial of a graph $G$ is the sum of 
the strong chromatic polynomial of all its acyclic orientations (Corollary \ref{cor4.3}). 
Introducing two one-parameter families of characters of $\calF[\bfG]$ in Proposition \ref{prop4.4},
we introduce two new polynomial invariants, which turn out to be Ehrhart polynomials and satisfy a duality principle
on acyclic mixed graphs (Corollary \ref{cor4.7}). Mixing this duality principle with the morphism $\Theta$,
we obtain a new proof that for any mixed graph, $P_{chr_S}(G)(-1)$ is the number of acyclic orientations of $G$,
up to a sign (Corollary \ref{cor4.9}). This allows to give a formula for the antipode of $(\calF[\bfG],m,\Delta)$
involving the number of acyclic orientations of $G$, see Corollary \ref{cor4.10}. \\

The last section is devoted to combinatorial interpretations of special characters of mixed graphs.
We give an algebraic proof of  a result of \cite{Beck2012} in an algebraic way (Proposition \ref{prop5.1} and Corollary \ref{cor5.2}) 
about values of the weak chromatic polynomial on negative integers (for totally mixed graphs only),
and we give a combinatorial interpretation of $\nu_W(G)$ when $G$ is a simple graph or an oriented graph 
(Proposition \ref{prop5.3}), where $\nu_W$ is the inverse of $\lambda_W$ for the convolution product $\star$. \\

The paper ends with an appendix giving formulas for the number of $N$-decorated mixed, oriented or simple graphs with a fixed number of vertices. These numbers are the the dimensions of the homogeneous components of $\calF_V[\bfG]$, when $V$ is finite-dimensional. \\

\textbf{Thanks}. The author thanks the two anonymous referees for their useful comments, which help to improve the redaction of this text.\\

\textbf{Acknowledgments}. 
The author acknowledges support from the grant ANR-20-CE40-0007
\emph{Combinatoire Algébrique, Résurgence, Probabilités Libres et Opérades}.\\

\begin{notation} \begin{enumerate}
\item We denote by $\K$ a commutative field of characteristic zero. Any vector space in this field will be taken over $\K$.
\item For any $n\in \N$, we denote by $[n]$ the set $\{1,\ldots,n\}$. In particular, $[0]=\emptyset$.
\item If $(C,\Delta)$ is a (coassociative but not necessarily counitary) coalgebra, 
we denote by $\Delta^{(n)}$ the $n$-th iterated coproduct of $C$:
$\Delta^{(1)}=\Delta$ and if $n\geq 2$,
\[\Delta^{(n)}=\left(\Delta \otimes \id^{\otimes (n-1)}\right)\circ \Delta^{(n-1)}:C\longrightarrow C^{\otimes (n+1)}.\]
\item If $(B,m,\Delta)$ is a bialgebra of unit $1_B$ and of counit $\varepsilon_B$, let us denote by $B_+=\ker(\varepsilon_B)$ 
its augmentation ideal. We define a coproduct on $B_+$ by
\begin{align*}
&\forall x\in B_+,&\tdelta(x)=\Delta(x)-x\otimes 1_B-1_B\otimes x.
\end{align*}
Then $(B_+,\tdelta)$ is a coassociative (not necessarily counitary) coalgebra. 
\end{enumerate}\end{notation}

\section{Species, twisted bialgebras and contractions}

\subsection{Species and twisted bialgebras}

Recall that a species \cite{Joyal1981,Joyal1986} is a functor from the category of finite sets with bijections, to the category of vector spaces. We shall use the following notations: if $\bfP$ is a species,
\begin{itemize}
\item For any finite set $X$, the vector space associated to $X$ by $\bfP$ is denoted by $\bfP[X]$.
We will shorty write $\bfP[n]$ instead of $\bfP\left[[n]\right]$ for any $n\in \N$.
\item For any bijection $\sigma:X\longrightarrow Y$  between two finite sets, the linear map associated to $\sigma$ by $\bfP$
is denoted by $\bfP[\sigma]:\bfP[X]\longrightarrow \bfP[Y]$. 
\end{itemize}
With these notations, if $\sigma:X\longrightarrow Y$ and $\tau:Y\longrightarrow Z$ are two bijections between finite sets, then $\bfP[\tau\circ \sigma]=\bfP[\tau]\circ \bfP[\sigma]$,
and for any finite set $X$, $\bfP[\id_X]=\id_{\bfP[X]}$.\\

If $\bfP$ and $\bfQ$ are two species, a morphism $f:\bfP\longrightarrow \bfQ$ is a natural transformation from $\bfP$ to $\bfQ$. In other terms, for any finite $X$,
$f_X:\bfP[X]\longrightarrow \bfQ[X]$ is a linear map such that for any bijection $\sigma:X\longrightarrow Y$ between two finite sets, the following diagram commutes:
\[\xymatrix{\bfP[X]\ar[r]^{\bfP[\sigma]} \ar[d]_{f_X}&\bfP[Y]\ar[d]^{f_Y}\\
\bfQ[X]\ar[r]_{\bfQ[\sigma]}&\bfQ[X]}\]

The Cauchy tensor product of species is denoted by $\otimes$: if $\bfP$ and $\bfQ$ are two species, for any finite set $X$,
\[\bfP\otimes \bfQ[X]=\bigoplus_{X=Y\sqcup Z} \bfP[Y]\otimes \bfQ[Z].\]
If $\sigma:X\longrightarrow X'$  is a bijection between two finite sets, then 
\[\bfP\otimes \bfQ[\sigma]=\bigoplus_{X=Y\sqcup Z} \bfP[\sigma_{\mid Y}]\otimes \bfQ[\sigma_{\mid Z}].\]
With this tensor product, the category of species is symmetric monoidal, with the flip defined from $\bfP\otimes \bfQ$ to $\bfQ\otimes \bfP$ by
\[c_{\bfP[X],\bfP[Y]}=
(c_{\bfP\otimes \bfQ})_{\mid \bfP[X]\otimes \bfQ[Y]}=
\left\{\begin{array}{rcl}
\bfP[X]\otimes \bfQ[Y]&\longrightarrow&\bfQ[Y]\otimes \bfP[X]\\
x\otimes y&\longmapsto&y\otimes x. 
\end{array}\right.\]

A twisted algebra (resp. coalgebra, bialgebra) is an algebra (resp. coalgebra, bialgebra) in this symmetric monoidal category. Let us now precise what this means. A twisted algebra $\bfP$ comes with maps $m_{X,Y}:\bfP[X]\otimes \bfP[Y]\longrightarrow \bfP[X\sqcup Y]$ 
 for any finite sets $X$ and $Y$, such that:
\begin{itemize}
\item ($m$ is a species morphism). For any bijections $\sigma:X\longrightarrow X'$ and $\tau:Y\longrightarrow Y'$, where $(X,X')$ and $(Y,Y')$ are pairs of disjoint sets, the
following diagram commutes:
\[\xymatrix{\bfP[X]\otimes \bfP[Y]\ar[rr]^{m_{X,Y}}\ar[d]_{\bfP[\sigma]\otimes \bfP[\tau]}
&&\bfP[X\sqcup Y]\ar[d]^{^\bfP[\sigma\sqcup \tau]}\\
\bfP[X']\otimes \bfP[Y']\ar[rr]_{m_{X',Y'}}&&\bfP[X'\sqcup Y']}\]
where
\[\sigma\sqcup \tau:\left\{\begin{array}{rcl}
X\sqcup Y&\longrightarrow&X'\sqcup Y'\\
x\in X&\longrightarrow&\sigma(x),\\
y\in Y&\longrightarrow&\tau(y).
\end{array}\right.\]
\item (Associativity of $m$). For any disjoint finite sets $X,Y,Z$, the following diagram commutes:
\[\xymatrix{\bfP[X]\otimes \bfP[Y]\otimes \bfP[Z]\ar[d]_{\id_{\bfP[X]}\otimes m_{Y,Z}}
\ar[rr]^{m_{X,Y}\otimes \id_{\bfP[Z]}}&&\bfP[X\sqcup Y]\otimes \bfP[Z]\ar[d]^{m_{X\sqcup Y,Z}}\\
\bfP[X]\otimes \bfP[Y\sqcup Z]\ar[rr]_{m_{X,Y\sqcup Z}}&&\bfP[X\sqcup Y\sqcup Z]}\]
\item There exists an element $1_\bfP\in \bfP[\emptyset]$ such that for any finite set $X$, for any $x\in \bfP[X]$,
\[m_{\emptyset,X}(1_\bfP\otimes x)=m_{X,\emptyset}(x\otimes 1_\bfP)=x.\]
\item (Commutativity of $m$). We shall say that $\bfP$ is commutative if, for any finite disjoint sets $X,Y$, in $\bfP[X\sqcup Y]$,
\begin{align*}
&\forall x\in \bfP[X], \:\forall y\in \bfP[Y],&m_{X,Y}(x\otimes y)=m_{Y,X}(y\otimes x).
\end{align*}
\end{itemize}

A twisted coalgebra $\bfP$ comes with maps  $\Delta_{X,Y}:\bfP[X\sqcup Y]\longrightarrow \bfP[X]\otimes \bfP[Y]$
for any disjoint finite sets $X$ and $Y$, such that:
\begin{itemize}
\item ($\Delta$ is a species morphism). For any bijections $\sigma:X\longrightarrow X'$ and $\tau:Y\longrightarrow Y'$, where $(X,Y)$ and $(X',Y')$ are pairs of disjoint finite sets, 
the following diagram commutes:
\[\xymatrix{\bfP[X\sqcup Y]\ar[rr]^{\Delta_{X,Y}}\ar[d]_{\bfP[\sigma\sqcup \tau]}
&&\bfP[X]\otimes \bfP[Y]\ar[d]^{\bfP[\sigma]\otimes \bfP[\tau]}\\
\bfP[X'\sqcup Y']\ar[rr]_{\Delta_{X',Y'}}&&\bfP[X']\otimes \bfP[Y']}\]
\item (Coassociativity of $\Delta$). For any disjoint finite sets $X,Y,Z$, the following diagram commutes:
\[\xymatrix{\bfP[X\sqcup Y\sqcup Z]\ar[rr]^{\Delta_{X\sqcup Y,Z}} \ar[d]_{\Delta_{X\sqcup Y,Z}}
&&\bfP[X\sqcup Y]\otimes \bfP[Z]\ar[d]^{\Delta_{X,Y}\otimes \id_{\bfP[Z]}}\\
\bfP[X]\otimes \bfP[Y\sqcup Z]\ar[rr]_{\id_{\bfP[X]}\otimes \Delta_{Y,Z}}&&\bfP[X]\otimes \bfP[Y]\otimes \bfP[Z]}\]
\item (Counit). There exists a linear map $\varepsilon_\Delta:\bfP[\emptyset]\longrightarrow \K$ 
such that for any finite set $X$, the following diagram commutes:
\[\xymatrix{\bfP[\emptyset]\otimes \bfP[X]\ar[rrd]_{\varepsilon_\Delta\otimes \id_{\bfP[X]}}
&&\bfP[X] \ar[ll]_{\Delta_{\emptyset,X}}  \ar[rr]^{\Delta_{X,\emptyset}}
\ar[d]_{\id_{\bfP[X]}}&&\bfP[X]\otimes \bfP[\emptyset]\ar[lld]^{\hspace{3mm}\id_{\bfP[X]}\otimes \varepsilon_\Delta}\\
&&\bfP[X]&&}\]
\item We shall say that $\bfP$ is cocommutative if for any disjoint finite sets $X,Y$,
\[\Delta_{Y,X}=c_{\bfP[X],\bfP[Y]}\circ \Delta_{X,Y}.\]
\end{itemize}
A twisted bialgebra is both a twisted coalgebra and a twisted bialgebra, such that the counit and the coproduct are algebra morphisms. In other words, if $X$ is a finite set and $X=I\sqcup J=I'\sqcup J'$, the following diagram commutes: 
\[\xymatrix{\bfP[I']\otimes \bfP[J'] \ar[rrr]^{m_{I',J'}}\ar[d]_{\Delta_{I'\cap I,I'\cap J}\otimes
\Delta_{J'\cap I,J'\cap J}}&&&\bfP[I'\sqcup J']=\bfP[I\sqcup J]\ar[dd]^{\Delta_{I,J}}\\
\bfP[I'\cap I]\otimes \bfP[I'\cap J]\otimes \bfP[J'\cap I]\otimes \bfP[J'\cap J]
\ar[d]_{\id_{\bfP[I'\cap I]}\otimes c_{\bfP[I'\cap J],\bfP[J'\cap I]}\otimes \id_{\bfP[J'\cap J]}}&&&\\
\bfP[I'\cap I]\otimes \bfP[J'\cap I]\otimes \bfP[I'\cap J]\otimes \bfP[J'\cap J]
\ar[rrr]_{\hspace{3cm}m_{I'\cap I,J'\cap I}\otimes m_{I'\cap J,J'\cap J}}&&&\bfP[I]\otimes \bfP[J]
}\]
or equivalently
\begin{align*}
\Delta_{I,J}\circ m_{I,'J'}&=(m_{I'\cap I,J'\cap I}\otimes m_{I'\cap J,J'\cap J})\circ (\id_{\bfP[I\cap I]}
\otimes c_{\bfP[I'\cap J],\bfP[J'\cap I]}\otimes \id_{\bfP[J'\cap J]})\\
&\circ (\Delta_{I'\cap I,I'\cap J}\otimes \Delta_{J'\cap I,J'\cap J}).
\end{align*}
The compatibility between the coproduct and the unit is written as $\Delta_\bfP(1_\bfP)=1_\bfP\otimes 1_\bfP$
and the compatibility between the counit and the product is equivalent to
\begin{align*}
&\forall x,y\in \bfP[\emptyset],&\varepsilon_\Delta(xy)=\varepsilon_\Delta(x)\varepsilon_\Delta(y).
\end{align*}

Let $V$ be a vector space. The $V$-colored Fock functor $\calF_V$, defined in \cite[Definition 3.2]{Foissy41},
 sends any species $\bfP$ to
\begin{align*}
\calF_V[\bfP]&=\bigoplus_{n=0}^\infty \coinv(V^{\otimes n}\otimes \bfP[n])\\
&=\bigoplus_{n=0}^\infty \frac{V^{\otimes n}\otimes \bfP[n]}
{\vect(v_1\ldots v_n \otimes \bfP[\sigma](p)-v_{\sigma(1)}\ldots v_{\sigma(n)}\otimes p\mid
\sigma\in \sym_n,\: p\in \bfP[n],\:v_1,\ldots,v_n\in V)}\\
&=\bigoplus_{n=0}^\infty V^{\otimes n}\otimes_{\sym_n} \bfP[n].
\end{align*}
If $f:\bfP\longrightarrow \bfQ$ is a species morphism, then
\[\calF_V(f):\left\{\begin{array}{rcl}
\calF_V[\bfP]&\longmapsto&\calF_V[\bfQ]\\
\overline{v_1\ldots v_n \otimes p}&\longmapsto&\overline{v_1\ldots v_n \otimes f(p)}.
\end{array}\right.\]
Note that $\calF_V$ is a functor of symmetric monoidal categories, that is to say it sends the Cauchy tensor product of two species to the tensor products of the associated vector spaces,
and that it is compatible with the flip.  Consequently, it sends a twisted algebra, coalgebra or bialgebra to an algebra, coalgebra or bialgebra.

\begin{remark}
When $V=\K$, we obtain the bosonic Fock functor of \cite{Aguiar2010}:
\[\calF[\bfP]=\bigoplus_{n=0}^\infty \coinv(\bfP[n])
=\bigoplus_{n=0}^\infty \frac{\bfP[n]}{\vect(\bfP[\sigma](p)-p\mid \sigma\in \sym_n,\: p\in \bfP[n])}.\]
\end{remark}

\subsection{Contraction-extraction coproducts}

Let us now recall the contraction-extraction coproduct of \cite{Foissy41}. We shall need the following notations:

\begin{notation} 
The species $\com$ is defined by $\com[X]=\K$ for any finite set $X$ and $\com[\sigma]=\id_\K$
for any bijection $\sigma$ between two finite sets. 
 \end{notation}

\begin{notation}
\begin{enumerate}
\item Let $X$ be a finite set. We denote by $\eq[X]$ the set of equivalences on $X$. 
\item For any bijection $\sigma:X\longrightarrow Y$ between two finite sets, for any $\sim\in \eq[X]$,
we define an equivalence  $\sim_\sigma$ on $Y$ by
\begin{align*}
&\forall y,y'\in Y,&y\sim_\sigma y'&\Longleftrightarrow \sigma^{-1}(y)\sim \sigma^{-1}(y').
\end{align*}
This defines a bijection from $\eq[X]$ to $\eq[Y]$. 
Moreover, $\sigma$ induces a bijection between $X/\sim$ and $Y/\sim_\sigma$, sending $C\in X/\sim$
to $\sigma(C)\in Y/\sim_\sigma$.
\item The set $\eq[X]$ is partially ordered by the refinement order: 
\begin{align*}
&\forall \sim,\sim'\in \eq[X],&\sim\leq \sim'&\Longleftrightarrow (\forall x,y\in X,\: x\sim' y\Longrightarrow x\sim y),
\end{align*}
that is to say, $\sim\leq \sim'$ if the classes of $\sim$ are unions of classes of $\sim'$.
\item If $\sim'\in \eq[X]$, there is a natural bijection from $\eq[X/\sim']$ to $\{\sim\in \eq[X],\:\sim\leq \sim'\}$.
From now, we identify $\eq[X/\sim']$ and  $\{\sim\in \eq[X],\:\sim\leq \sim'\}$, as well as
$(X/\sim')/\overline{\sim}$ and $X/\sim$ through these bijections.
\end{enumerate} \end{notation}

Let $(\bfP,m,\Delta)$ be a twisted bialgebra. A contraction-extraction coproduct on $\bfP$ is a family $\delta$ of maps such that 
for any finite set $X$, for any $\sim\in \eq[X]$, $\delta_\sim:\bfP[X]\longrightarrow \bfP[X/\sim]\otimes \bfP[X]$, with the following conditions:
\begin{itemize}
\item (Compatibility with the species structure \cite[Definition 2.2, second item]{Foissy41}).  For any bijection $\sigma:X\longrightarrow Y$ between two finite sets, for any $\sim\in \eq[X]$,
the following diagram commutes:
\[\xymatrix{\bfP[X]\ar[r]^<(.2){\delta_\sim}\ar[d]_{\bfP[\sigma]}&\bfP[X/\sim]\otimes \bfP[X]
\ar[d]^{\bfP[\sigma/\sim]\otimes \bfP[\sigma]}\\
\bfP[Y]\ar[r]_<(.2){\delta_{\sim_\sigma}}&\bfP[Y/\sim_\sigma]\otimes \bfP[Y]}\]
\item (Coassociativity of $\delta$ \cite[Definition 2.2, third item]{Foissy41}). If $X$ is a finite set and $\sim\leqslant \sim'\in \eq[X]$, the following diagram commutes:
\[\xymatrix{\bfP[X]\ar[r]^<(.3){\delta_{\sim'}} \ar[d]_{\delta_\sim}&\bfP[X/\sim']\otimes \bfP[X]\ar[d]^{\delta_\sim\otimes \id}\\
\bfP[X/\sim]\otimes \bfP[X]\ar[r]_<(0.15){\id \otimes \delta_{\sim'}}&\bfP[X/\sim]\otimes \bfP[X/\sim']\otimes \bfP[X]}\]
Moreover, if $\sim,\sim'\in \eq[X]$, such that we do not have $\sim\leq \sim'$, then $(\id \otimes \delta_{\sim'})\circ \delta_\sim=0$.
\item (Counit \cite[Definition 2.2, fourth item]{Foissy41}). There exists a twisted algebra morphism $\epsilon_\delta:\bfP\longrightarrow \com$ such that for any finite set $X$,
for any $\sim\in \eq[X]$,
\[(\id \otimes {\epsilon_\delta}_X)\circ \delta_\sim=\begin{cases}
\id_{\bfP[X]}\mbox{ if $\sim$ is the equality of $X$},\\
0\mbox{ otherwise},
\end{cases}\]
and
\[\sum_{\sim \in \eq[X]} ({\epsilon_\delta}_{X/\sim}\otimes \id)\circ \delta_\sim=\id_{\bfP[X]}.\]
\item  (Compatibility with the algebraic structure \cite[Proposition 2.4]{Foissy41}). For any finite set $X$, for any $\sim\in \eq[X]$, putting $\sim_X=\sim\cap X^2$ and $\sim_Y=\sim\cap Y^2$,
\begin{align*}
\delta_\sim\circ m_{X,Y}&=\begin{cases}
(m_{X/\sim_X,Y/\sim_Y}\otimes m_{X,Y})\circ (\id \otimes c \otimes \id)\circ (\delta_{\sim_X}\otimes \delta_{\sim_Y})
\mbox{ if }\sim=\sim_X\sqcup \sim_Y,\\
0\mbox{ otherwise},
\end{cases}
\end{align*}
Moreover, $\delta_{\sim_\emptyset}(1_\bfP)=1_\bfP\otimes 1_\bfP$, where $\sim_\emptyset$ is the unique equivalence on $\emptyset$.
\item  (Compatibility with the coalgebraic structure \cite[Proposition 2.4]{Foissy41}). For any finite set $X\sqcup Y$, for any $\sim\in \eq[X\sqcup Y]$, putting $\sim_X=\sim\cap X^2$ 
and $\sim_Y=\sim\cap Y^2$,
\begin{align*}
(\Delta_{X/\sim_X,Y/\sim_Y}\otimes \id)\circ \delta_\sim&=
m_{1,3,24}\circ (\delta_{\sim_X}\otimes \delta_{\sim_Y})\circ \Delta_{X,Y}\mbox{ if }\sim=\sim_X\sqcup \sim_Y.
\end{align*}
Moreover, for any $x\in \bfP[\emptyset]$, 
\[(\varepsilon_\Delta\otimes \id)\circ \delta_{\sim_\emptyset}(x)=\varepsilon_\Delta(x)1_\bfP.\]
\end{itemize}

The following result is proved in \cite[Proposition 3.7]{Foissy41}:

\begin{prop}\label{prop1.1}
Let $(V,\cdot,\delta_V)$ be a counitary, not necessarily unitary bialgebra, and let $(\bfP,m,\Delta)$ be a twisted bialgebra with a contraction-extraction coproduct $\delta$. 
Then $\calF_V[\bfP]$ is a double bialgebra, with the product and the two coproducts defined as follows:
\begin{align*}
m(\overline{v_1\ldots v_m \otimes p}\otimes \overline{w_1\ldots w_n \otimes q})
&=\overline{\bfP[\sigma_{m,n}](m(p\otimes q))\otimes v_1\ldots v_mw_1\ldots w_n},\\
\Delta(\overline{v_1\ldots v_n\otimes p})
&=\sum_{I\subseteq [n]} 
\left(\overline{\prod_{i\in I} v_i\otimes \bfP[\sigma_I]\left(p^{(1)}_I\right)}\right)
\otimes \left(\overline{\prod_{i\notin I} v_i\otimes \bfP[\sigma_{[n]\setminus I}]\left(p^{(2)}_I\right)}\right),\\
\delta(\overline{v_1\ldots v_n\otimes p})&=\sum_{\sim \in \eq[n]}
\overline{\left(\prod_{\mbox{\scriptsize$C$  class of $\sim$}} \prod_{i\in C}^\cdot v'_i\right)\otimes p'_\sim}
\otimes 
\overline{v''_1\ldots v''_n\otimes p''_\sim},
\end{align*}
where $\sigma_{m,n}:[m]\sqcup [n]\longrightarrow [m+n]$ is the bijection sending the elements of $[m]$ to themselves and any element $k\in [n]$ to $k+m$, $\sigma_I:I\longrightarrow [|I|]$ is the unique increasing bijection (where $I$ is any subset of $[n]$), and, with Sweedler's notations 
\begin{align*}
\Delta_{I,[n]\setminus I}(p)&=p^{(1)}_I\otimes p^{(2)}_I,&\delta_\sim(p)&=p'_\sim \otimes p''_\sim,&\mbox{and}&&\delta_V(v)&=v'\otimes v''. 
\end{align*} \end{prop}

\section{The species of mixed graphs}

\subsection{Mixed graphs}

\begin{defi} \begin{enumerate}
\item A \emph{mixed graph} is a triple $G=(V(G),E(G),A(G))$ where:
\begin{enumerate}
\item $V(G)$ is a finite set, called the set of vertices of $G$,
\item $E(G)$ is a subset of $\{\{x,y\}\subseteq V(G),\: x\neq y\}$, called the set of edges of $G$,
\item $A(G)$ is a subset of $\{(x,y)\in V(G)^2,\:x\neq y\}$, called the set of arcs of $G$,
\end{enumerate}
such that, for any $x,y\in V(G)$, with $x\neq y$,
\begin{align*}
\{x,y\}\in E(G)&\Longrightarrow (x,y)\notin A(G)\mbox{ and } (y,x)\notin A(G),\\
(x,y)\in A(G)&\Longrightarrow \{x,y\}\notin E(G).
\end{align*}
\item For any finite set $X$, we denote by $\calG[X]$ the set of mixed graphs $G$ such that $V(G)=X$. 
The vector space generated by $\calG[X]$ is denoted by $\bfG[X]$.
This defines a species $\bfG$ of mixed graphs. 
\item A mixed graph is an \emph{oriented graph} if $E(G)=\emptyset$: this defines a subset of $\calG[X]$
denoted by $\calG_o[X]$ for any finite set $X$, and a subspecies of $\bfG$ denoted by $\bfG_o$.
\item A mixed graph $G$ is a \emph{simple graph} if $A(G)=\emptyset$: this defines a subset of $\calG[X]$
denoted by $\calG_s[X]$ for any finite set $X$, and a subspecies of $\bfG$ denoted by $\bfG_s$. 
\end{enumerate}\end{defi}

\begin{example} \label{ex1.1}
There are five elements in $\calG[2]$, which we graphically represent on the right:
\begin{align*}
G_1&=([2],\emptyset,\emptyset),&\xymatrix{\rond{1}&\rond{2}}\\
G_2&=([2],\{\{1,2\}\},\emptyset),&\xymatrix{\rond{1}\ar@{-}[r]&\rond{2}}\\
G_3&=([2],\emptyset,\{(1,2)\}),&\xymatrix{\rond{1}\ar[r]&\rond{2}}\\
G_4&=([2],\emptyset,\{(2,1)\}),&\xymatrix{\rond{1}&\rond{2}\ar[l]}\\
G_5&=([2],\emptyset,\{(1,2),(2,1)\}),&\xymatrix{\rond{1}\ar@/_/[r]&\rond{2}\ar@/_/[l]}
\end{align*}
Moreover,
\begin{align*}
\calG_o[2]&=\{G_1,G_3,G_4,G_5\},& \calG_s[2]&=\{G_1,G_2\}. 
\end{align*}
\end{example}

\begin{remark}
In a mixed graph $G$, for any pair of vertices $\{x,y\}$ of $G$, there are five possibilities to define
edges or arcs between $x$ and $y$. Hence, if $X$ is of cardinality $n$, then 
\[|\calG[X]|=5^{\frac{n(n-1)}{2}}.\]
Similarly,
\begin{align*}
|\calG_o[X]|&=4^{\frac{n(n-1)}{2}},&|\calG_s[X]|&=2^{\frac{n(n-1)}{2}}.
\end{align*}
These gives respectively  sequences \href{https://oeis.org/A109345}{A109345},
\href{https://oeis.org/A053763}{A053763} and
\href{https://oeis.org/A006125}{A006125} of the OEIS \cite{Sloane}. 
\end{remark}

\begin{notation}
Let $G$ be a mixed graph and $x,y\in V(G)$. We shall write $x\arc{G} y$ if $(x,y)\in A(G)$
and $x \arete{G} y$ if $\{x,y\}\in E(G)$. 
\end{notation}

\begin{defi}
Let $G$ be a mixed graph. 
\begin{enumerate}
\item An oriented path in $G$ is a finite sequence $P=(x_0,\ldots,x_n)$ of vertices of $G$ such that for any $i\in \{0,\ldots,n-1\}$,
$x_i\arc{G} x_{i+1}$. The vertices $x_0$ and $x_n$ are respectively the beginning and the end of $P$.
\item A mixed path in $G$ is a finite sequence $P=(x_0,\ldots,x_n)$ of vertices of $G$ such that for any $i\in \{0,\ldots,n-1\}$,
$x_i\arc{G} x_{i+1}$ or $x_i \arete{G} x_{i+1}$. The vertices $x_0$ and $x_n$ are the extremities of $P$.
\item A path in $G$ is a finite sequence $P=(x_0,\ldots,x_n)$ of vertices of $G$ such that for any $i\in \{0,\ldots,n-1\}$,
$x_i \arc{G} x_{i+1}$ or $x_{i+1}\arc{G} x_i$ or $x_i\arete{G} x_{i+1}$. 
The vertices $x_0$ and $x_n$ are the extremities of $P$.
The mixed graph $G$ is connected if for any vertices $x,y\in V(G)$, there exists a path with extremities $x$ and $y$. 
\end{enumerate}
\end{defi}

\begin{example}
Let us consider the mixed graphs of Example \ref{ex1.1}. The connected ones are $G_2$, $G_3$, $G_4$ and $G_5$. 
\end{example}

\subsection{The twisted bialgebra of mixed graphs}

We now define a product and a coproduct on the species $\bfG$.
Let $X,Y$ be two finite sets, $G\in \calG[X]$ and $H\in \calG[Y]$. The mixed graph $GH\in \calG[X\sqcup Y]$, disjoint union of $G$ and $H$, is defined by
\begin{align*}
V(GH)&=V(G)\sqcup V(H),&
E(GH)&=E(G)\sqcup E(H),&
A(GH)&=A(G)\sqcup A(H).
\end{align*}
This product is bilinearly extended to $\bfG$. it is clearly associative, commutative and has a unit, which is the empty graph $1\in \calG[\emptyset]$. 
With this product, $\bfG$ is a commutative twisted algebra. Note that $\bfG_s$ and $\bfG_o$ are subalgebras of $\bfG$. 

\begin{defi}
Let $G$ be a mixed graph and let $I\subseteq V(G)$. 
\begin{enumerate}
\item The mixed graph $G_{\mid I}$ is defined by
\begin{align*}
V(G_{\mid I})&=I,\\
E(G_{\mid I})&=\{\{x,y\}\in E(G)\mid x,y\in I\},\\
A(G_{\mid I})&=\{(x,y)\in A(G)\mid x,y\in I\}.
\end{align*}
\item We shall say that $I$ is an ideal of $G$ if 
\begin{align*}
&\forall x,y\in V(G),&(x\in I \mbox{ and }x\arc{G} y)\Longrightarrow (y\in I).
\end{align*}
\end{enumerate}
\end{defi}

\begin{prop}\label{prop1.4}
We define a coproduct $\Delta$ on the species $\bfG[X]$ in the following way:
for any finite sets $X$ and $Y$, for any mixed graph $G\in \calG[X\sqcup Y]$,
\[\Delta_{X,Y}(G)=\begin{cases}
G_{\mid X}\otimes G_{\mid Y}\mbox{ if $Y$ is an ideal of $G$},\\
0\mbox{ otherwise}.
\end{cases}\]
Together with the product defined earlier, this coproduct makes $\bfG$ a twisted bialgebra.
\end{prop}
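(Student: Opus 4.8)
The plan is to verify the three things that together constitute a twisted bialgebra structure with the product already defined: that $\Delta$ is a well-defined morphism of species (natural in $X\sqcup Y$), that $\Delta$ is coassociative, and that $\Delta$ is an algebra morphism with respect to the disjoint union product (the compatibility axiom). Naturality is essentially bookkeeping: a bijection $\sigma$ relabels vertices, transports edges and arcs, and sends ideals to ideals, so the two cases in the definition of $\Delta_{X,Y}$ are preserved; I would dispatch this quickly. The substantive content is coassociativity and multiplicativity, and both hinge on understanding how the notion of \emph{ideal} interacts with nesting and with disjoint union.

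\textbf{Coassociativity.} Let me first record the iterated coproduct combinatorially. Applying $\Delta$ twice to $G\in\calG[X\sqcup Y\sqcup Z]$ and chasing both bracketings, I expect each to produce $G_{\mid X}\otimes G_{\mid Y}\otimes G_{\mid Z}$ exactly when a suitable nested-ideal condition holds, and $0$ otherwise. Concretely, $(\Delta\otimes\id)\circ\Delta$ is nonzero iff $Y\sqcup Z$ is an ideal of $G$ \emph{and} $Z$ is an ideal of $G_{\mid Y\sqcup Z}$, while $(\id\otimes\Delta)\circ\Delta$ is nonzero iff $Z$ is an ideal of $G$ \emph{and} $Y\sqcup Z$ is\ldots wait, the right side asks that $Z$ be an ideal of $G$ and then that $Z$ be an ideal of $G_{\mid Y\sqcup Z}$ after splitting $X\sqcup Y$ from $Z$. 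The key lemma I would isolate is: for a partition $X\sqcup Y\sqcup Z=V(G)$, the condition ``$Y\sqcup Z$ is an ideal of $G$ and $Z$ is an ideal of $G_{\mid Y\sqcup Z}$'' is equivalent to ``$Z$ is an ideal of $G$ and $Y\sqcup Z$ is an ideal of $G$.'' Both reduce, by the defining implication $x\in I,\ x\arc{G}y\Rightarrow y\in I$, to the single symmetric statement that \emph{no arc crosses from $X$ into $Y\cup Z$ with tail outside head's block, and no arc crosses from $Y$ into $Z$}—more precisely that all arcs respect the ordering of blocks $X<Y<Z$ in the sense that an arc with tail in a block can only point into that block or a later one. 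Restriction manifestly commutes with itself ($(G_{\mid Y\sqcup Z})_{\mid Z}=G_{\mid Z}$ etc.), so once the support conditions match, the two tensor expressions coincide termwise.

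\textbf{Multiplicativity.} Here I must check $\Delta(GH)=\Delta(G)\Delta(H)$ for $G\in\calG[X_1]$, $H\in\calG[X_2]$, where the product on the right is taken in $\bfG\otimes\bfG$. The crucial point is that in the disjoint union $GH$ there are \emph{no} edges or arcs between $V(G)$ and $V(H)$. Therefore a subset $I\subseteq V(GH)$ is an ideal of $GH$ if and only if $I\cap V(G)$ is an ideal of $G$ and $I\cap V(H)$ is an ideal of $H$: the defining implication never involves a vertex of $G$ and a vertex of $H$ simultaneously, since no arc joins the two parts. Given this, the sum defining $\Delta_{X,Y}(GH)$ over decompositions of $X_1\sqcup X_2$ factors as a product of the corresponding sums for $G$ and for $H$, because restriction also splits: $(GH)_{\mid I}=G_{\mid I\cap V(G)}\,H_{\mid I\cap V(H)}$. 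Matching this against the definition of the product coproduct on $\bfG\otimes\bfG$ gives the identity. I would also note the trivial compatibility with the unit $1\in\calG[\emptyset]$, which is immediate since $\emptyset$ is an ideal of every graph and the empty graph has a unique decomposition.

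\textbf{The main obstacle} is the ideal-nesting lemma underlying coassociativity: one must set up the bookkeeping so that ``$Y\sqcup Z$ ideal of $G$, then $Z$ ideal of $G_{\mid Y\sqcup Z}$'' and ``$Z$ ideal of $G$, then $Y\sqcup Z$ ideal of $G$'' really describe the same family of arc-crossing constraints. The potential pitfall is that the first composite only \emph{restricts} before testing the second ideal condition, so one could worry that an arc from $X$ into $Z$ is ignored in $G_{\mid Y\sqcup Z}$; the resolution is that in the first bracketing the condition ``$Y\sqcup Z$ ideal of $G$'' already forbids arcs from $X$ pointing into $Y\sqcup Z$ would be fine, but arcs \emph{out of} $Y\sqcup Z$ into $X$ are what is controlled—so I must be careful about the direction of the implication. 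Once the direction of the ideal condition (tail in $I$ forces head in $I$) is tracked consistently, both sides reduce to: every arc of $G$ with tail in $Y\sqcup Z$ has head in $Y\sqcup Z$, and every arc with tail in $Z$ has head in $Z$, which is symmetric in the two computations, and the proof closes.
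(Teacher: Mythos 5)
Your proposal is correct and follows essentially the same route as the paper: the same symmetric reformulation of the nested-ideal condition (no arc with tail in $Y$ or $Z$ pointing into an earlier block) for coassociativity, and the same observation that ideals of $GH$ split as $I\sqcup J$ because no arc crosses between the factors. The only item you omit is the counit axiom for $\Delta$, which is immediate since $\emptyset$ and $V(G)$ are ideals of every mixed graph, so $\Delta_{V(G),\emptyset}(G)=G\otimes 1$ and $\Delta_{\emptyset,V(G)}(G)=1\otimes G$; also note that your labelling of which bracketing produces which nested-ideal condition is swapped relative to the direct computation, though this is harmless since the two conditions must be shown equivalent in any case.
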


\begin{proof}
Let us first prove the coassociativity of $\Delta$. Let $X$, $Y$ and $Z$ be disjoint  finite sets and $G\in \calG[X\sqcup Y\sqcup Z]$. 
Then
\begin{align*}
(\Delta_{X,Y}\otimes \id)\circ \Delta_{X\sqcup Y,Z}(G)&=\begin{cases}
G_{\mid X}\otimes G_{\mid Y}\otimes G_{\mid Z}\\
\hspace{1cm}\mbox{ if $Z$ is an ideal of $G$ and $Y$ is an ideal of $G_{\mid X\sqcup Y}$},\\
0\mbox{ otherwise}. 
\end{cases}\\
(\id\otimes \Delta_{Y,Z})\circ \Delta_{X,Y\sqcup Z}(G)&=\begin{cases}
G_{\mid X}\otimes G_{\mid Y}\otimes G_{\mid Z}\\
\hspace{1cm}\mbox{ if $Y\sqcup Z$ is an ideal of $G$ and $Z$ is an ideal of $G_{\mid Y\sqcup Z}$},\\
0\mbox{ otherwise}. 
\end{cases}
\end{align*}
Moreover,
\begin{align*}
&\mbox{$Z$ is an ideal of $G$ and $Y$ is an ideal of $G_{\mid X\sqcup Y}$}\\
&\Longleftrightarrow \left(\forall (x,y)\in V(G)^2,\: x\arc{G} y\Longrightarrow (x,y)\notin (Y\times X) 
\sqcup (Z\times X)\sqcup (Z\times Y)\right)\\
&\Longleftrightarrow \mbox{$Y\sqcup Z$ is an ideal of $G$ and $Z$ is an ideal of $G_{\mid Y\sqcup Z}$},
\end{align*}
so $(\Delta_{X,Y}\otimes \id)\circ \Delta_{X\sqcup Y,Z}=(\id\otimes \Delta_{Y,Z})\circ \Delta_{X,Y\sqcup Z}$.
As for any graph $G$, $\emptyset$ and $V(G)$ are ideals of $G$,
\begin{align*}
\Delta_{V(G),\emptyset}(G)&=G\otimes 1,&\Delta_{\emptyset,V(G)}(G)&=1\otimes G.
\end{align*}
So $\Delta$ is counitary, and the counit is given by
\[\varepsilon_\Delta:\left\{\begin{array}{rcl}
\bfG[\emptyset]&\longrightarrow&\K\\
1\mbox{ (empty graph)}&\longrightarrow&1.
\end{array}\right.\]

Let $G\in \calG[X]$, $H\in \calG[Y]$ and $X'$, $Y'$ be sets such that $X\sqcup Y=X'\sqcup Y'$.
In $GH$, there is no arc between any element of $X$ and any element of $Y$, nor between any element of $Y$ and any element of $X$.
Hence, the ideals of $GH$ are of the form $I\sqcup J$, where $I$ is an ideal of $G$ and $J$ is an ideal of $H$.
Therefore,
\begin{align*}
\Delta_{X',Y'}(GH)&=\begin{cases}
G_{\mid X\cap X'}H_{\mid Y\cap X'}\otimes G_{\mid X\cap Y'}H_{\mid Y\cap Y'}\\
\hspace{1cm}\mbox{ if $Y'\cap X$ is an ideal of $G$ and $Y'\cap Y$ is an ideal of $H$},\\
0\mbox{ otherwise}
\end{cases}\\
&=\Delta_{X\cap X',X\cap Y'}(G)\Delta_{Y\cap X',Y\cap Y'}(H). 
\end{align*}
Moreover, $\Delta_{\emptyset,\emptyset}(1)=1\otimes 1$. So $\Delta$ is a morphism of twisted algebras. \end{proof}

\begin{example} With the notations of Example \ref{ex1.1},
\[\begin{array}{|c|c|c|c|c|c|}
\hline &\Delta_{\{1,2\},\emptyset}&\Delta_{\{1\},\{2\}}&\Delta_{\{2\},\{1\}}&\Delta_{\emptyset,\{1,2\}}\\
\hline G_1&G_1\otimes 1&\xymatrix{\rond{1}}\otimes \xymatrix{\rond{2}}&
\xymatrix{\rond{2}}\otimes \xymatrix{\rond{1}}&1\otimes G_1\\
\hline G_2&G_2\otimes 1&\xymatrix{\rond{1}}\otimes \xymatrix{\rond{2}}&
\xymatrix{\rond{2}}\otimes \xymatrix{\rond{1}}&1\otimes G_2\\
\hline G_3&G_3\otimes 1&\xymatrix{\rond{1}}\otimes \xymatrix{\rond{2}}&
0&1\otimes G_3\\
\hline G_4&G_4\otimes 1&0&
\xymatrix{\rond{2}}\otimes \xymatrix{\rond{1}}&1\otimes G_4\\
\hline G_5&G_5\otimes 1&0&0&1\otimes G_5\\
\hline \end{array}\]
\end{example}

\subsection{Contraction-extraction on mixed graphs}

\begin{defi}
Let $G\in \calG[X]$ and $\sim\in \eq[X]$. 
\begin{enumerate}
\item We define a mixed graph $G\mid\sim\in \calG[X]$ by
\begin{align*}
V(G\mid\sim)&=V(G),\\
E(G\mid\sim)&=\{\{x,y\}\in E(G)\mid\: x\sim y\},\\
A(G\mid \sim)&=\{(x,y)\in A(G)\mid\: x\sim y\}.
\end{align*}
In other words, $G\mid \sim$ is obtained from $G$ by deleting all the edges or arcs whose extremities are not equivalent;
or equivalently, $G\mid\sim$ is the disjoint union of the restrictions of $G$ to the equivalence classes of $\sim$.
\item We define a mixed graph $G/\sim\in \calG[X/\sim]$ by
\begin{align*}
V(G/\sim)&=X/\sim,\\
E(G/\sim)&=\{\{\cl_\sim(x),\cl_\sim(y)\}\mid\: \{x,y\}\in E(G), (x,y)\notin A(G), \:(y,x)\notin A(G),\: \cl_\sim(x)\neq \cl_\sim(y)\},\\
A(G/\sim)&=\{(\cl_\sim(x),\cl_\sim(y))\mid \: (x,y)\in A(G), \: \cl_\sim(x)\neq \cl_\sim(y)\}.
\end{align*}
In other words, $G/\sim$ is obtained from $G$ by identifying the vertices according to $\sim$,
then deleting the loops created in the process and the redundant edges, giving priority to the oriented ones.
\item We shall say that $\sim\in \eq^c[G]$ if for any equivalence class $C$ of $\sim$, $G_{\mid C}$ is connected.
\end{enumerate}
\end{defi}

\begin{prop} \label{prop1.6}
We define a contraction-extraction coproduct $\delta$ on $\bfG$ as follows: for any finite set $X$,
for any $\sim\in \eq[X]$, for any $G\in \calG[X]$,
\[\delta_\sim(G)=\begin{cases}
G/\sim\otimes G\mid \sim \mbox{ if }\sim\in \eq^c[G],\\
0\mbox{ otherwise}.
\end{cases}\]
\end{prop}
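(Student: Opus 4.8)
The plan is to verify that $\delta$ defines a genuine contraction-extraction coproduct in the formalism of \cite{Foissy41}, and then to check the two compatibilities (with the product $m$ and with the coproduct $\Delta$) asserted in \cite[Proposition 2.5]{Foissy41}. Concretely, this amounts to three things: coassociativity of $\delta$ (as a contraction-extraction coproduct, indexed by equivalence relations and their refinements/quotients), the fact that $\delta$ is an algebra morphism for the disjoint-union product, and the mixed compatibility relating $\delta$ to $\Delta$.

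**First I would** set up the coassociativity. In the contraction-extraction formalism, coassociativity is indexed by pairs $\sim_1\leq\sim_2$ of equivalence relations on $X$ (with $\sim_1$ refining $\sim_2$), and one must compare the two ways of applying $\delta$ iteratively. The key computation is that for $G\in\calG[X]$ and $\sim\,\in\eq[X]$, one has the ``tower'' identities
\[
(G/\sim_1)/\overline{\sim_2}=G/\sim_2,\qquad (G/\sim_1)\mid\overline{\sim_2}=(G\mid\sim_2)/\sim_1,\qquad (G\mid\sim_1)=(G\mid\sim_2)\mid\sim_1,
\]
where $\overline{\sim_2}$ is the relation induced by $\sim_2$ on $X/\sim_1$. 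The first of these is the statement that contracting in two stages (priority to oriented edges, loops and redundancies deleted) gives the same result as contracting in one stage; here the main point is to check that the priority rule (oriented over unoriented) and the loop/redundancy deletions behave transitively, which is a finite case check on each pair of vertices. I would also verify that the connectedness condition matches up: $\sim_1\in\eq^c[G]$ and $\overline{\sim_2}\in\eq^c[G/\sim_1]$ together are equivalent to $\sim_2\in\eq^c[G]$ and $\sim_1\in\eq^c[G\mid\sim_2]$, so that the supports of the two iterated coproducts coincide.

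**Next I would** treat the multiplicativity of $\delta$ for the disjoint union. For $G\in\calG[X]$, $H\in\calG[Y]$ and $\sim\,\in\eq[X\sqcup Y]$, since $GH$ has no edge or arc between $X$ and $Y$, the graph $GH\mid\sim$ is connected on a class $C$ only if $C$ lies entirely in $X$ or entirely in $Y$; hence $\sim\,\in\eq^c[GH]$ forces $\sim$ to be the join of an equivalence $\sim_X\in\eq^c[G]$ and $\sim_Y\in\eq^c[H]$ with no class straddling $X$ and $Y$, and then $\delta_\sim(GH)=\delta_{\sim_X}(G)\,\delta_{\sim_Y}(H)$ with the contractions and restrictions factoring as disjoint unions. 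This is a direct unwinding of the definitions. The compatibility with $\Delta$ is the genuinely structural point: following \cite[Proposition 2.5]{Foissy41} one must show that contraction and ideal-separation commute appropriately, i.e. that if $I$ is an ideal of $G$ then the classes of a connected $\sim$ interact with $I$ so that $\cl_\sim(I)$ is an ideal of $G/\sim$ and the two coproducts can be interchanged; the crux is that an oriented arc $x\arc{G}y$ survives contraction, so the ideal condition (closure under outgoing arcs) is preserved under passing to $G/\sim$.

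**The hard part will be** the coassociativity bookkeeping, specifically the transitivity of the contraction rule $G/\sim$ under the priority convention: when several parallel arcs and edges between two classes are collapsed, one must confirm that deleting redundant edges ``giving priority to the oriented ones'' is associative under refinement, so that no edge spuriously survives or disappears depending on the order of contraction. Once these pairwise-vertex checks are in place, everything else is a routine verification against the axioms of \cite[Proposition 2.5]{Foissy41}, which I would invoke directly rather than re-deriving.
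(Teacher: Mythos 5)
Your plan follows the same route as the paper for the three items you list: the coassociativity is handled exactly as in the paper via the tower identities for iterated contraction/restriction together with the matching of the connectedness conditions (and the vanishing of both sides when one relation does not refine the other), the product compatibility via the observation that a connected class of $GH$ cannot straddle $V(G)$ and $V(H)$, and the $\Delta$-compatibility via the equivalence between ``$\sim\in\eq^c[G]$ and $Y/\sim_Y$ is an ideal of $G/\sim$'' and ``$Y$ is an ideal of $G$, $\sim_X\in\eq^c[G_{\mid X}]$ and $\sim_Y\in\eq^c[G_{\mid Y}]$''. (For the last point, note that the paper proves both implications; your sketch only argues one direction, and the converse --- that an arc of $G/\sim$ leaving $Y/\sim_Y$ would come from an arc of $G$ leaving $Y$ --- is needed so that the two sides vanish in exactly the same cases.)

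There is, however, one missing item: being a contraction-extraction coproduct in the sense of \cite[Definition 2.2]{Foissy41} also requires a counit, and your list of ``three things'' omits this axiom entirely, whereas the paper devotes a full paragraph to it. One must define $\epsilon_\delta[X](G)=1$ if $E(G)=A(G)=\emptyset$ and $0$ otherwise, and then verify two identities. The right-hand one is easy (only the equality relation contributes, since any coarser $\sim\in\eq^c[G]$ has a class containing an edge or an arc). The left-hand one, $\sum_{\sim\in\eq[X]}(\epsilon_\delta[X/\sim]\otimes\id)\circ\delta_\sim(G)=G$, requires an actual argument: one must show that the unique $\sim\in\eq^c[G]$ with $E(G/\sim)=A(G/\sim)=\emptyset$ is the relation $\sim_c$ whose classes are the connected components of $G$ (every edge or arc must lie inside a class, so each connected component sits in a single class, and connectedness of the classes forces equality), and that for this relation $G\mid\sim_c=G$. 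This is routine but it is a step of the proof, not a consequence of the other three verifications, so your write-up should include it. A final minor remark: you flag the transitivity of the priority rule (oriented arcs over edges) under two-stage contraction as the hard part; this does need the pairwise check you describe, but the paper's real bookkeeping effort goes into matching the $\eq^c$ supports, which you should not underestimate.
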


\begin{proof}
The compatibility of $\delta$ with the species structure is clear. Let us prove the coassociativity of $\delta$. 
Let $X$ be a finite set, $\sim,\sim'\in \eq[X]$ and $G\in \calG[X]$.\\

If $\sim\leq \sim'$, let us prove that $\sim \in \eq^c[G/\sim']$ and $\sim'\in \eq^c[G]$ if, and only if,
$\sim'\in \eq^c[G\mid \sim]$ and $\sim \in \eq^c[G]$.

$\Longrightarrow$. Let $C'$ be a class of $\sim'$. As $\sim'\in \eq^c[G]$, it is a connected subgraph of $G$.
Moreover, as $\sim\leq \sim'$, all its elements are in the same class of $\sim$, so $G_{\mid C'}=(G|\sim)_{\mid C'}$:
as a consequence, $(G|\sim)_{\mid C'}$ is connected, so $\sim'\in \eq^c[G\mid \sim]$. 
Let $C$ be a class of $\sim$, and $x,y\in C$. As $\sim \in \eq^c[G/\sim']$, it is connected in $G/\sim'$: 
there exists a path in $G/\sim'$ from $\cl_{\sim'}(x)$ to $\cl_{\sim'}(y)$. Moreover, as $\sim'\in \eq^c[G]$,
each $\cl_{\sim'}(z)$ is a connected subgraph of $G$, so there is a path from $x$ to $y$ in $G$: $\sim \in \eq^c[G]$.

$\Longleftarrow$. Let $C$ be a class of $\sim$. As $\sim\in \eq^c[G]$, any of its class is a connected subgraph of $G$,
so by contraction is a connected subgraph of $G/\sim'$: $\sim \in \eq^c[G/\sim']$. Let $C'$ be a class of $\sim'$.
As $\sim'\in \eq^c[G\mid \sim]$, it is a connected subgraph of $G\mid \sim$, so also of $G$: $\sim \in \eq^c[G/\sim']$.\\

As a conclusion,
\begin{align*}
(\delta_\sim\otimes \id)\circ \delta_{\sim'}(G)&=\begin{cases}
(G/\sim')/\sim\otimes (G/\sim')\mid \sim\otimes G\mid \sim'\mbox{ if $\sim \in \eq^c[G/\sim']$ and $\sim'\in \eq^c[G]$},\\
0\mbox{ otherwise}
\end{cases}\\
&=\begin{cases}
G/\sim\otimes (G\mid \sim)/\sim'\otimes (G\mid \sim)\mid \sim'
\mbox{ if $\sim'\in \eq^c[G\mid \sim]$ and $\sim \in \eq^c[G]$},\\
0\mbox{ otherwise}
\end{cases}\\
&=(\id \otimes \delta_{\sim'})\circ \delta_\sim(G).
\end{align*}

If we do not  have $\sim\leq \sim'$, then at least one class $C$ of $\sim$ intersects two classes of $\sim'$,
so intersects two connected components of $\mid \sim'$: we obtain that $\sim\notin \eq^c[G\mid \sim']$. So
$\delta_{\sim}(G\mid \sim')=0$ and finally $(\id \otimes \delta_\sim)\circ \delta_{\sim'}(G)=0$.\\

Let us now study the counit. 
We define a species morphism $\epsilon_\delta:\bfG\longrightarrow\com$ as follows: if $G\in \calG[X]$, 
\[\epsilon_\delta[X](G)=\begin{cases}
1\mbox{ if }E(G)=A(G)=\emptyset,\\
0\mbox{ otherwise.}
\end{cases}\]
Let $G\in \calG[X]$ and $\sim \in \eq[X]$. If $\sim$ is the equality of $X$,
then $\sim\in \eq^c[G]$, $G/\sim=G$ and $G\mid \sim$ as no edge, so $(\id \otimes \epsilon_\delta[X])(G)=G$.
Otherwise, either $G\notin \eq^c[G]$ or at least one class of $\sim$ contains an edge or an arc, so 
$\epsilon_\delta[X](G\mid \sim)=0$. In both cases, $(\id \otimes \epsilon_\delta[X])(G)=0$.

Let $\sim\in \eq^c[G]$, such that $E(G/\sim)=E(G\mid \sim)=\emptyset$. If two vertices of $G$ are related by an edge of an arc,
there are necessarily equivalent, so any connected component of $G$ is included in a single class of $\sim$.
As the classes of $\sim$ are connected, $\sim$ is the relation $\sim_c$ whose classes are the connected components of $G$.
Moreover, $G/\sim_c$ has no edge nor arc, and $G\mid \sim_c=G$. Therefore,
\begin{align*}
\sum_{\sim\in \eq[X]}(\epsilon_\delta[X/\sim]\otimes \id)\circ \delta_\sim(G)
&=\sum_{\sim\in \eq^c[G]}(\epsilon_\delta[X/\sim]\otimes \id)\circ \delta_\sim(G)\\
&=(\epsilon_\delta[X/\sim]\otimes \id)\circ \delta_{\sim_c}(G)\\
&=G\mid \sim_c\\
&=G.
\end{align*}

Let us prove the compatibility of $\delta$ with the algebraic structure. Obviously, $\delta_{\sim_\emptyset}(1)=1\otimes 1$.  
Let $X$ and $Y$ be two finite sets, $\sim\in \eq[X\sqcup Y]$,
$G\in \calG[X]$ and $H\in \calG[Y]$. If $\sim\neq \sim_X\sqcup \sim_Y$, at least one class $C$ of $\sim$ intersects
both $X$ and $Y$, so is not connected in $GH=m_{X,Y}(G\otimes H)$. Therefore, $\sim\notin \eq^c[GH]$ and
\[\delta_\sim\circ m_{X,Y}(G\otimes H)=0.\]
Let us assume that  $\sim=\sim_X\sqcup \sim_Y$. Then $\sim\in \eq^c[GH]$ if, and only if, $\sim_X\in \eq^c[G]$
and $\sim_Y \in \eq^c[H]$, as the connected components of $GH$ are the connected components of $G$ and of $H$.
If so, $(GH)/\sim=(G/\sim_X)(H/\sim_Y)$ and $(GH)\mid \sim=(G\mid \sim_X)(H\mid \sim_Y)$. Therefore,
\begin{align*}
\delta_\sim\circ m_{X,Y}(G\otimes H)&=\begin{cases}
(GH)/\sim\otimes (GH)\mid \sim\mbox{ if }\sim\in \eq^c[GH],\\
0\mbox{ otherwise}
\end{cases}\\
&=\begin{cases}
(G/\sim_X)(H/\sim_Y)\otimes (G\mid \sim_X)(H\mid \sim_Y)\mbox{ if }\sim_X\in \eq^c[G]\mbox{ and }\sim_Y \in \eq^c[H],\\
0\mbox{ otherwise}
\end{cases}\\
&=(m_{X/\sim_X,Y/\sim_Y}\otimes m_{X,Y})\circ (\id \otimes c\otimes \id)\circ (\delta_{\sim_X}\otimes \delta_{\sim_Y})
(G\otimes H).
\end{align*}

Let us finally prove the compatibility of $\delta$ with the coalgebraic structure. Obviously, 
\[(\varepsilon_\Delta \otimes \id)\circ \delta_{\sim_\emptyset}(1)=1=\varepsilon_\Delta(1)1.\]
 Let $X$ and $Y$ be two finite sets, $\sim_X\in \eq[X]$, $\sim_Y\in \eq[Y]$ and $G\in \calG[X]$. We put $\sim=\sim_X\sqcup \sim_Y$.  
\begin{align*}
(\Delta_{X/\sim_X,Y/\sim_Y}\otimes \id)\circ \delta_{\sim}(G)
&=\begin{cases}
(G/\sim)_{\mid X/\sim_X}\otimes (G/\sim)_{\mid Y/\sim_Y}\\
\hspace{.5cm}\mbox{ if $\sim\in \eq^c[G]$ and $Y/\sim_Y$ is an ideal of $G/\sim$},\\
0\mbox{ otherwise},
\end{cases}\\
m_{1,3,24}\circ (\delta_{\sim_X}\otimes \delta_{\sim_Y})\circ \Delta_{X,Y}(G)
&=\begin{cases}
(G_{\mid X})/\sim_X\otimes (G_{\mid Y})/\sim_Y\\
\hspace{.5cm}\mbox{ if $Y$ is an ideal of $G$, $\sim_X\in \eq^c[G_{\mid X}]$
and $\sim_Y\in \eq^c[G_{\mid Y}]$},\\
0\mbox{ otherwise},
\end{cases}
\end{align*}
Let us prove that $\sim\in \eq^c[G]$ and $Y/\sim_Y$ is an ideal of $G/\sim$ if, and only if, 
$Y$ is an ideal of $G$, $\sim_X\in \eq^c[G_{\mid X}]$ and $\sim_Y\in \eq^c[G_{\mid Y}]$.

$\Longrightarrow$. Let $y\in Y$ and $z\in X\sqcup Y$ such that $x\arc{G} y$. 
Then either $\cl_\sim(y)=\cl_\sim(z)$ or $\cl_\sim(y) \stackrel{_{G/\sim}}{\rightarrow}\cl_\sim(z)$.
As $Y/\sim_Y$ is an ideal of $G/\sim$, in both cases $z\in Y$. As $\sim=\sim_X\sqcup \sim_Y$,
its classes are the classes of $\sim_X$ and $\sim_Y$, and are connected by hypothesis. 
So  $\sim_X\in \eq^c[G_{\mid X}]$ and $\sim_Y\in \eq^c[G_{\mid Y}]$.

$\Longleftarrow$. As $\sim=\sim_X\sqcup \sim_Y$, its classes are the classes of $\sim_X$ and $\sim_Y$, which are connected
by hypothesis. Hence, $\sim\in \eq^c[G]$. Let $\cl_{\sim}(y)\in Y/\sim_Y$ and $\cl_\sim(z) \in [X\sqcup Y]/\sim$,
such that $\cl_\sim(y) \stackrel{_{G/\sim}}{\rightarrow}\cl_\sim(z)$. There exist $y',z'\in X\sqcup Y$
such that $y\sim y'$, $z\sim z'$ and $y'\arc{G} z'$. As $\sim=\sim_X\sqcup \sim_Y$, $y'\in Y$.
As $Y$ is an ideal of $G$, necessarily $z'\in Y$. As $\sim=\sim_X\sqcup \sim_Y$, $z'\in Y$ and finally
$\cl_\sim(z')\in Y/\sim_Y$.

Moreover,
\begin{align*}
(G/\sim)_{\mid X/\sim_X}&=(G_{\mid X})/\sim_X,&(G/\sim)_{\mid Y/\sim_Y}&=(G_{\mid Y})/\sim_Y,
\end{align*}
which finally proves the compatibility between $\delta$ and $\Delta$. \end{proof}

As a consequence, by Proposition \ref{prop1.1}, for any vector space $V$, we obtain a graded bialgebra $\calF_V[\bfG]$.
This is the vector space of mixed graphs whose vertices are decorated by elements of $V$,
any graph being linear in any of its decorations: these objects will be called $V$-linearly decorated graphs. 
For example, if $v_1,v_2,w_1,w_2\in V$ and $\lambda_1,\lambda_2,\mu_1,\mu_2\in \K$, in $\calF_V[\bfG]$,
if $v=\lambda_1 v_1+\lambda_2 v_2$ and $w=\mu_1 w_1+\mu_2 w_2$,
\begin{align*}
\xymatrix{\rond{v}\ar@/_/[r]&\rond{w}\ar@/_/[l]}
&=\lambda_1\mu_1\: \xymatrix{\rond{v_1}\ar@/_/[r]&\rond{w_1}\ar@/_/[l]}
+\lambda_2\mu_1 \:\xymatrix{\rond{v_2}\ar@/_/[r]&\rond{w_1}\ar@/_/[l]}\\
&+\lambda_1\mu_2\: \xymatrix{\rond{v_1}\ar@/_/[r]&\rond{w_2}\ar@/_/[l]}
+\lambda_2\mu_2 \:\xymatrix{\rond{v_2}\ar@/_/[r]&\rond{w_2}\ar@/_/[l]}.
\end{align*}
If $\mathcal{B}$ is a basis of $V$, a basis of $\calF_V[\bfG]$ is the set of mixed graphs whose vertices are decorated
by elements of $\mathcal{B}$. 
The product is the disjoint union. For any $V$-linearly decorated graph $G$,
\[\Delta(G)=\sum_{\mbox{\scriptsize $I$ ideal of $G$}} G_{\mid V(G)\setminus I}\otimes G_{\mid I}.\]

\begin{example}
If $v,w\in V$,
\begin{align*}
\Delta(\xymatrix{\rond{v}}\xymatrix{\rond{w}})
&=\xymatrix{\rond{v}}\xymatrix{\rond{w}}\otimes 1+1\otimes \xymatrix{\rond{v}}\xymatrix{\rond{w}}
+\xymatrix{\rond{v}}\otimes \xymatrix{\rond{w}}+\xymatrix{\rond{w}}\otimes \xymatrix{\rond{v}},\\
\Delta(\xymatrix{\rond{v}\ar@{-}[r]&\rond{w}})
&=\xymatrix{\rond{v}\ar@{-}[r]&\rond{w}}\otimes 1+1\otimes \xymatrix{\rond{v}\ar@{-}[r]&\rond{w}}
+\xymatrix{\rond{v}}\otimes \xymatrix{\rond{w}}+\xymatrix{\rond{w}}\otimes \xymatrix{\rond{v}},\\
\Delta(\xymatrix{\rond{v}\ar[r]&\rond{w}})&=\xymatrix{\rond{v}\ar[r]&\rond{w}}\otimes 1
+1\otimes \xymatrix{\rond{v}\ar[r]&\rond{w}}+\xymatrix{\rond{v}}\otimes \xymatrix{\rond{w}},\\
\Delta(\xymatrix{\rond{v}\ar@/_/[r]&\rond{w}\ar@/_/[l]})&=\xymatrix{\rond{v}\ar@/_/[r]&\rond{w}\ar@/_/[l]}
\otimes 1+1\otimes \xymatrix{\rond{v}\ar@/_/[r]&\rond{w}\ar@/_/[l]}.
\end{align*} \end{example}

The counit $\varepsilon_\Delta$ sends any mixed graph $G\neq 1$ to $0$. 
If $(V,\cdot,\Delta)$ is a  not necessarily unitary, commutative and cocommutative bialgebra, 
then $\calF_V[\bfG]$ inherits a second coproduct $\delta$: if $G$ is a $V$-linearly decorated graph,
\[\delta(G)=\sum_{\sim\in \eq^c[G]} G/\sim\otimes G\mid \sim,\]
where the vertices of $G/\sim \otimes G\mid \sim$ are decorated in the following way:
denoting by $d_G(x)$ the decoration of the vertex $x\in V(G)$, any vertex $\cl_\sim(x)$ of $G/\sim$
is decorated by the products of elements $d_G(y)'$, where $y\in \cl_\sim(x)$, whereas the vertex $x\in V(G\mid \sim)=V(G)$
is decorated by $d_G(x)''$, and everything being extended by multilinearity of each decoration. 
The counit $\epsilon_\delta$ is given on any mixed graph $G$ by
\[\epsilon_\delta(G)=\begin{cases}
\displaystyle \prod_{x\in V(G)} \epsilon_V\circ d_G(x) \mbox{ if }A(G)=E(G)=\emptyset,\\
0\mbox{ otherwise}.
\end{cases}\]
This construction is functorial in $V$.

\begin{example} If $v,w\in V$, with Sweedler's notation $\delta_V(u)=u'\otimes u''$, 
\begin{align*}
\delta(\xymatrix{\rond{v}}\xymatrix{\rond{w}})&=\xymatrix{\rond{v'}}\xymatrix{\rond{w'}}\:\otimes \:\xymatrix{\rond{v''}}\xymatrix{\rond{w''}},\\
\delta(\xymatrix{\rond{v}\ar@{-}[r]&\rond{w}})&=\xymatrix{\rond{v'}\ar@{-}[r]&\rond{w''}}\:
\otimes\:\xymatrix{\rond{v''}}\xymatrix{\rond{w''}}+\xymatrix{\rond{v'\cdot w'}}\otimes \:\xymatrix{\rond{v''}\ar@{-}[r]&\rond{w''}},\\[3mm]
\delta(\xymatrix{\rond{v}\ar[r]&\rond{w}})&=\xymatrix{\rond{v'}\ar[r]&\rond{w'}}\:\otimes\: \xymatrix{\rond{v''}}\xymatrix{\rond{w''}}
+\xymatrix{\rond{v'\cdot w'}}\otimes\:\xymatrix{\rond{v''}\ar[r]&\rond{w''}},\\[3mm]
\delta(\xymatrix{\rond{v}\ar@/_/[r]&\rond{w}\ar@/_/[l]})&=
\xymatrix{\rond{v'}\ar@/_/[r]&\rond{w'}\ar@/_/[l]}\:\otimes\: \xymatrix{\rond{v''}}\xymatrix{\rond{w''}}
+\xymatrix{\rond{v'\cdot w'}}\otimes\: \xymatrix{\rond{v''}\ar@/_/[r]&\rond{w''}\ar@/_/[l]}.
\end{align*}\end{example}

\begin{example}
We shall often work with $V=\K$, with its usual bialgebraic structure defined by $\delta_\K(1)=1\otimes 1$.
We shall then identify  any $V$-decorated mixed graph  whose any vertex is decorated by $1$ with the underlying mixed graph.
The double bialgebra $\calF_V[\bfG]$ is identified with $\calF[\bfG]$ and has for basis the set of (isomorphism classes of) mixed graphs. The coproduct simplifies. In order to improve the  readability, we shall write $\grdeuxoo$ if there are two arcs of opposite directions between two vertices.
Examples of coproducts $\Delta$ and $\delta$ are given in Tables \ref{table1} and \ref{table2}.
\end{example}

\begin{table}\label{table1}
\begin{align*}
\Delta(\grdeuxvide)
&=\grdeuxvide\otimes 1+1\otimes \grdeuxvide
+2\grun\otimes \grun,\\
\Delta(\grdeux)
&=\grdeux\otimes 1+1\otimes \grdeux
+2\grun\otimes \grun,\\
\Delta(\grdeuxo)&=\grdeuxo\otimes 1
+1\otimes \grdeuxo+\grun\otimes \grun,\\
\Delta(\grdeuxoo)&=\grdeuxoo
\otimes 1+1\otimes \grdeuxoo,\\
\Delta\left(\grtrois\right)&=\grtrois\otimes 1+1\otimes \grtrois+2\grdeux\otimes \grun+\grdeuxvide\otimes \grun+2\grun\otimes \grdeux+\grun\otimes\grdeuxvide,\\
\Delta\left(\grtroisoe\right)&=\grtroisoe\otimes 1+1\otimes \grtroisoe
+\grdeuxvide\otimes\grun+\grdeuxo\otimes\grun+\grun\otimes\grdeux+\grun\otimes \grdeuxo,\\
\Delta\left(\grtroiseo\right)&=\grtroiseo\otimes 1+1\otimes \grtroiseo
+\grdeuxo\otimes\grun+\grdeux\otimes\grun+\grun\otimes \grdeuxo+\grun\otimes\grdeuxvide,\\
\Delta\left(\grtroisooo\right)&=\grtroisooo\otimes 1+1\otimes \grtroisooo
+\grdeuxoo\otimes\grun,\\
\Delta\left(\grcompletun\right)&=\grcompletun\otimes 1+1\otimes\grcompletun
+3\grdeux\otimes\grun+3\grun\otimes\grdeux,\\
\Delta\left(\grcompletdeux\right)&=\grcompletdeux\otimes 1+1\otimes\grcompletdeux+\grdeuxo\otimes\grun+\grdeux\otimes\grun+\grun\otimes\grdeux+\grun\otimes\grdeuxo,\\
\Delta\left(\grcomplettrois\right)&=\grcomplettrois\otimes 1+1\otimes\grcomplettrois+2\grdeuxo\otimes\grun+\grun\otimes\grdeux,\\
\Delta\left(\grcompletquatre\right)&=\grcompletquatre\otimes 1+1\otimes\grcompletquatre+\grdeuxo\otimes\grun+\grun\otimes\grdeuxo,\\
\Delta\left(\grcompletcinq\right)&=\grcompletcinq\otimes 1+1\otimes\grcompletcinq,\\
\Delta\left(\grcompletsix\right)&=\grcompletsix\otimes 1+1\otimes\grcompletsix
++\grdeuxo\otimes\grun+\grun\otimes\grdeuxo.
\end{align*}
\caption{Examples of coproducts $\Delta$}
\end{table}

\begin{table}\label{table2}
\begin{align*}
\delta(\grdeuxvide)&=\grdeuxvide\otimes \grdeuxvide,\\
\delta(\grdeux)&=\grdeux
\otimes\grdeuxvide+\grun\otimes \grdeux,\\
\delta(\grdeuxo)&=\grdeuxo\otimes \grdeuxvide
+\grun\otimes\grdeuxo,\\
\delta(\grdeuxoo)&=
\grdeuxoo\otimes \grdeuxvide
+\grun\otimes \grdeuxoo,\\
\delta\left(\grtrois\right)&=\grtrois\otimes \grun\grun\grun+\grun \otimes \grtrois+2\grdeux\otimes \grdeux \grun,\\
\delta\left(\grtroisoe\right)&=\grtroisoe\otimes \grun\grun\grun+\grun \otimes \grtroisoe+\grdeuxo\otimes\grdeux \grun+\grdeux\otimes \grdeuxo \grun,\\
\delta\left(\grtroiseo\right)&=\grtroiseo\otimes \grun\grun\grun+\grun \otimes \grtroiseo+\grdeuxo\otimes\grdeux \grun+\grdeux\otimes \grdeuxo \grun,\\
\delta\left(\grtroisooo\right)&=\grtroisoo\otimes \grun\grun\grun+\grun \otimes \grtroisoo+\grdeuxo\otimes\grdeuxoo \grun+\grdeuxoo\otimes \grdeuxo \grun,\\
\delta\left(\grcompletun\right)&=\grcompletun\otimes \grun\grun\grun+\grun\otimes\grcompletun+3\grdeux\otimes\grdeux\grun,\\
\delta\left(\grcompletdeux\right)&=\grcompletdeux\otimes \grun\grun\grun+\grun\otimes\grcompletdeux+2\grdeuxo\otimes\grdeux\grun+\grdeux\otimes\grdeuxo\grun,\\
\delta\left(\grcomplettrois\right)&=\grcomplettrois\otimes \grun\grun\grun+\grun\otimes\grcomplettrois+2\grdeuxo\otimes\grdeuxo\grun+\grdeuxo\otimes\grdeux\grun,\\
\delta\left(\grcompletquatre\right)&=\grcompletquatre\otimes \grun\grun\grun+\grun\otimes\grcompletquatre+2\grdeuxo\otimes\grdeuxo\grun+\grdeuxoo\otimes\grdeux\grun,\\
\delta\left(\grcompletcinq\right)&=\grcompletcinq\otimes \grun\grun\grun+\grun\otimes\grcompletcinq+3\grdeuxoo\otimes\grdeuxo\grun,\\
\delta\left(\grcompletsix\right)&=\grcompletsix\otimes \grun\grun\grun+\grun\otimes\grcompletsix+2\grdeuxo\otimes\grdeuxo\grun+\grdeuxoo\otimes\grdeuxo\grun.
\end{align*}
\caption{Examples of coproducts $\delta$}
\end{table}

\begin{remark}
If $V$ is finite-dimensional, then, considering the number of vertices, $\calF_V[\bfG]$ is a graded bialgebra, whose homogeneous components are finite-dimensional. 
The dimension of the homogeneous component of degree $n$ of $\calF_V[\bfG]$ is given by the number of isomorphism classes of mixed graphs whose vertices are decorated by elements of $[N]$,
where $N=\dim(V)$, see the Appendix for more details on these numbers.
\end{remark}

\begin{prop}
Let $V$ be a (non necessarily unitary) commutative and cocommutative bialgebra. 
For any linearly $V$-decorated mixed graph $G$, we denote by $d_G:V(G)\longrightarrow V$ the decoration map of $G$
and by $\overline{G}$ the underlying mixed graph. Then the following map is a double bialgebra morphism:
\[\Theta_V:\left\{\begin{array}{rcl}
\calF_V[\bfG]&\longrightarrow&\calF[\bfG]\\
G&\longmapsto&\displaystyle \left(\prod_{x\in V(G)} \epsilon_V\circ d_G(x)\right) \overline{G}.
\end{array}\right.\]
\end{prop}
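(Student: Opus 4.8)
The plan is to realise $\Theta_V$ as the image of the counit of $V$ under the functoriality of the coloured Fock construction, and only then to read off the explicit formula. First I would observe that the counit $\epsilon_V:V\longrightarrow \K$ is itself a morphism of commutative and cocommutative bialgebras, where $\K$ carries its usual structure with coproduct $\delta_\K(1)=1\otimes 1$. Indeed, $\epsilon_V$ is an algebra morphism because it is the counit of $V$, and it is a coalgebra morphism because the counit axiom for $V$ gives $(\epsilon_V\otimes \epsilon_V)\circ \Delta_V=\delta_\K\circ \epsilon_V$. Since the assignment $V\mapsto \calF_V[\bfG]$ is functorial in $V$ as a double bialgebra (as recalled just above the statement), applying it to $\epsilon_V$ produces a double bialgebra morphism $\calF_{\epsilon_V}[\bfG]:\calF_V[\bfG]\longrightarrow \calF_\K[\bfG]=\calF[\bfG]$.

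The second step is to identify $\calF_{\epsilon_V}[\bfG]$ with $\Theta_V$. Unwinding the definition of the coloured Fock functor, on the degree $n$ component $V^{\otimes n}\otimes_{\sym_n}\bfG[n]$ the morphism $\calF_{\epsilon_V}[\bfG]$ acts through $\epsilon_V^{\otimes n}:V^{\otimes n}\longrightarrow \K$, so a mixed graph $G$ with vertex decorations $d_G(x)$ is sent to $\bigl(\prod_{x\in V(G)}\epsilon_V\circ d_G(x)\bigr)\overline{G}$, which is precisely the announced formula. This already establishes the proposition.

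As an alternative making the argument self-contained, I would verify the three compatibilities by hand. Compatibility with the product is immediate, both products being disjoint unions and the scalar $\prod_x \epsilon_V\circ d_G(x)$ being multiplicative over disjoint unions. Compatibility with $\Delta$ is routine once one notes that the ideals of $G$ coincide with those of $\overline{G}$ (they depend only on the arc structure) and that the restriction $G_{\mid J}$ carries the restricted decoration, so the scalar simply factors through $\Delta$. Compatibility with the counits $\varepsilon_\Delta$ and $\epsilon_\delta$ follows directly from the defining formulas.

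The one step that needs genuine care, and hence the main obstacle, is compatibility with $\delta$. Here I would use three facts: that $\eq^c[G]=\eq^c[\overline{G}]$, since connectedness only sees the underlying graph; that $\epsilon_V$ is multiplicative, so it distributes over the product $\prod_{y\in \cl_\sim(x)} d_G(y)'$ decorating a contracted vertex of $G/\sim$; and the counit identity $\sum \epsilon_V(d_G(x)')\,\epsilon_V(d_G(x)'')=\epsilon_V(d_G(x))$ applied at each vertex $x$. Tracking the Sweedler decomposition $\Delta_V(d_G(x))=\sum d_G(x)'\otimes d_G(x)''$ through the contraction --- the primes being multiplied together on the contracted side of $G/\sim$ and the double primes remaining on the extracted side $G\mid\sim$ --- this identity recombines the two scalar factors produced by $\Theta_V\otimes\Theta_V$ into the single factor $\prod_x \epsilon_V\circ d_G(x)$, yielding $(\Theta_V\otimes \Theta_V)\circ\delta=\delta\circ\Theta_V$. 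Once this recombination is isolated, everything else is bookkeeping.
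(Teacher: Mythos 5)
Your proposal is correct and follows essentially the same route as the paper, which simply observes that $\epsilon_V:V\longrightarrow\K$ is a bialgebra morphism and invokes the functoriality of $\calF_{-}[\bfG]$; your additional unwinding of the formula and the sketched hand-verification are consistent elaborations of that one-line argument.
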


\begin{proof}
The counit $\epsilon_V:V\longrightarrow \K$ is a bialgebra map. By functoriality (in $V$), $\Theta_V$ is a double bialgebra morphism. 
\end{proof}

\subsection{Cofreeness of the coalgebra $\calF[\bfG]$}

Let us define a second product on $\calF[\bfG]$.

\begin{prop}
Let $G$ and $H$ be two mixed graphs. The mixed graph $G\curvearrowright H$ is defined by
\begin{align*}
V(G\curvearrowright H)&=V(G)\sqcup V(H),\\
E(G\curvearrowright H)&=E(G)\sqcup E(H),\\
A(G\curvearrowright H)&=A(G)\sqcup A(H)\sqcup (V(G)\times V(H)).
\end{align*}
This product is bilinearly extended to $\calF[\bfG]$. Then $(\calF[\bfG],\curvearrowright,\Delta)$
is a unital infinitesimal bialgebra in the sense of \cite[Definition 2.1]{Loday2006}.
\end{prop}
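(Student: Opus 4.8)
The plan is to verify the single defining axiom of a unital infinitesimal bialgebra from \cite[Definition 2.1]{Loday2006}, namely the compatibility relation
\[\Delta(G\curvearrowright H)=(G\curvearrowright H)\otimes 1+1\otimes(G\curvearrowright H)+(\id\otimes\curvearrowright)\circ(\Delta\otimes\id)(G\otimes H)+(\curvearrowright\otimes\,\id)\circ(\id\otimes\Delta)(G\otimes H),\]
or, stated on reduced coproducts via $\tdelta$, that $\tdelta(G\curvearrowright H)=(G\curvearrowright H_{(1)})\otimes H_{(2)}+G_{(1)}\otimes(G_{(2)}\curvearrowright H)$ in Sweedler notation, where I must be careful that the left factor of $\Delta$ is the complement of the ideal and the right factor is the ideal. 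Since $\curvearrowright$ is bilinear and $\Delta$ is linear, it suffices to check this on single mixed graphs $G$ and $H$, and I would unwind both sides in terms of the combinatorial description of $\Delta$ given just before the statement, $\Delta(K)=\sum_{I\text{ ideal of }K}K_{\mid V(K)\setminus I}\otimes K_{\mid I}$.

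First I would analyze which subsets $J\subseteq V(G)\sqcup V(H)$ are ideals of $G\curvearrowright H$. The key structural observation is that $A(G\curvearrowright H)$ adds every arc from each vertex of $G$ to each vertex of $H$, so the ideal condition forces: if $J$ meets $V(G)$ at all, then (because every vertex of $G$ has an arc into every vertex of $H$) $J$ must contain all of $V(H)$. Hence the ideals $J$ of $G\curvearrowright H$ split into two families: either $J\subseteq V(H)$ and $J$ is an ideal of $H$ (the arcs from $G$ to $H$ impose no constraint since no vertex of $J$ lies in $V(G)$), or $J=I\sqcup V(H)$ where $I$ is an ideal of $G$ (the internal arcs of $H$ are automatically satisfied, and the cross-arcs land inside $J$). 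This dichotomy is the combinatorial heart of the proof.

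Next I would read off the restrictions. In the first family, $(G\curvearrowright H)_{\mid J}=H_{\mid J}$ and $(G\curvearrowright H)_{\mid (V(G)\sqcup V(H))\setminus J}=G\curvearrowright H_{\mid V(H)\setminus J}$, because the complement still contains all of $V(G)$ together with $V(H)\setminus J$ and retains all cross-arcs; summing over ideals $J$ of $H$ reproduces exactly $\sum_J (G\curvearrowright H_{\mid V(H)\setminus J})\otimes H_{\mid J}$, which matches the term $(\curvearrowright\otimes\,\id)\circ(\id\otimes\Delta)$. In the second family, writing $J=I\sqcup V(H)$ with $I$ an ideal of $G$, the restriction to $J$ is $G_{\mid I}\curvearrowright H$ and the restriction to the complement $V(G)\setminus I$ is $G_{\mid V(G)\setminus I}$, giving $\sum_I G_{\mid V(G)\setminus I}\otimes (G_{\mid I}\curvearrowright H)$, matching the term $(\id\otimes\curvearrowright)\circ(\Delta\otimes\id)$. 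The two extreme ideals $J=\emptyset$ (giving $(G\curvearrowright H)\otimes 1$) and $J=V(G)\sqcup V(H)$ (giving $1\otimes(G\curvearrowright H)$) account for the primitive-like boundary terms, so assembling all four contributions yields the infinitesimal compatibility.

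I expect the main obstacle to be purely bookkeeping rather than conceptual: one must confirm the ideal dichotomy with no overlap or omission (in particular that an ideal meeting both $V(G)$ and properly intersecting $V(H)$ cannot occur, and that the two boundary ideals are correctly assigned), and one must track the left/right convention of $\Delta$ consistently so that $G_{\mid I}\curvearrowright H$ versus $G\curvearrowright H_{\mid\cdot}$ appear on the correct sides of the tensor. Because coassociativity and counitality of $\Delta$ are already established in Proposition \ref{prop1.4} and $\curvearrowright$ is manifestly associative (concatenating the added cross-arc sets is associative), no further axioms need checking; the entire content is the single compatibility identity, which the ideal analysis above settles by direct comparison of the two sides.
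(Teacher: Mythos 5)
Your combinatorial analysis of the ideals of $G\curvearrowright H$ is exactly the right engine and coincides with the paper's, but the proof as written would fail at the final comparison because the axiom you set out to verify is misstated, and correspondingly you have overlooked the one subtle point of the whole argument. The compatibility in Loday and Ronco's Definition 2.1 reads
\[\Delta(x\curvearrowright y)=(x\otimes 1)\curvearrowright\Delta(y)+\Delta(x)\curvearrowright(1\otimes y)-x\otimes y,\]
with a crucial correction term $-x\otimes y$. Both your displayed identity and your reduced-coproduct reformulation omit it: if the $\Delta$'s inside your composite terms are full coproducts, your right-hand side double-counts each of $(G\curvearrowright H)\otimes 1$, $1\otimes(G\curvearrowright H)$ and $G\otimes H$; if they are reduced coproducts, your identity for $\tdelta(G\curvearrowright H)$ is missing $+\,G\otimes H$ on the right.

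The source of this term is precisely the place where your ``ideal dichotomy'' is not a dichotomy: the two families are not disjoint. The ideal $V(H)$ of $G\curvearrowright H$ occurs both as an ideal $J=V(H)$ of $H$ (first family) and as $I\sqcup V(H)$ with $I=\emptyset$ (second family), and in each family it contributes the same tensor $G\otimes H$. So summing over both families, as you propose, yields $\Delta(G\curvearrowright H)+G\otimes H$ rather than $\Delta(G\curvearrowright H)$; one must subtract $G\otimes H$ once, and that subtraction is exactly the $-x\otimes y$ of the axiom. The paper's proof records this explicitly (``the ideal $V(H)$ appears twice in this list, for $I=\emptyset$ and $J=V(H)$''). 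Once you correct the target identity and account for this overlap, the rest of your argument --- associativity and unitality of $\curvearrowright$, the classification of the ideals, and the computation of the two restrictions --- goes through and is the same as the paper's.
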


\begin{proof}
As we already know that $\Delta$ is coassociative and unitary, it remains to prove that:
\begin{enumerate}
\item $\curvearrowright$ is associative and unitary.
\item For any $x,y\in \calF[\bfG]$, 
$\Delta(x\curvearrowright y)=(x\otimes 1)\curvearrowright \Delta(y)+\Delta(x)\curvearrowright(1\otimes y)-x\otimes y$.
\end{enumerate}
1. Let $G,H,K$ be three mixed graphs. Then 
\begin{align*}
V((G\curvearrowright H)\curvearrowright K)=V(G\curvearrowright (H\curvearrowright K))
&=V(G)\sqcup V(H) \sqcup V(K),\\
E((G\curvearrowright H)\curvearrowright K)=E(G\curvearrowright (H\curvearrowright K))
&=E(G)\sqcup E(H) \sqcup E(K),\\
A((G\curvearrowright H)\curvearrowright K)=A(G\curvearrowright (H\curvearrowright K))
&=A(G)\sqcup A(H) \sqcup A(K) \sqcup (V(G)\times V(H))\\
&\sqcup (V(G)\times V(K))\sqcup (V(H)\sqcup V(K)),
\end{align*}
so $(G\curvearrowright H)\curvearrowright K)=G\curvearrowright (H\curvearrowright K)$.
Therefore, $\curvearrowright$ is associative. The unit is the empty mixed graph 1.\\

2. Let $G,H$ be two mixed graph. As there is an arc from any vertex of $G$ to any vertex of $H$ in $G\curvearrowright H$, 
the ideals of $G\curvearrowright H$ are:
\begin{itemize}
\item $I\sqcup V(H)$ where $I$ is an ideal of $G$. For such an ideal, 
\begin{align*}
(G\curvearrowright H)_{\mid I\sqcup V(H)}&=G_{\mid I}\curvearrowright H,&
(G\curvearrowright H)_{\mid V(G\curvearrowright H)\setminus ( I\sqcup V(H))}&=G_{\mid V(G) \setminus I}.
\end{align*}
\item Ideals $J$ of $H$. For such an ideal, 
\begin{align*}
(G\curvearrowright H)_{\mid J}&= H_{\mid J},&
(G\curvearrowright H)_{\mid V(G\curvearrowright H)\setminus J}&= G \curvearrowright H_{\mid V(H) \setminus J}.
\end{align*}
Note that the ideal $V(H)$ appears twice in this list, for $I=\emptyset$ and $J=V(H)$. Therefore,
\begin{align*}
\Delta(G\curvearrowright H)&=\sum_{\mbox{\scriptsize $I$ ideal of $G$}}
G_{\mid V(G) \setminus I}\otimes G_{\mid I}\curvearrowright H
+\sum_{\mbox{\scriptsize $J$ ideal of $G$}}
G \curvearrowright H_{\mid V(H) \setminus J}\otimes H_{\mid J}-G\otimes H\\
&=\Delta(G)\curvearrowright(1\otimes H)+(G\otimes 1)\curvearrowright \Delta(H)-G\otimes H. \qedhere
\end{align*}
\end{itemize}
\end{proof}

From \cite[Theorem 2.6]{Loday2006}:

\begin{cor}\label{cor1.9}
The coalgebra $(\calF[\bfG],\Delta)$ is isomorphic to the coalgebra $T(\mathrm{Prim}(\calF[\bfG]))$ with the 
deconcatenation coproduct. 
\end{cor}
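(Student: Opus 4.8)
The plan is to invoke the Loday--Ronco rigidity theorem for unital infinitesimal bialgebras, namely \cite[Theorem 2.6]{Loday2006}: any connected unital infinitesimal bialgebra is isomorphic, as both an algebra and a coalgebra, to the tensor bialgebra on its space of primitives, equipped with the concatenation product and the deconcatenation coproduct. The previous proposition already supplies the unital infinitesimal bialgebra structure $(\calF[\bfG],\curvearrowright,\Delta)$, so the only genuine content left is to verify the connectedness hypothesis that makes the cited theorem applicable.

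First I would exhibit the grading. The space $\calF[\bfG]$ is graded by the number of vertices of a mixed graph, and both the product $\curvearrowright$ and the coproduct $\Delta$ respect this grading: $\curvearrowright$ joins vertex sets disjointly, so degrees add, and in $\Delta(G)=\sum_{I} G_{\mid V(G)\setminus I}\otimes G_{\mid I}$ each term splits $V(G)$ into two pieces whose cardinalities sum to that of $G$. Moreover the degree-zero component is $\calF[\bfG]_0=\K\cdot 1$, spanned by the empty graph. Hence $(\calF[\bfG],\Delta)$ is a connected graded coalgebra, which is the conilpotency/connectedness condition required by \cite[Theorem 2.6]{Loday2006}.

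With this verified, I would apply the cited theorem to the unital infinitesimal bialgebra of the preceding proposition. It yields an isomorphism of bialgebras between $\calF[\bfG]$ and $T(\mathrm{Prim}(\calF[\bfG]))$; restricting attention to the coalgebra structure gives precisely the claimed coalgebra isomorphism with the deconcatenation coproduct, since that is the coproduct the theorem places on the tensor space. The proof carries essentially no obstacle beyond this: the one point that must not be glossed over is the connectedness of the graded coalgebra, since the rigidity theorem fails without such a conilpotency assumption; everything else is a direct citation.
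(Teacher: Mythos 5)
Your proposal matches the paper's argument exactly: the paper derives the corollary directly from \cite[Theorem 2.6]{Loday2006} applied to the unital infinitesimal bialgebra $(\calF[\bfG],\curvearrowright,\Delta)$ of the preceding proposition, with no further commentary. Your explicit verification of the connectedness hypothesis via the grading by number of vertices (with degree-zero part $\K\cdot 1$) is a correct and welcome addition that the paper leaves implicit.
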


\begin{remark}
The same proof can be adapted to any $\calF_V[\bfG]$. 
\end{remark}

\section{Sub-objects and quotients of mixed graphs}

\subsection{Simple and oriented graphs}

\begin{prop}\label{prop2.1}
$\bfG_s$ and $\bfG_o$ are twisted subbialgebras of $(\bfG,m,\Delta)$ and are stable under the contraction-extraction coproduct
$\delta$. 
\end{prop}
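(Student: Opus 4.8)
The plan is to reduce the entire statement to tracking the arc-set $A(-)$ and the edge-set $E(-)$ through each structural operation. Since $\bfG_s$ and $\bfG_o$ are already subspecies by Definition 1.1, being a twisted subbialgebra amounts to checking stability under the product $m$ and under $\Delta$, and the extra clause asks for stability under $\delta$. The defining property of $\bfG_o$ is $E(G)=\emptyset$ and that of $\bfG_s$ is $A(G)=\emptyset$, so in each case it suffices to verify that the relevant operation cannot create an arc (for the simple case) or an edge (for the oriented case).

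First I would treat the product. From its definition $A(GH)=A(G)\sqcup A(H)$ and $E(GH)=E(G)\sqcup E(H)$, so the disjoint union of two oriented (resp. simple) graphs is again oriented (resp. simple); this is the subalgebra statement already recorded after the definition of $m$. Next, for $\Delta$ the only operation in play is the restriction $G_{\mid I}$, and its definition gives $A(G_{\mid I})\subseteq A(G)$ and $E(G_{\mid I})\subseteq E(G)$. Hence if $A(G)=\emptyset$ (resp. $E(G)=\emptyset$), every tensor factor $G_{\mid X}\otimes G_{\mid Y}$ occurring in $\Delta(G)$ lies in $\bfG_s\otimes\bfG_s$ (resp. $\bfG_o\otimes\bfG_o$). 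Combined with the subalgebra property, this shows that $\bfG_s$ and $\bfG_o$ are twisted subbialgebras of $(\bfG,m,\Delta)$.

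Finally, for $\delta$ I would examine the two factors $G/\sim$ and $G\mid\sim$ of each nonzero term separately. The extraction $G\mid\sim$ only deletes edges and arcs, so again $A(G\mid\sim)\subseteq A(G)$ and $E(G\mid\sim)\subseteq E(G)$. For the contraction $G/\sim$, the definition of $A(G/\sim)$ is built purely out of the arcs of $G$ and the definition of $E(G/\sim)$ purely out of the edges of $G$; in particular $A(G)=\emptyset$ forces $A(G/\sim)=\emptyset$ and $E(G)=\emptyset$ forces $E(G/\sim)=\emptyset$. Therefore each term $G/\sim\otimes G\mid\sim$ of $\delta(G)$ stays inside $\bfG_s\otimes\bfG_s$ (resp. $\bfG_o\otimes\bfG_o$), giving stability under $\delta$.

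No step here is genuinely hard: the whole claim follows from the set-theoretic definitions of restriction, extraction, and contraction. The only point needing a moment's attention is the contraction $G/\sim$ in the simple case, where the definition of $E(G/\sim)$ carries the side condition $(x,y)\notin A(G)$ and $(y,x)\notin A(G)$; when $A(G)=\emptyset$ this condition is vacuous, so no edge is spuriously discarded and $G/\sim$ reduces to the usual graph contraction while still having $A(G/\sim)=\emptyset$.
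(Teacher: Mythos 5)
Your proof is correct and follows the same route as the paper's, which simply observes that disjoint unions, restrictions, extractions $G\mid\sim$ and contractions $G/\sim$ all preserve the properties $A(G)=\emptyset$ and $E(G)=\emptyset$; your version just spells out the edge/arc bookkeeping (including the correct observation that the side condition in the definition of $E(G/\sim)$ is vacuous when $A(G)=\emptyset$) that the paper leaves implicit.
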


\begin{proof}
If $G$ and $H$ are simple graphs, then $GH$ is a simple graph. If $G$ is a simple graph, then all its subgraphs are also simple graphs. 
Moreover, if $\sim \in \eq^c[G]$, then $G/\sim$ and $G\mid \sim$ are also simple graphs. 
The proof is similar for oriented graphs.
\end{proof}

\begin{cor}\label{cor2.2}
For any vector space $V$, $\calF_V[\bfG_s]$ is a subbialgebra of $\calF_V[\bfG]$ and 
$\calF_V[\bfG_o]$ is a subbialgebra of $\calF_V[\bfG]$.
For any (non necessarily unitary) commutative and cocommutative bialgebra $V$, 
$\calF_V[\bfG_s]$ is a double subbialgebra of $\calF_V[\bfG]$ and  $\calF_V[\bfG_o]$ is a double subbialgebra of $\calF_V[\bfG]$.
\end{cor}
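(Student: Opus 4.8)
The plan is to deduce everything from the functoriality of the $V$-coloured Fock functor, taking Proposition \ref{prop2.1} as the input at the level of species. Recall that $\calF_V$ is a functor from species to vector spaces which, being monoidal for the Cauchy tensor product, sends a twisted bialgebra to a bialgebra and, when $V$ is a commutative and cocommutative bialgebra, a twisted double bialgebra to a double bialgebra; this is the content of the construction of \cite[Definition 3.2]{Foissy41} together with the properties recalled in the references \cite{Foissy41,Aguiar2010} and just before the present corollary. Proposition \ref{prop2.1} asserts that the canonical inclusions $\bfG_s\hookrightarrow \bfG$ and $\bfG_o\hookrightarrow \bfG$ are morphisms of twisted bialgebras, and moreover are compatible with $\delta$, that is, morphisms of twisted double bialgebras once $V$ carries a bialgebra structure. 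Applying the functor $\calF_V$ to these inclusions therefore yields morphisms of (double) bialgebras from $\calF_V[\bfG_s]$ and $\calF_V[\bfG_o]$ to $\calF_V[\bfG]$.

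The only genuine point to check is that these induced maps are \emph{injective}, so that the source is realised as an honest sub(double)bialgebra rather than merely mapping into the target. For each $n$, the set $\calG_s[n]$ (resp.\ $\calG_o[n]$) is a $\sym_n$-stable subset of the basis $\calG[n]$ of $\bfG[n]$, so $\bfG_s[n]$ (resp.\ $\bfG_o[n]$) is a $\sym_n$-submodule of $\bfG[n]$ which is in fact a direct summand, its basis being a subset of the basis of $\bfG[n]$. Since $\K$ has characteristic zero, Maschke's theorem makes the functor $\coinv(V^{\otimes n}\otimes (-))$ on $\sym_n$-modules exact, hence it preserves this split injection. Passing to the direct sum over $n$, we identify $\calF_V[\bfG_s]$ with the subspace of $\calF_V[\bfG]$ spanned by the $V$-decorated simple graphs, and likewise $\calF_V[\bfG_o]$ with the span of the $V$-decorated oriented graphs.

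Combining the two observations gives the claim: $\calF_V[\bfG_s]$ and $\calF_V[\bfG_o]$ are subbialgebras of $(\calF_V[\bfG],m,\Delta)$ for any vector space $V$, and double subbialgebras of $\calF_V[\bfG]$ whenever $V$ is a commutative and cocommutative bialgebra. Concretely, the span of decorated simple (resp.\ oriented) graphs is closed under disjoint union, under $\Delta$ (a restriction $G_{\mid I}$ of a simple or oriented graph is again of the same type), and under $\delta$ (both $G/\sim$ and $G\mid \sim$ inherit the type of $G$) --- which is exactly the species-level information transported by $\calF_V$. The main obstacle, such as it is, is the verification that $\calF_V$ preserves injections; this is where characteristic zero enters, and it is what guarantees that a subobject of species is carried to a subobject of (double) bialgebras.
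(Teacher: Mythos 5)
Your argument is correct and follows the same route as the paper, which states this corollary as an immediate consequence of Proposition \ref{prop2.1} together with the functoriality of $\calF_V$. Your extra care about injectivity is welcome but slightly over-engineered: since $\bfG_s[n]$ and $\bfG_o[n]$ are spanned by $\sym_n$-stable subsets of the basis of $\bfG[n]$, the inclusion is a split monomorphism of $\sym_n$-modules with equivariant splitting, and any additive functor preserves split monomorphisms, so Maschke's theorem (and the characteristic-zero hypothesis) is not actually needed here.
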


In particular, $\calF_\K[\bfG_s]=\calF[\bfG_s]$ is the double bialgebra of graphs of \cite{Foissy39,Foissy36}
and $\calF_\K[\bfG_o]=\calF[\bfG_o]$ is the double bialgebra of \cite{Calaque2011}.

\subsection{Acyclic mixed graphs and finite topologies}

Let us recall the following definition:

\begin{defi}
Let $X$ be a finite set. 
\begin{enumerate}
\item A topology on $X$ is a subset $\calO$ of the set of subsets of $X$ such that:
\begin{itemize}
\item $\emptyset$ and $X$ belong to $\calO$.
\item If $O_1$, $O_2\in \calO$, then $O_1\cap O_2\in \calO$ and $O_1\cup O_2\in \calO$.
\end{itemize}
\item The set of topologies on $X$ is denoted by $\caltopo[X]$ and the space generated by $\caltopo[X]$ is denoted by $\bftopo[X]$. This defines a species $\bftopo$. 
\item A topology $\calO$ is $T_0$ is for any $x,y\in X$, with $x\neq y$, there exists $O\in \calO$ such that $(x\in O$ and $y\notin O$) or ($x\notin O$ and $y\in O$).
\end{enumerate}
\end{defi}

\begin{example}
Let $G\in \calG[X]$ be a mixed graph. We denote by $\calO_G$ the set of ideals of $G$.
\end{example}

Let us prove this reformulation of Alexandroff's theorem  \cite{Alexandroff1937}: 

\begin{lemma} \label{lemmeAlex}
Let $X$ be a finite set and $\calO$ be a topology on $X$. There exists an oriented graph $G$ such that $\calO_G=\calO$.
\end{lemma}

\begin{proof}
Let $\calO\in \caltopo[X]$. We define a relation $\preceq$ on $X$ as follows: for any $x,y\in X$, $x\preceq y$ if any $O\in \calO$ containing $x$ also contains $y$. 
We then define an oriented graph $G$ by $V(G)=X$ and
\[E(G)=\{(x,y)\in X^2\mid x\neq y\mbox{ and }x\preceq y\}.\]
Let us prove that $\calO_G=\calO$. Let $O\in \calO$, $x\in O$ and $y\in X$ such that $x\arc{G} y$. Then $x\preceq y$: by definition of $\preceq$, $y\in O$. So $O\in \calO_G$.Therefore, $\calO\subseteq \calO_G$.
Let $O\in \calO_G$. As $O$ is an ideal of $G$,
\begin{align*}
O&=\{y\in X\mid \exists x\in O,\: x\preceq y\}\\
&=\bigcup_{x\in O}\{y\in X\mid x\preceq y\}\\
&=\bigcup_{x\in X}\left(\bigcap_{O'\in \calO,\: x\in O'}O'\right).
\end{align*}
As $\calO$ is a topology, $O\in \calO$, so $\calO=\calO_G$. 
\end{proof}

\begin{prop}
The species $\bftopo$ is equipped with a twisted bialgebra structure as follows:
\begin{itemize}
\item For any finite sets $X,Y$, for any $(\calO_X,\calO_Y)\in \caltopo[X]\times \caltopo[Y]$,
\[m_{X,Y}(O_X\times O_Y)=\{I\sqcup J,\: I\in \calO_X,\: J\in \calO_Y\}.\]
\item For any finite sets $X,Y$, for any $\calO\in \caltopo[X\sqcup Y]$,
\[\Delta_{X,Y}(\calO)=\begin{cases}
\calO_{\mid X}\otimes \calO_{\mid Y}\mbox{ if }Y\in \calO,\\
0\mbox{ otherwise},
\end{cases}\]
where
\begin{align*}
\calO_{\mid X}&=\{X\cap O\mid O\in \calO\},&\calO_{\mid Y}&=\{Y\cap O\mid O\in \calO\}.
\end{align*}
Moreover, the following map is a surjective morphism of twisted bialgebras:
\[\Upsilon:\left\{\begin{array}{rcl}
\bfG&\longrightarrow&\bftopo\\
G\in \calG[X]&\longrightarrow&\calO_G\in \caltopo[X].
\end{array}\right.\]
\end{itemize}\end{prop}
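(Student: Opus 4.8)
The plan is to prove the two assertions at once, by exhibiting $\Upsilon$ as a surjective species morphism that intertwines both the products and the coproducts, and then transporting the twisted bialgebra axioms from $\bfG$ (Proposition \ref{prop1.4}) to $\bftopo$. First I would check that the two operations on $\bftopo$ are well defined: given topologies $\calO_X,\calO_Y$, the set $\{I\sqcup J:I\in \calO_X,\:J\in \calO_Y\}$ is closed under unions and intersections since $(I_1\sqcup J_1)\cup(I_2\sqcup J_2)=(I_1\cup I_2)\sqcup(J_1\cup J_2)$ and likewise for intersections, so $m_{X,Y}(\calO_X\times \calO_Y)$ is again a topology; and the trace topology $\calO_{\mid X}=\{O\cap X:O\in\calO\}$ appearing in $\Delta_{X,Y}$ is a topology on $X$. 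That $\Upsilon$ is a species morphism is immediate, as relabelling the vertices by a bijection relabels the ideals accordingly.

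Compatibility of $\Upsilon$ with the product is exactly the description of the ideals of a disjoint union used in the proof of Proposition \ref{prop1.4}: since $GH$ has no arc between $V(G)$ and $V(H)$, its ideals are precisely the sets $I\sqcup J$ with $I\in\calO[G]$ and $J\in\calO[H]$, so $\calO[GH]=m_{X,Y}(\calO[G]\times\calO[H])$. For the coproduct I would fix $G\in\calG[X\sqcup Y]$ and compare the two sides: both vanish unless $Y$ is an ideal of $G$, because $Y\in\calO[G]$ means by definition that $Y$ is an ideal of $G$, and when $Y$ is an ideal it remains to establish $\calO[G]_{\mid X}=\calO[G_{\mid X}]$ and $\calO[G]_{\mid Y}=\calO[G_{\mid Y}]$.

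The heart of the argument is this last pair of identities. The inclusion $\{I\cap X:I\in\calO[G]\}\subseteq\calO[G_{\mid X}]$ is straightforward, since any arc of $G_{\mid X}$ is an arc of $G$. For the reverse inclusion I would take an ideal $A$ of $G_{\mid X}$ and verify that $A\cup Y$ is an ideal of $G$ satisfying $(A\cup Y)\cap X=A$: if $z\in A\cup Y$ and $z\arc{G}w$, then either $z\in Y$, forcing $w\in Y$ because $Y$ is an ideal, or $z\in A\subseteq X$, in which case $w\in Y$, or else $w\in X$ with $z\arc{G_{\mid X}}w$ and hence $w\in A$. The hypothesis that $Y$ is an ideal is used precisely here. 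The identity for $Y$ is analogous and simpler, since once $Y$ is an ideal of $G$, every ideal of $G_{\mid Y}$ is already an ideal of $G$.

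It then remains to prove surjectivity, which is the finite Alexandrov correspondence. Given a topology $\calO$ on a finite set $X$, I would set $U_x=\bigcap\{O\in\calO:x\in O\}$ (the smallest open containing $x$) and build the oriented graph $G$ with $E(G)=\emptyset$ and $A(G)=\{(x,y):x\neq y,\:y\in U_x\}$; this is a valid mixed graph. A subset $O$ is an ideal of $G$ exactly when $x\in O$ implies $U_x\subseteq O$, so every open is an ideal, and conversely any ideal $O$ equals $\bigcup_{x\in O}U_x$ and is therefore open, giving $\calO[G]=\calO$. Finally, having shown that $\Upsilon$ is surjective and intertwines $m$ and $\Delta$, the twisted bialgebra axioms for $\bftopo$ (coassociativity, counity and the algebra-morphism property of $\Delta$) follow by transporting the corresponding identities of $\bfG$ through $\Upsilon$: each identity holds on an arbitrary topology $\Upsilon(G)$ because it holds for $G$ and $\Upsilon^{\otimes n}$ is surjective. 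I expect the main obstacle to be the restriction identity $\calO[G]_{\mid X}=\calO[G_{\mid X}]$, where the ideal hypothesis on $Y$ must genuinely be exploited to lift ideals of the induced subgraph to ideals of $G$.
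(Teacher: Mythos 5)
Your proof is correct and follows essentially the same route as the paper: exhibit $\Upsilon$ as a surjective species morphism intertwining $m$ and $\Delta$ (your construction via the minimal open sets $U_x$ is exactly the arrow graph of the Alexandroff specialization quasi-order used in the paper), and then transport the twisted bialgebra axioms from $\bfG$ through the surjection. You are in fact more careful than the paper on one point: the paper asserts $\calO[G_{\mid Y}]=\calO[G]_{\mid Y}$ for arbitrary $Y\subset X$, which fails in general (take $a\to b\to c$ and $Y=\{a,c\}$), whereas you correctly establish the identity only under the hypothesis that $Y$ is an ideal, which is the only case needed for compatibility with $\Delta_{X,Y}$.
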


\begin{proof}
The map $\Upsilon$ is clearly a species morphism. By Lemma \ref{lemmeAlex}, it is surjective.
Let $G,G',H,H'$ be graphs such that $\calO_G=\calO_{G'}$ and $\calO_H=\calO_{H'}$. Then
\[\calO_{GH}=\calO_G\calO_H=\calO_{G'}\calO_{H'}=\calO_{G'H'}.\]
Therefore, the product of $\bfG$ is compatible with the products of $\bfG$ and $\bftopo$. \\

For any graph $G\in \calG[X]$ and for any $Y\subset X$, $(\calO_G)_{\mid Y}=\calO_{G_{\mid Y}}$.
This implies that $\upsilon$ is compatible with the coproducts of $\bfG$ and $\bftopo$. 
As $\Upsilon$ is surjective and $\bfG$ is a twisted bialgebra, $\bftopo$ is also a twisted bialgebra.
\end{proof}

This map $\Upsilon$ is not compatible with the contraction-extraction coproduct:
for example, if
\begin{align*}
G&=\xymatrix{\rond{1}\ar@{-}[rr]\ar[rd]&&\rond{2}\\
&\rond{3}\ar[ru]&},&
G'&=\xymatrix{\rond{1}\ar[rd]&&\rond{2}\\
&\rond{3}\ar[ru]&}.
\end{align*}
then $\calO_{G_1}=\calO_{G_2}=\{\{1,2,3\},\{2,3\},\{2\},\emptyset\}$. Let us denote by $\sim$ the equivalence with classes $\{1,2\}$ and $\{3\}$. Then
\begin{align*}
\delta_\sim(G)&=\xymatrix{\rond{1,2}\ar@/_/[r]&\rond{3}\ar@/_/[l]}
\otimes \xymatrix{\rond{1}\ar@{-}[r]&\rond{2}}\xymatrix{\rond{3}},&
\delta_\sim(G')&=0.
\end{align*}

In order to obtain a second coproduct, we have to restrict ourselves to acyclic mixed graphs:

\begin{prop}
Let $G$ be a mixed graph. We shall say that $G$ is \emph{acyclic} if it does not contain any oriented path 
$x_0\arc{G} \ldots \arc{G} x_n$ with $x_0=x_n$ and $n\geqslant 2$. 
Acyclic mixed graphs and acyclic oriented graphs form two twisted subbialgebras of $(\bfG,m,\Delta)$.
\end{prop}

\begin{proof}
Obviously, if $G$ and $H$ are acyclic mixed graphs, then $GH$ is acyclic; if $G$ is acyclic and $I\subseteq V(G)$,
then $G_{\mid I}$ is also acyclic. Therefore, $\bfG_{ac}$ is a twisted subbialgebra of $\bfG$ and 
$\bfG_{aco}$ is a twisted subbialgebra of $\bfG_o$.
\end{proof}

\begin{remark}
$\bfG_{ac}$ is not stable under $\delta$. For example,  let us consider the following acyclic oriented graph:
\begin{align*}
G&=\xymatrix{\rond{1}\ar[rr]\ar[rd]&&\rond{2}\\
&\rond{3}\ar[ru]&},
\end{align*}
Let us denote by $\sim$ the equivalence with classes $\{1,2\}$ and $\{3\}$. Then
\begin{align*}
\delta_\sim(G)&=\xymatrix{\rond{1,2}\ar@/_/[r]&\rond{3}\ar@/_/[l]}
\otimes \xymatrix{\rond{1}\ar[r]&\rond{2}}\xymatrix{\rond{3}}.
\end{align*} \end{remark}

\begin{prop}
A contraction-extraction coproduct on $\bfG_{ac}$ is defined as follows:
for any acyclic graph $G\in \calG_{ac}[X]$, for any $\sim \in \eq[X]$,
\[\delta_\sim(G)=\begin{cases}
G/\sim\otimes G\mid \sim\mbox{ if }\sim\in \eq^{c,ac}[G],\\
0\mbox{ otherwise.}
\end{cases}\]
where $\eq^{c,ac}[G]$ the set of equivalences on $V(G)$ such that the classes of $G$ are connected and $G/\sim$ is acyclic. 
Moreover, the following map is a surjective morphisms of twisted bialgebras, compatible with the contraction-extraction coproducts:
\begin{align}
\label{eqvarpi0}
\varpi_0&:\left\{\begin{array}{rcl}
\bfG&\longrightarrow&\bfG_{ac}\\
G\in \calG[X]&\longmapsto&\begin{cases}
G\mbox{ if $G$ is acyclic},\\
0\mbox{ otherwise}.
\end{cases}\end{array}\right.
\end{align}
\end{prop}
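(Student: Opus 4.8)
The plan is to realize $(\bfG_{ac},m,\Delta,\delta)$ as a quotient of the twisted double bialgebra $(\bfG,m,\Delta,\delta)$ of Proposition \ref{prop1.6}, with $\varpi_0$ as the quotient projection, so that coassociativity, counity and the compatibilities of \cite{Foissy41} are inherited for free and the stated formula is recovered as the induced coproduct. Concretely, $\varpi_0$ is the projection of $\bfG$ onto the subspecies $\bfG_{ac}$ along the subspecies $\bfG_{nac}$ spanned by the non-acyclic mixed graphs; it is a species morphism (relabelling preserves acyclicity), the identity on $\bfG_{ac}$, hence surjective, and its kernel is $\bfG_{nac}$. Since the disjoint union of two acyclic graphs is acyclic while the disjoint union involving a non-acyclic graph still contains its oriented cycle, $\varpi_0$ is a morphism of twisted algebras; and a routine check shows it respects units and counits. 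It therefore remains to verify that $\varpi_0$ intertwines the two coproducts, which is where the content lies.

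The crux is the following combinatorial claim: if $G$ is a non-acyclic mixed graph, then for every $\sim\in\eq^c[G]$ at least one of $G/\sim$ and $G\mid\sim$ is again non-acyclic. To prove it I would fix an oriented cycle $x_0\arc{G}\cdots\arc{G}x_n=x_0$ with $n\geq 2$. If all the $x_i$ lie in a single class of $\sim$, then each arc $x_i\arc{G}x_{i+1}$ is kept in $G\mid\sim$, so that graph contains the cycle and is non-acyclic. Otherwise the cycle meets at least two classes; projecting to $G/\sim$, each step $x_i\arc{G}x_{i+1}$ with $\cl_\sim(x_i)\neq\cl_\sim(x_{i+1})$ gives an arc $\cl_\sim(x_i)\arc{G/\sim}\cl_\sim(x_{i+1})$ by the definition of $A(G/\sim)$, and deleting the steps internal to a class collapses $\cl_\sim(x_0),\ldots,\cl_\sim(x_n)$ into a closed directed walk on at least two distinct vertices of $G/\sim$. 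Such a walk contains a directed cycle, of length $\geq 2$ since $G/\sim$ has no loops; hence $G/\sim$ is non-acyclic. This is the step I expect to be the main obstacle, the delicate point being to confirm that the collapsed walk is a genuine cycle of length at least two rather than an artefact of identifying vertices.

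Granting the claim, the compatibility with $\delta$ follows. On a non-acyclic $G$ one has $\varpi_0(G)=0$, while for each $\sim\in\eq^c[G]$ one factor of $\delta_\sim(G)=G/\sim\otimes G\mid\sim$ is non-acyclic, so $(\varpi_0\otimes\varpi_0)\delta_\sim(G)=0$; both sides of $\delta\circ\varpi_0=(\varpi_0\otimes\varpi_0)\circ\delta$ then vanish. On an acyclic $G$, the graph $G\mid\sim$ is a disjoint union of restrictions of $G$, hence acyclic, so $\varpi_0(G\mid\sim)=G\mid\sim$, whereas $\varpi_0(G/\sim)$ equals $G/\sim$ precisely when $G/\sim$ is acyclic, that is when $\sim\in\eq^{c,ac}[G]$; thus $(\varpi_0\otimes\varpi_0)\delta_\sim(G)$ reproduces exactly the formula of the proposition. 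This simultaneously shows that the stated formula is the coproduct induced on the quotient and that $\varpi_0$ is compatible with the contraction-extraction coproducts.

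For the coproduct $\Delta$ I would argue analogously but more simply: if $G$ is non-acyclic and $Y$ is an ideal of $G$, an oriented cycle cannot leave $Y$ once it enters (ideals are stable under outgoing arcs) and must close up, so it lies entirely in $Y$ or entirely in $V(G)\setminus Y$; hence one of $G_{\mid V(G)\setminus Y},G_{\mid Y}$ is non-acyclic, giving $(\varpi_0\otimes\varpi_0)\Delta(G)=0=\Delta\circ\varpi_0(G)$, while on acyclic graphs every restriction is acyclic and $\varpi_0$ acts as the identity on both sides. Therefore $\varpi_0$ is a surjective morphism of twisted bialgebras intertwining $\delta$, so $(\bfG_{ac},m,\Delta,\delta)$ is the quotient of $(\bfG,m,\Delta,\delta)$ by the bi-ideal $\ker\varpi_0=\bfG_{nac}$; the contraction-extraction axioms and the compatibilities with $m$ and $\Delta$ are inherited from Proposition \ref{prop1.6}, which completes the proof.
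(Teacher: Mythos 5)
Your proposal is correct and follows essentially the same route as the paper: both realize $\bfG_{ac}$ as the quotient of $\bfG$ by the subspecies of non-acyclic graphs, checking that this subspecies is an ideal for $m$ and a coideal for $\Delta$ and $\delta$ (via the same case split on whether an oriented cycle lies in one class of $\sim$, respectively in one side of the ideal decomposition), so that the induced coproduct is the stated one. Your write-up merely spells out in more detail the step the paper leaves implicit, namely that the projected closed walk in $G/\sim$ genuinely contains an oriented cycle of length at least two.
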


\begin{proof}
Let $\bfI$ be the subspecies of $\bfG$ of non acyclic mixed graphs. 
If $G$ is a non acyclic mixed graph, then for any mixed graph $H$, $GH$ is not acyclic: $\bfI$ is an ideal. 
If $I$ is an ideal of $G$, if it contains a vertex on a cycle of $G$, then it contains all the vertices of the cycle:
therefore, $G_{\mid I}$ or $G_{V(G)\setminus I}$ is not acyclic, which proves that $\bfI$ is a coideal for $\Delta$.
Let $\sim \in \eq^c[G]$. Let us consider a cycle $C$ of $G$. If all the vertices of $C$ are equivalent for $\sim$,
then $G\mid \sim$ contains a cycle, so is not acyclic. Otherwise, $G/\sim$ contains a cycle: $\bfI$ is a coideal for $\delta$.
Identifying the species $\bfG/\bfI$ and $\bfG_{ac}$ via $\varpi_0$, $\bfG_{ac}$ inherits a contraction-extraction coproduct $\delta$,
which is precisely the one defined in this proposition.
\end{proof}

Similarly, restricting $\varpi_0$ to $\bfG_o$, its image $\bfG_{aco}$ inherits a contraction-extraction coproduct $\delta$, as a sub-quotient of $\bfG$. 
The image of acyclic graphs by $\Upsilon$ is given by $T_0$-topologies:

\begin{defi}
Let $X$ be a finite set and $\calO$ a topology on $X$. We shall say that $\calO$ is $T_0$ if for any $x\neq y\in X$,
there exists $O\in \calO$ such that $(x\in O$ and $y\notin O$) or ($x\notin O$ and $y\in O$).
This defines a subset $\calpos[X]$ of $\caltopo[X]$ subspecies $\bfpos$ of $\bftopo$.
\end{defi}

\begin{lemma} \label{lemmeAlex2}
Let $X$ be a finite set and $G\in \calG[X]$. The topology $\calO_G$ is $T_0$ if, and only if, $G$ is acyclic.
$x_0\arc{G} \ldots \arc{G} x_n$ with $x_0=x_n$ and $n\geqslant 2$. 
\end{lemma}

\begin{proof}
Let us assume that $G$ has a cycle $x_0\arc{G} \ldots \arc{G} x_n$ with $x_0=x_n$ and $n\geqslant 2$.
Then any ideal of $G$ containing one of the $x_i$'s contains all the $x_i$'s, so $\calO_G$ is not $T_0$.
Let us assume that $G$ is acyclic. Let $x\neq y\in X$. We consider
\begin{align*}
O_x&=\{z\in X\mid \exists k\in \N,\: \exists x_1,\ldots,x_k\in X, \:x\arc{G}x_1\arc{G}\ldots \arc{G} x_k \arc{G}z\},\\
O_y&=\{z\in X\mid \exists k\in \N,\: \exists y_1,\ldots,y_k\in X, \:y\arc{G}y_1\arc{G}\ldots \arc{G} y_k \arc{G}z\}.
\end{align*}
Both $O_x$ and $O_y$ are ideals of $G$, so belong to $\calO$. Moreover, for $k=0$, $x\in O_x$ and $y\in O_y$. If $x\in O_y$ and $y\in O_x$, 
then there exists $k,l\in \N$ and $x_1,\ldots,x_k,y_1,\ldots,y_l\in X$ such that
\[:x\arc{G}x_1\ldots \arc{G} x_k \arc{G}y\arc{G}y_1\ldots \arc{G} y_l \arc{G}x,\]
which contradicts the acyclicity of $G$. So $y\notin O_x$ or $x\notin O_y$: $\calO_G$ is $T_0$.  
\end{proof}

\begin{lemma}\label{lemmaconnexe}
Let $G$ be an oriented graph. Then $\calO_G$ is connected if and only if $G$ is connected.
\end{lemma}

\begin{proof}
Let us assume that $\calO_G$ is not connected. Let $O_1,O_2\in \calO_G$, both nonempty, such that $V(G)=O_1\sqcup O_2$. If $x\arc{G} y$, with $x\in O_1$, then as $O_1$ is an ideal, $y\in O_1$.
Consequently, there is no arc from a vertex of $O_1$ to a vertex of $O_2$. Symmetrically, there is no arc from a vertex of $O_2$ to a vertex of $O_1$. So $G$ is not connected.
Let us assume that $G$ is not connected. We can write $V(G)=O_1\sqcup O_2$, such that there is no arc from a vertex of $O_1$ to a vertex of $O_2$, nor from  a vertex of $O_2$ to a vertex of $O_1$.
Consequently, $O_1,O_2\in \calO_G$, so $\calO_G$ is not connected.
\end{proof}

\begin{prop}
There exists a unique product,  a unique coproduct and a unique contraction-extraction coproduct on $\bfpos$ making the map
$\Upsilon_{\mid \bfG_{aco}}:\bfG_{aco}\longrightarrow\bfpos$ 
a morphism of twisted bialgebras, compatible with the  contraction-extraction coproduct.
\end{prop}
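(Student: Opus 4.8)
The plan is to obtain all three operations on $\bfpos$ as the images of the corresponding operations of $\bfG_{aco}$ under the surjection $\Upsilon_{\mid\bfG_{aco}}$, so that uniqueness is automatic and only well-definedness requires work. For uniqueness, since $\Upsilon_{\mid\bfG_{aco}}$ is surjective every poset is of the form $\Upsilon(G)$ for some acyclic oriented graph $G$; if $\Upsilon_{\mid\bfG_{aco}}$ is to be a morphism of twisted bialgebras compatible with $\delta$, then the product of $\Upsilon(G)$ and $\Upsilon(H)$ must equal $\Upsilon(GH)$, while $\Delta$ and each $\delta_\sim$ on $\Upsilon(G)$ are forced to be $(\Upsilon\otimes\Upsilon)\Delta(G)$ and $(\Upsilon\otimes\Upsilon)\delta_\sim(G)$. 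This pins down the three structures, so for existence it suffices to check that these formulas do not depend on the chosen preimage $G$; equivalently, that $\ker(\Upsilon_{\mid\bfG_{aco}})$ is an ideal for $m$ and a coideal for both $\Delta$ and $\delta$. The axioms (associativity, coassociativity, the algebra--coalgebra compatibilities, and the contraction--extraction axioms of \cite{Foissy41}) then descend automatically from $\bfG_{aco}$ to the quotient $\bfG_{aco}/\ker(\Upsilon_{\mid\bfG_{aco}})\cong\bfpos$.

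Throughout I write $P=\leqslant_G=\Upsilon(G)$, where $x\leqslant_G y$ means there is an oriented path from $x$ to $y$; thus $P$ records exactly the reachability (transitive closure) of $G$. The product and the coproduct $\Delta$ are the easy cases. Since $GH$ has no arc between $V(G)$ and $V(H)$, $\Upsilon(GH)$ is the disjoint union of $P$ and $\Upsilon(H)$, which depends only on $\Upsilon(G)$ and $\Upsilon(H)$; indeed these are just the restriction to $\bfpos$ of the structures already built on $\bftopo$, the image of $\Upsilon_{\mid\bfG_{aco}}$ being the $T_0$ topologies, which are stable under product and restriction. For $\Delta$, observe that ``$Y$ is an ideal of $G$'' means precisely that $Y$ is an up-set of $P$, a notion internal to $P$; moreover, when $Y$ is an up-set and $X=V(G)\setminus Y$ the complementary down-set, any oriented path of $G$ joining two vertices of $X$ (resp. of $Y$) stays inside $X$ (resp. inside $Y$), so $\leqslant_{G_{\mid X}}=P_{\mid X}$ and $\leqslant_{G_{\mid Y}}=P_{\mid Y}$. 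Hence $\Upsilon(G_{\mid X})\otimes\Upsilon(G_{\mid Y})$ depends only on $P$.

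The contraction--extraction coproduct is the substantial point, and I expect it to be the main obstacle, since a priori neither the connectedness of the classes of $\sim$ nor the arcs of $G/\sim$ are visible on $P$. The argument rests on three observations. First, writing $R$ for the relation on classes defined by $\cl_\sim(x)\mathrel{R}\cl_\sim(y)$ iff $x'\leqslant_G y'$ for some $x'\sim x$, $y'\sim y$, projecting oriented paths of $G$ onto $G/\sim$ shows that the reachability order $\leqslant_{G/\sim}$ is exactly the reflexive--transitive closure $\overline R$ of $R$; as $R$ and $\overline R$ only involve $P$ and $\sim$, both the acyclicity of $G/\sim$ (antisymmetry of $\overline R$) and the poset $\Upsilon(G/\sim)=\overline R$ are determined by $P$. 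Second, if $G/\sim$ is acyclic then every class $C$ is order-convex in $P$: a configuration $x\leqslant_G z\leqslant_G y$ with $x,y\in C$ and $z\notin C$ would force $C\mathrel{R}\cl_\sim(z)$ and $\cl_\sim(z)\mathrel{R}C$, contradicting antisymmetry. Third, for an order-convex class $C$ convexity gives $\leqslant_{G_{\mid C}}=P_{\mid C}$ exactly as above, and passing to the transitive closure never changes the weak connectivity of an oriented graph (a transitive arc joins two vertices already linked by an undirected path); hence $G_{\mid C}$ is connected if and only if the comparability graph of $P_{\mid C}$ is connected, a condition internal to $P$. Combining the three, the condition $\sim\in\eq^{c,ac}[G]$, as well as both tensor factors $\Upsilon(G/\sim)=\overline R$ and $\Upsilon(G\mid\sim)=\bigsqcup_C P_{\mid C}$, depend only on $P$. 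This shows that $\delta$ descends, completing the verification that $\ker(\Upsilon_{\mid\bfG_{aco}})$ is compatible with all three structures and hence the existence claim.
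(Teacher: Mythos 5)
Your proof is correct and follows essentially the same route as the paper's: uniqueness from surjectivity of $\Upsilon_{\mid \bfG_{aco}}$, and existence by checking that the product, $\Delta$, and each $(\Upsilon\otimes\Upsilon)\circ\delta_\sim$ depend only on $\leqslant_G$, the key point in both being that $\leqslant_{G/\sim}$ is the reflexive--transitive closure of a relation determined by $\leqslant_G$ and $\sim$. Your explicit order-convexity argument for the classes is a welcome refinement: it cleanly justifies both that connectivity of the classes is readable from the poset and that $\Upsilon(G\mid\sim)=\Upsilon(G'\mid\sim)$, a step the paper dismisses as obvious even though it genuinely requires the acyclicity of $G/\sim$.
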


\begin{proof}
By Lemma \ref{lemmeAlex2}, $\Upsilon(\bfG_{aco})=\bfpos$, which gives the unicity of the contraction-extraction coproduct on $\bfpos$ compatible with $\Upsilon$.
The product and the coproduct on $\bfpos$ are obviously the restriction of the product and of the coproduct on finite topologies.\\

Let $X$ be a finite set, $G,G'\in \bfG_{aco}[X]$ such that $\Upsilon(G)=\Upsilon(G')$, and let $\sim\in \eq[X]$. Obviously, $\Upsilon(G\mid \sim)=\Upsilon(G'\mid \sim)$. Moreover,
\[\calO_{G/\sim}=\{\pi_\sim(O)\mid O\in \calO_G\}=\{\pi_\sim(O)\mid O\in \calO_{G'}\}=\calO_{G'/\sim},\]
where $\pi_\sim:X\longrightarrow X/\sim$ is the canonical surjection. So $\Upsilon(G/\sim)=\Upsilon(G'/\sim)$.
Let us now prove that $\eq^{c,ac}[G]=\eq^{c,ac}[G']$.
If $\sim \in \eq^{c,ac}[G]$, then $G/\sim$ is acyclic, so $\Upsilon(G/\sim)$ is $T_0$, by Lemma \ref{lemmeAlex2}, so $\Upsilon(G'/\sim)=\Upsilon(G/\sim)$ is $T_0$ and $G'/\sim$ is acyclic.
Moreover, by Lemma \ref{lemmaconnexe}, the connected components of $\Upsilon(G\mid \sim)=\Upsilon(G'\mid \sim)$ are the connected components of the oriented graph $G\mid \sim$,
that is to say the classes of $\sim$ as $\sim\in \eq^{c,ac}[G]$. Consequently, if $C$ is a class of $\sim$, $(\calO_{G'})_{\mid C}$ is connected: $\sim  \in \eq^{c,ac}[G']$. 
By symmetry, we obtain $\eq^{c,ac}[G]=\eq^{c,ac}[G']$. \\

As a consequence, for any $\sim\in \eq[X]$,
\begin{align*}
(\Upsilon \otimes \Upsilon)\circ \delta_\sim(G)&=
\begin{cases}
\Upsilon(G/\sim)\otimes \Upsilon(G\mid \sim)\mbox{ if }\sim\in \eq^{c,ac}[G],\\
0\mbox{ otherwise}
\end{cases}\\
&=
\begin{cases}
\Upsilon(G'/\sim)\otimes \Upsilon(G'\mid \sim)\mbox{ if }\sim\in \eq^{c,ac}[G'],\\
0\mbox{ otherwise}
\end{cases}\\
&=(\Upsilon \otimes \Upsilon)\circ \delta_\sim(G').
\end{align*}
Consequently, $\bfpos$ inherits a contraction-extraction coproduct as a quotient of $\bfG_{aco}$.  \end{proof}

\begin{remark}
Let $X$ be a finite set and $\calO$ be a topology on $X$. For any $x,y\in X$, we shall say that $x\sim_\calO y$ if any $O\in \calO$ containing $x$ or $y$ contains both $x$ and $y$. 
This defines an equivalence $\sim_\calO$ on $X$. Moreover, $X/\sim_\calO$ inherits from $\calO$ a topology, which turns out to be $T_0$. In other words, we obtain that
$\bftopo[X]$ and $\displaystyle \bigoplus_{\sim\in \eq[X]} \bfpos[X/\sim]$ are isomorphic, which gives a species isomorphism between $\bftopo$ and $\bfpos\circ \com$,
where here $\circ$ is the composition of species. 
The double twisted bialgebra structure which we obtain in this way is described in \cite{Foissy39}. Applying Aguiar and Mahajan's bosonic Fock functor \cite{Aguiar2010}, 
we obtain the double algebra of finite topologies of \cite{Foissy37}. 
\end{remark}

\subsection{Totally acyclic graphs}

\begin{defi}
Let $G$ be a mixed graph. We shall say that it is totally acyclic if does not contain any mixed path $(x_0,\ldots,x_n)$,
with $x_0=x_n$ and $n\geq 2$. Totally acyclic graphs form a subspecies $\bfG_{tac}$ of $\bfG$. 
\end{defi}

Note that totally acyclic mixed graphs are simply called acyclic in \cite{Beck2012}.

\begin{prop}
$\bfG_{tac}$ is a twisted subbialgebra of $\bfG$.
\end{prop}

\begin{proof}
If $G$ and $H$ are totally acyclic graphs, then $GH$ is totally acyclic. So $\bfG_{tac}$ is a twisted subalgebra of $\bfG$.
Let $G$ be a totally acyclic mixed graph and $I\subseteq V(G)$. As $G$ does not contain any mixed cycle, 
so does $G_{\mid I}$: $G_{\mid I}$ is totally acyclic. As a conclusion, $\bfG_{tac}$ is a twisted subcoalgebra of $\bfG$. 
\end{proof}

Consequently, for any vector space $V$, $\calF_V[\bfG_{tac}]$ is a subbialgebra of $(\calF_V[\bfG],m,\Delta)$.
The subspecies $\bfG_{tac}$ is not stable under $\delta$. For example, considering the mixed graph
\[G=\xymatrix{\rond{x}\ar[r]\ar@{-}[d]&\rond{y}\\ \rond{z}\ar@{-}[r]&\rond{t}\ar[u]},\]
which is totally acyclic, the equivalence relation $\sim$ whose classes are $\{x,y\}$, $\{z\}$ and $\{t\}$ belongs to $\eq^c[G]$
(in fact, even to $\eq^{c,ac}[G]$), and 
\[G/\sim=\xymatrix{&\rond{x,y}&\\
\rond{z}\ar@{-}[ru] \ar@{-}[rr]&&\rond{t}\ar[lu]}\]
 is clearly not totally acyclic.

\section{Applications}

\subsection{Three polynomial invariants}

Let $(V,\cdot,\delta_V)$ be a  non necessarily unitary, commutative bialgebra.
From \cite[Theorem 3.9]{Foissy40}, there exists a unique morphism $\phi_1$ of double bialgebras  from 
$(\calF_V[\bfG],m,\Delta,\delta)$ onto $(\K[X],m,\Delta,\delta)$ where the two coproducts of $\K[X]$ are defined by
\begin{align*}
\Delta(X)&=X\otimes 1+1\otimes X,&\delta(X)&=X\otimes X. 
\end{align*}
Let us determine $\phi_1$, firstly when $V=\K$. Let $G\in \calG[X]$, nonempty. Then, still by \cite[Theorem 3.9]{Foissy40},
\[\phi_1(G)=\sum_{k=0}^\infty \epsilon_\delta^{\otimes (k-1)}\circ \tdelta^{(k-1)}(G) H_k(X),\]
where $H_k$ is the $k$-th Hilbert polynomial:
\[H_k(X)=\frac{X(X-1)\ldots (X-k+1)}{k!}.\]

\begin{defi}
Let $G$ be a mixed graph.
\begin{enumerate}
\item A valid coloring of $G$ is a map $c:V(G)\longrightarrow \N_{>0}$ such that
\begin{align*}
&\forall x,y\in V(G),& 
\begin{cases}
x\arc{G} y&\Longrightarrow c(x)<c(y),\\
x\arete{G} y&\Longrightarrow c(x)\neq c(y).
\end{cases}
\end{align*}
\item A valid coloring $c$ of $G$ is packed if $c(V(G))=[\max(c)]$.
The set of valid packed colorings of $G$ is denoted by $\VPC(G)$.
\end{enumerate}
\end{defi}

\begin{prop}\label{prop3.2}
The unique morphism of double bialgebras from $\calF[\bfG]$ to $\K[X]$ is given 
on any mixed graph $G$ by
\begin{align*}
P_{chr_S}(G)&=\sum_{c \in \VPC(G)} H_{\max(c)}.
\end{align*}
Consequently, if $N\in \N$, $P_{chr_S}(G)(N)$ is the number of valid colorings $c$ such that $\max(c)\leq N$:
we recover the (strong) chromatic polynomial $P_{chr_S}(G)$ of \cite{Beck2012}. 
\end{prop}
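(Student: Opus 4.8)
The plan is to assemble the formula for $P_{chr_S}(G)$ from the pieces already laid out in the excerpt, and then to identify the resulting polynomial combinatorially as the strong chromatic polynomial. The starting point is the explicit formula for the canonical double bialgebra morphism $\phi_1$ recalled just before the statement, namely
\[
\phi_1(G)=\sum_{k=0}^\infty \epsilon_\delta^{\otimes(k-1)}\circ\tdelta^{(k-1)}(G)\,H_k(X),
\]
together with the identity displayed immediately above the statement,
\[
\epsilon_\delta^{\otimes(k-1)}\circ\tdelta^{(k-1)}(G)=|\{c\in\VPC(G),\:\max(c)=k\}|.
\]
First I would simply substitute the second identity into the first, which yields at once
\[
P_{chr_S}(G)=\phi_1(G)=\sum_{k=0}^\infty |\{c\in\VPC(G),\:\max(c)=k\}|\,H_k(X)
=\sum_{c\in\VPC(G)} H_{\max(c)}(X),
\]
after regrouping the sum over $k$ as a single sum indexed by the valid packed colorations $c$, each contributing the term $H_{\max(c)}(X)$. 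This is exactly the claimed expression, so the first half of the statement is essentially immediate once the preparatory lemmas are invoked.

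For the second half I would specialize $X$ to a nonnegative integer $N$ and match the formula against the definition of an $N$-valid (strong) coloration from the introduction. The key elementary fact is the standard combinatorial meaning of the Hilbert polynomial: for $N\in\N$, $H_k(N)=\binom{N}{k}$ counts the number of ways to choose a $k$-element subset of $[N]$, equivalently the number of strictly increasing maps $[k]\hookrightarrow[N]$. I would then argue that every valid coloration $c:V(G)\to\{1,\dots,N\}$ factors uniquely as a valid \emph{packed} coloration $c_0$ with $\max(c_0)=k$ (its ``order type'') followed by the strictly increasing relabeling of the used color values into $[N]$; this relabeling is exactly a choice of a $k$-subset of $[N]$, and it preserves both the $<$ constraint on arcs and the $\neq$ constraint on edges because it is strictly increasing. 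Hence the number of valid colorations with values in $[N]$ is $\sum_{c_0\in\VPC(G)}\binom{N}{\max(c_0)}=\sum_{c_0\in\VPC(G)}H_{\max(c_0)}(N)=P_{chr_S}(G)(N)$, which is the number of $N$-valid strong colorations, i.e. Beck--Bogart--Pham's strong chromatic polynomial evaluated at $N$.

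The genuinely substantive content—coassociativity and algebra-morphism properties of $\Delta$ and $\delta$, the compatibility making $(\bfG,m,\Delta,\delta)$ a double bialgebra, and the general existence-and-uniqueness theorem producing $\phi_1$—has all been established earlier (Propositions \ref{prop1.4} and \ref{prop1.6}, and \cite[Theorem 3.9]{Foissy40}), so I would cite rather than reprove it. \textbf{The main obstacle}, such as it is, lies not in any deep estimate but in cleanly justifying the packing bijection: one must verify that the decomposition of an arbitrary valid coloration into (packed order type) $\times$ (increasing embedding of color values) is a genuine bijection onto $\VPC(G)\times\{k\text{-subsets of }[N]\}$, and in particular that packing and increasing relabeling each preserve validity in both the arc and the edge conditions. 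This is the step where one has to be careful that the strict inequality for arcs and the mere disequality for edges behave correctly under an order-preserving relabeling; everything else is formal bookkeeping flowing from the already-proven structural results.
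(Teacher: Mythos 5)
Your proposal is correct and follows essentially the same route as the paper: the paper likewise obtains the formula by substituting the identity $\epsilon_\delta^{\otimes (k-1)}\circ\tdelta^{(k-1)}(G)=|\{c\in\VPC(G)\mid \max(c)=k\}|$ (derived from the description of $\tdelta^{(k-1)}$ via the surjections in $L_k(G)$) into the general expression for the unique double bialgebra morphism from \cite[Theorem 3.9]{Foissy40}, and then identifies the evaluation at $N$ with the count of valid colorations via the packing/relabeling correspondence, which the paper leaves implicit and you spell out.
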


\begin{proof}
For any $k\geqslant 1$, for any mixed graph $G$,  we denote by $L_k(G)$ the set of surjections $c:V(G)\longrightarrow [k]$ such that
\begin{align*}
&\forall x,y\in V(G),& x\arc{G}{y}\Longrightarrow c(x)\leq c(y).
\end{align*}
By definition of the coproduct $\Delta$, for any mixed graph $G$ with $n\geqslant 1$ vertices,
\begin{align*}
\tdelta^{(k-1)}(G)&=\sum_{c \in L_k(G)} G_{\mid c^{-1}(1)}\otimes \ldots \otimes G_{\mid c^{-1}(k)},
\end{align*}
and consequently, for any $V$-linearly decorated mixed graph $G$,
\begin{align*}
\epsilon_\delta^{\otimes (k-1)}\circ \tdelta^{(k-1)}(G)&=|\{c \in \VPC(G),\mid\max(c)=k\}|,
\end{align*}
which finally implies that
\begin{align*}
P_{chr_S}(G)&=\sum_{c\in \VPC(G)} H_{\max(c)}.
\end{align*}
Observe that any valid coloring of $G$ with $\max(c)\leq N$ can be uniquely decomposed as $c=c'\circ c''$, where $c':V(G)\longrightarrow [n]$
is a valid packed coloring for a certain $n\leq N$, and $c'':[n]\longrightarrow [N]$ is a strictly increasing map. Therefore,
\begin{align*}
|\{\mbox{valid coloring of $G$ of maximum $\leq N$}\}|&=\sum_{c\in \VPC(G)} \binom{N}{\max(c)}\\
&=\sum_{c\in \VPC(G)} H_{\max(c)}(N)\\
&=P_{chr_S}(G)(N). \qedhere
\end{align*}
\end{proof}

\begin{remark}
If $V$ is a  non necessarily unitary, commutative bialgebra, the unique double bialgebra morphism from
$(\calF_V[\bfG],m,\Delta,\delta)$ to $(\K[X],m,\Delta,\delta)$ is $P_{chr_S}\circ \Theta_V$ (which is indeed a double
bialgebra morphism by composition). It sends any $V$-linearly decorated mixed graph $G$ to 
\[P_{chr_S}\circ \Theta_V(G)=\left(\prod_{x\in V(G)} \epsilon_V\circ d_G(x)\right) P_{chr_S}(\overline{G}),\]
where $d_G$ is the decoration map of $G$ and $\overline{G}$ the underlying mixed graph.
\end{remark}

Let us now recover the weak chromatic polynomial of \cite{Beck2012}.

\begin{defi}
Let $G$ be a mixed graph.
\begin{enumerate}
\item A weak valid coloring of $G$ is a map $c:V(G)\longrightarrow \N_{>0}$ such that
\begin{align*}
&\forall x,y\in V(G),& x\arc{G} y&\Longrightarrow c(x)\leq c(y),\\
&&x\arete{G} y&\Longrightarrow c(x)\neq c(y).
\end{align*}
\item A weak valid coloring $c$ of $G$ is packed if $c(V(G))=[\max(c)]$.
The set of weak valid packed colorings of $G$ is denoted by $\WVPC(G)$.
\end{enumerate}
\end{defi}

We are going to use the action $\leftsquigarrow$ of the monoid of characters on the set of morphisms and the map $\theta$ given in (\ref{eqtheta}). 

\begin{notation}
Let $\lambda_W:\calF[G]\longrightarrow \K$ defined on any mixed graph $G$  by
\begin{align}
\label{eqlambdaW}
\lambda_W(G)&=\begin{cases}
1\mbox{ if }E(G)=\emptyset,\\
0\mbox{ otherwise}.
\end{cases}
\end{align}
This is obviously a character.
\end{notation}

\begin{cor}\label{cor3.4}
We consider $P_{chr_W}=\theta(\lambda_W)=P_{chr_S}\leftsquigarrow \lambda_W$. Then
$P_{chr_W}=P_{chr_S}\leftsquigarrow \lambda_W:(\calF[\bfG],m,\Delta)\longrightarrow(\K[X],m,\Delta)$ is a Hopf algebra morphism. 
If $N\in \N$, $P_{chr_W}(G)(N)$ is the number of weak valid colorings $c$ such that $\max(c)\leq N$: we recover the weak chromatic polynomial of \cite{Beck2012}. 
\end{cor}

\begin{proof}
As in the proof of Proposition \ref{prop3.2}, we obtain that for any mixed graph $G$,
\begin{align*}
P_{chr_W}(G)&=\sum_{c \in \WVPC(G)}H_{\max(c)}.
\end{align*}
Observe that any weak valid coloring of $G$ with $\max(c)\leq N$ can be uniquely decomposed as $c=c'\circ c''$, where $c':V(G)\longrightarrow [n]$
is a weak valid packed coloring for a certain $n\leq N$, and $c'':[n]\longrightarrow [N]$ is a strictly increasing map. Therefore,
\begin{align*}
|\{\mbox{weak valid coloring of $G$ of maximum $\leq N$}\}|&=\sum_{c\in \WVPC(G)} \binom{N}{\max(c)}\\
&=\sum_{c\in \WVPC(G)} H_{\max(c)}(N)\\
&=P_{chr_W}(G)(N). \qedhere
\end{align*}
\end{proof}

\begin{remark}
Let $G\in \calG[X]$. We denote by $\eq^c_W[G]$ the set of equivalences $\sim\in \eq^c[G]$ such that
\begin{align*}
&\forall x,y\in V(G),&x\arc{G} y&\Longrightarrow x\nsim y.
\end{align*}
Then, for any mixed graph $G$,
\[P_{chr_W}(G)=\sum_{\sim\in \eq^c[G]} \lambda_W(G\mid \sim)P_{chr_S}(G/\sim) =
\sum_{\sim\in \eq^c_W[G]} P_{chr_S}(G/\sim).\]
\end{remark}

\begin{example}\label{ex3.1}
For $n\geqslant 3$, let $G_n$  be the following mixed graph:
\begin{align*}
V(G_n)&=[n],&
E(G_n)&=\{\{1,n\}\},&
A(G_n)&=\{(i,i+1)\mid i\in [n-1]\}.
\end{align*}
In other terms,
\[G_n=\xymatrix{\rond{2}\ar[r]&\ldots\ar[r]&\rond{n-1}\ar[d]\\
\rond{1}\ar[u]\ar@{-}[rr]&&\rond{n}}.\]
Weak valid colorings of $G_n$ are non-decreasing maps $c:[n]\longrightarrow \N_{>0}$, such that $c(1)\neq c(n)$. Therefore,
\[P_{chr_W}(G_n)=\frac{X(X+1)\ldots(X+n-1)}{n!}-X.\]
Valid colorings of $G_n$ are strictly increasing maps $c:[n]\longrightarrow \N_{>0}$. Therefore,
\[P_{chr_S}(G_n)=\frac{X(X-1)\ldots (X-n+1)}{n!}.\]
\end{example}

With the help of \cite[Propositions 3.10 and 5.2]{Foissy40}, we now define a homogeneous morphism 
$P_0:\calF[\bfG]\longrightarrow \K[X]$ with the help of the element $\mu\in \calF[\bfG]_1^*$ defined by
\[\mu(\grun)=1,\]
where $\grun$ is the unique (up to an isomorphism) mixed graph with only one vertex.
Then, if $G$ is a mixed graph,
\begin{align*}
\mu^{\otimes k}\circ \tdelta^{(k-1)}(G)&=\begin{cases}
|L_k(G)| \mbox{ if }k=|V(G)|,\\
0\mbox{ otherwise}.
\end{cases}
\end{align*}
We denote by $\ell(G)$ the cardinality of $L_n(G)$, that is to say the number of bijections 
$c:V(G)\longrightarrow [n]$ such that
\begin{align*}
&\forall x,y\in V(G),&x\arc{G} y&\Longrightarrow c(x)<c(y),
\end{align*}
and finally:

\begin{cor}\label{cor3.5}
For any mixed graph $G$, we put
\begin{align*}
\lambda_0(G)&=\frac{\ell(G)}{|V(G)|!},&
P_0(G)&=\lambda_0(G)X^{|V(G)|}.
\end{align*}
Then $\lambda_0$ is a character of $\calF[\bfG]$ and $P_0:(\calF[\bfG],m,\Delta)\longrightarrow (\K[X],m,\Delta)$ 
is a bialgebra morphism.
\end{cor}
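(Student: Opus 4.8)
The plan is to deduce the statement from the general correspondence between homogeneous polynomial invariants and degree-one functionals recalled in the introduction, namely \cite[Propositions 3.10 and 5.2]{Foissy40}, applied to the bialgebra $(\calF[\bfG],m,\Delta)$. This bialgebra is graded by the number of vertices, $\calF[\bfG]_n$ being spanned by the mixed graphs $G$ with $|V(G)|=n$, and $\calF[\bfG]_0=\K$, so it is graded and connected. The cited results attach to every $\mu\in\calF[\bfG]_1^*$ a \emph{homogeneous} bialgebra morphism $\phi_0\colon(\calF[\bfG],m,\Delta)\longrightarrow(\K[X],m,\Delta)$, given for a nonempty mixed graph $G$ by
\[\phi_0(G)=\sum_{k\geqslant 1}\frac{1}{k!}\left(\mu^{\otimes k}\circ\tdelta^{(k-1)}\right)(G)\,X^k,\]
and assert moreover that $\epsilon_\delta\circ\phi_0$ is a character of $\calF[\bfG]$, where $\epsilon_\delta\colon P\mapsto P(1)$. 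So the task reduces to evaluating this formula for the specific $\mu$ with $\mu(\grun)=1$ and reading off the coefficients; the one point genuinely demanding attention is to confirm that $\calF[\bfG]$ satisfies the technical hypotheses on $\delta$ under which \cite{Foissy40} guarantees the bijection between homogeneous morphisms and $\calF[\bfG]_1^*$.

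Next I would substitute the computation already established just before the statement, $\mu^{\otimes k}\circ\tdelta^{(k-1)}(G)=|L_k(G)|$ when $k=|V(G)|$ and $0$ otherwise. For $G$ with $n=|V(G)|$ vertices every summand vanishes except the one with $k=n$, so $\phi_0(G)=\frac{1}{n!}|L_n(G)|\,X^n$. A surjection from an $n$-element set onto $[n]$ is a bijection, and for a bijection the inequality $c(x)\leqslant c(y)$ imposed by an arc $x\arc{G}y$ is equivalent to $c(x)<c(y)$; hence $|L_n(G)|=\ell(G)$ and $\phi_0(G)=\lambda_0(G)\,X^n$, which is the announced formula. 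That $\phi_0$ is a bialgebra morphism then holds by construction.

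It remains to see that $\lambda_0$ is a character, and this follows from the same source: $\epsilon_\delta\circ\phi_0$ is a character and $\epsilon_\delta\circ\phi_0(G)=\phi_0(G)(1)=\lambda_0(G)$, so $\lambda_0=\epsilon_\delta\circ\phi_0$. If a self-contained argument is preferred, one checks multiplicativity directly: since a disjoint union $GH$ carries no arc between $V(G)$ and $V(H)$, a valid bijection of $GH$ is exactly an interleaving of a valid bijection of $G$ with one of $H$, whence $\ell(GH)=\binom{|V(G)|+|V(H)|}{|V(G)|}\ell(G)\ell(H)$ and therefore $\lambda_0(GH)=\lambda_0(G)\lambda_0(H)$. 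This same identity is what makes $\phi_0$ multiplicative, so the two assertions of the corollary are really one. The main obstacle is thus not any of these direct substitutions but, as noted, the bookkeeping confirming that the general theorem of \cite{Foissy40} applies to $\calF[\bfG]$.
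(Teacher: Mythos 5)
Your proposal is correct and follows essentially the same route as the paper, which likewise derives the corollary from \cite[Propositions 3.10 and 5.2]{Foissy40} applied to the functional $\mu\in\calF[\bfG]_1^*$ with $\mu(\grun)=1$, together with the computation $\mu^{\otimes k}\circ\tdelta^{(k-1)}(G)=|L_k(G)|$ for $k=|V(G)|$ and $0$ otherwise. Your added direct check of multiplicativity via $\ell(GH)=\binom{|V(G)|+|V(H)|}{|V(G)|}\ell(G)\ell(H)$ is a harmless (and correct) supplement that the paper leaves implicit.
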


For any graph $G$, $P_0(G)(1)=\lambda_0(G)$. From \cite[Corollary 3.11]{Foissy40},
$P_0=P_{chr_S}\leftsquigarrow \lambda_0$. Therefore:

\begin{cor}
For any  mixed graph $G$ with $n$ vertices,
\begin{align*}
\ell(G)X^n&=\sum_{\sim\in\eq^c[G]}\ell(G\mid \sim)P _{chr_S}(G/\sim).
\end{align*}
\end{cor}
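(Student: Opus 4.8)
The plan is to unwind the factorization $\phi_0 = \phi_S \leftsquigarrow \lambda_0$ just recalled from \cite[Corollary 3.11]{Foissy40}, using the explicit description of the action $\leftsquigarrow$ and of the contraction-extraction coproduct $\delta$ on $\calF[\bfG]$. By definition of the action of the monoid of characters, $\phi_S \leftsquigarrow \lambda_0 = (\phi_S \otimes \lambda_0) \circ \delta$, and by Proposition \ref{prop1.6} together with the subsequent description, $\delta(G) = \sum_{\sim\in\eq^c[G]} G/\sim \otimes G\mid\sim$ for any mixed graph $G$ (the sum being finite, as $\eq^c[G]$ is a finite set). So first I would simply evaluate the right-hand side on $G$:
\[
\phi_0(G) = (\phi_S \otimes \lambda_0)\left( \sum_{\sim\in\eq^c[G]} G/\sim \otimes G\mid\sim \right) = \sum_{\sim\in\eq^c[G]} \phi_S(G/\sim)\, \lambda_0(G\mid\sim),
\]
where $\phi_S = P_{chr_S}$ is the unique double bialgebra morphism of Proposition \ref{prop3.2}.

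Next I would substitute the explicit values from Corollary \ref{cor3.5}. The left-hand side is $\phi_0(G) = \lambda_0(G) X^n = \frac{\ell(G)}{n!} X^n$, with $n = |V(G)|$. The key observation for the right-hand side is that the restriction $G\mid\sim$ has the same vertex set as $G$, namely $V(G\mid\sim) = V(G)$, so $|V(G\mid\sim)| = n$ as well; hence $\lambda_0(G\mid\sim) = \frac{\ell(G\mid\sim)}{n!}$ by the definition of $\lambda_0$. Plugging these in gives
\[
\frac{\ell(G)}{n!} X^n = \sum_{\sim\in\eq^c[G]} P_{chr_S}(G/\sim)\, \frac{\ell(G\mid\sim)}{n!},
\]
and multiplying both sides by $n!$ yields the claimed formula.

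The computation is essentially bookkeeping; the only point requiring attention is the invariance of the vertex set under the extraction $G\mid\sim$, which makes the two occurrences of $n!$ match and cancel cleanly (had I instead unwound an identity involving $G/\sim$ on the character side, the cardinalities would not align, so the direction of the action matters here). No genuine obstacle arises, since the substantive content — that $\phi_0$ factors as $\phi_S \leftsquigarrow \lambda_0$, and that $\lambda_0$ is a character — has already been established in Corollary \ref{cor3.5} and the cited \cite[Corollary 3.11]{Foissy40}.
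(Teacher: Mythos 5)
Your proof is correct and follows exactly the route the paper intends: the corollary is stated as an immediate consequence of $\phi_0=P_{chr_S}\leftsquigarrow\lambda_0$, and unwinding $(\phi\otimes\lambda)\circ\delta$ with $\lambda_0(G\mid\sim)=\ell(G\mid\sim)/n!$ (using $|V(G\mid\sim)|=n$) and then clearing the common factor $n!$ is precisely the intended bookkeeping. Your side remark about which tensor factor the character acts on is a correct and worthwhile sanity check, consistent with the paper's convention $\delta_\sim(G)=G/\sim\otimes G\mid\sim$.
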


\subsection{Invertible characters}

Let us fix a non unitary, commutative and cocommutative bialgebra $(V,\cdot,\delta_V)$. 
The product of the dual algebra is denoted by $\star_V$; its unit is the counit $\epsilon_V$. 
Let us now study the monoid of characters of $(\calF_V[\bfG],m,\delta)$, whose product is denoted by $\star$,
and in particular let us look for the group of its invertible elements. We shall use the following lemma:

\begin{lemma}\label{lemma3.9}
Let $(B,m,\delta)$ be a graded bialgebra. In particular, its homogeneous component of degree $0$ is a subbialgebra.
Let $\lambda$ be a character of $B$. Then $\lambda$ is an invertible character of $B$ if, and only if, 
its restriction $\lambda_0$ to $B_0$ is invertible in the algebra $B_0^*$. 
In the particular case where $B_0$ is generated by a family $(x_i)_{i\in I}$ of group-like elements,
$\lambda$ is an invertible character if, and only if, $\lambda(x_i)\neq 0$ for any $i\in I$. 
\end{lemma}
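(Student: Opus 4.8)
The statement concerns a graded bialgebra $(B,m,\delta)$, a character $\lambda$, and asks to characterize when $\lambda$ is invertible for the convolution product $\star$ dual to $\delta$. The key structural fact is that in a graded bialgebra, the coproduct $\delta$ respects the grading, so $\delta(B_0)\subseteq B_0\otimes B_0$, making $(B_0,m,\delta)$ a subbialgebra; hence $B_0^*$ is an algebra under $\star$ and restriction gives an algebra morphism $r:B^*\to B_0^*$, $\mu\mapsto \mu_{\mid B_0}$. The plan is to show $\lambda$ is $\star$-invertible in the monoid of characters if and only if $r(\lambda)=\lambda_{\mid B_0}$ is invertible in $B_0^*$.

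First I would establish the easy direction: if $\lambda$ is invertible with inverse $\mu$ (also a character), then applying the algebra morphism $r$ to $\lambda\star\mu=\mu\star\lambda=\epsilon_\delta$ gives $\lambda_{\mid B_0}\star_{B_0}\mu_{\mid B_0}=(\epsilon_\delta)_{\mid B_0}$, the unit of $B_0^*$, so $\lambda_{\mid B_0}$ is invertible.

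For the converse — which is the main work — I would build the inverse $\mu$ degree by degree, using the grading. Write $B=\bigoplus_{n\ge 0}B_n$ and decompose $\delta$ into graded components. I would define $\mu$ on $B_0$ to be the given inverse of $\lambda_{\mid B_0}$ in $B_0^*$, then extend inductively: assuming $\mu$ is defined on $B_0\oplus\cdots\oplus B_{n-1}$, the equation $\lambda\star\mu=\epsilon_\delta$ on $B_n$ reads, after isolating the top term, as an equation determining $\mu$ on $B_n$ provided the restriction of $\lambda$ to $B_0$ is invertible — this is exactly where the hypothesis is used, since the leading convolution contribution on $B_n$ involves $\lambda_{\mid B_0}$ acting on one tensor factor. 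The hard part will be checking that the $\mu$ so constructed is genuinely a \emph{character} (algebra morphism), not merely a linear functional satisfying the convolution equation: I expect this to follow because the convolution inverse of a character in the (complete) convolution algebra is automatically a character, which one verifies using that $\delta$ is a morphism of algebras (so $\Char(B)$ is closed under $\star$ and under taking $\star$-inverses when they exist). The induction terminates on each homogeneous component, giving a well-defined two-sided inverse $\mu$.

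Finally, for the special case where $B_0$ is generated as an algebra by group-like elements $(x_i)_{i\in I}$ (group-like for $\delta$, i.e. $\delta(x_i)=x_i\otimes x_i$), a character $\lambda$ restricted to $B_0$ is determined by the scalars $\lambda(x_i)$, and for group-like elements the convolution product reads $(\lambda\star\mu)(x_i)=\lambda(x_i)\mu(x_i)$. Thus $\lambda_{\mid B_0}$ is invertible in $B_0^*$ if and only if each $\lambda(x_i)$ is an invertible scalar, i.e. $\lambda(x_i)\neq 0$ for all $i\in I$; combined with the equivalence just proved, this yields the stated criterion. The only subtlety here is that the $x_i$ must generate $B_0$ as an algebra and that characters are algebra morphisms, so their values on generators determine them, which makes the reduction to the scalars $\lambda(x_i)$ immediate.
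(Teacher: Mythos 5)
Your proposal is correct and follows essentially the same route as the paper: the easy direction by restricting the two-sided inverse to the subbialgebra $B_0$, the converse by an inductive degree-by-degree construction of the inverse functional in which the degree-$n$ component is solved for using the invertibility of $\lambda_{\mid B_0}$ on the $B_0\otimes B_n$ part of $\delta$, the observation that a convolution inverse of a character is automatically a character (the paper verifies this via the algebra $(B\otimes B)^*$, comparing $\mu\circ m$ and $\mu\otimes\mu$ as inverses of $\lambda\otimes\lambda$), and the group-like case via $(\lambda\star\mu)(x_i)=\lambda(x_i)\mu(x_i)$ together with a basis of $B_0$ consisting of group-like monomials. No substantive gap.
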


\begin{proof}
We shall denote by $\pi_k$ the canonical projection on $B_k$ for any $k\in \N$. We put
\begin{align*}
\rho_L&=(\pi_0\otimes \id)\circ \delta,&\rho_R&=(\pi\otimes \pi_0)\circ \delta.
\end{align*}
As $\pi_0:B\longrightarrow B_0$ is a bialgebra map, $(B,\rho_L,\rho_R)$ is a $B_0$-bicomodule.
For any $x\in B_n$ with $n\geq 1$, we put
\[\delta'(x)=\delta(x)-\rho_L(x)-\rho_R(x).\]
By homogeneity of $\delta$,
\[\delta(x)=\sum_{i=0}^n (\pi_i\otimes \pi_{n-i})\circ \delta(x)=\rho_L(x)+
\underbrace{\sum_{i=1}^{n-1}  (\pi_i\otimes \pi_{n-i})\circ \delta(x)}_{\delta'(x)}+\rho_R(x).\]

$\Longrightarrow$. Let us denote by $\mu$ the inverse of $\lambda$ in the monoid of characters of $B$.
We put $\mu_0=\mu_{\mid B_0}$. For any $x\in B_0$,
\[\lambda_0*\mu_0(x)=(\lambda \otimes \mu)\circ \delta_0(x)=
(\lambda \otimes \mu)\circ \delta(x)=\lambda*\mu(x)=\epsilon(x).\]
Similarly, $\mu_0*\lambda_0(x)=\epsilon(x)$, so $\lambda_0$ is invertible in $B_0^*$.\\

$\Longleftarrow$. Let us define $\mu_n:B_n\longrightarrow \K$ for any $n$, such that if $x\in B_n$,
\[\left(\lambda \otimes \sum_{i=0}^n\mu_i\right)\circ \delta(x)=\epsilon(x).\]
We proceed by induction on $n$. If $n=0$, we us take $\mu_0$ the inverse of $\lambda_0$ in $B_0^*$.
Let us assume $n\geq 1$ and $\mu_0,\ldots,\mu_{n-1}$ defined. We first define $\nu:B_n\longrightarrow \K$ by
\begin{align*}
&\forall x\in B_n,&\nu(x)=\epsilon(x)-\left(\lambda \otimes \sum_{i=0}^{n-1}\mu_i\right)
(\delta'(x)+\rho_R(x)).
\end{align*}
This is well-defined, as $\displaystyle \delta'(x)+\rho(x)\in B\otimes \bigoplus_{i=0}^{n-1}B_i.$.
We then put $\mu_n=(\mu_0\otimes \nu)\circ \rho_L:B_n\longrightarrow \K$. 
As $\rho_L=(\pi_0\otimes \id)\circ \delta$, by homogeneity of $\delta$, $\rho_L(B_n)\subseteq B_0\otimes B_n$
and $\mu_n$ is well-defined. For any $x\in B_n$,
\begin{align*}
\left(\lambda \otimes \sum_{i=0}^n \mu_n\right)\circ \delta(x)&=(\lambda \otimes \mu_n)\circ \rho_L(x)
+\epsilon(x)-\nu(x)\\
&=(\lambda \otimes \mu \circ \nu)\circ (\id \otimes \rho_L)\circ \rho_L(x)+\epsilon(x)-\nu(x)\\
&=(\lambda \otimes \mu \circ \nu)\circ (\delta_0 \otimes \id)\circ \rho_L(x)+\epsilon(x)-\nu(x)\\
&=((\lambda_0*\mu_0)\circ \nu)\circ \rho_L(x)+\epsilon(x)-\nu(x)\\
&=(\epsilon_0\circ \nu)\circ \rho_L(x)+\epsilon(x)-\mu(x)\\
&=\nu(x)+\epsilon(x)-\mu(x)\\
&=\epsilon(x).
\end{align*}
Considering $\displaystyle \mu=\sum_{i=0}^\infty \mu_i\in B^*$, by construction $\lambda*\mu=\epsilon$.
Similarly, we can  define $\mu'\in B^*$ such that $\mu'*\lambda=\epsilon$. Then, as the convolution product $*$
is associative, $\mu'=\mu$ and $\lambda$ is invertible in $B^*$. Let us now prove that $\mu$ is a character.
We work in the algebra $(B\otimes B)^*$, whose convolution product is also denoted by $*$.
For any $x,y\in B$, with Sweedler's notation $\displaystyle \delta(z)=\sum_z z^{(1)}\otimes z^{(2)}$ for any $z\in B$,
\begin{align*}
(\mu\circ m)*(\lambda \circ m)(x\otimes y)&=\sum_x\sum_y \mu\left(x^{(1)}y^{(1)}\right)\lambda\left(x^{(1)}y^{(1)}\right)\\
&=\sum_{xy} \mu((xy)^{(1)})\lambda ((xy)^{(2)})\\
&=\epsilon(xy)\\
&=\epsilon(x)\epsilon(y)\\
&=\epsilon_{B\otimes B}(x\otimes y). 
\end{align*}
Similarly, $(\lambda \circ m)*(\mu\circ m)=\epsilon_{B\otimes B}$, so, as $\lambda$ is a character,
\[\mu\circ m=(\lambda \circ m)^{*-1}=(\lambda \otimes \lambda)^{*-1}=\mu\otimes \mu.\]
So $\mu$ is indeed a character of $B$.\\

Let us now consider the particular case where $B_0$ is generated by a family $(x_i)_{i\in I}$ of group-like elements.

$\Longrightarrow$.  If $\lambda$ is an invertible character, denoting its inverse by $\nu$, for any $i\in I$,
\[\lambda*\nu(x_i)=\epsilon(x_i)=1=\lambda(x_i)\nu(x_i),\]
so $\lambda(x_i)\neq 0$. 

$\Longleftarrow$. Let us assume that $\lambda$ is a character of $B$ such that $\lambda(x_i)\neq 0$ for any $i\in I$. 
In order to prove that $\lambda$ is an invertible character, it is enough to prove that $\lambda_0$ is invertible in $B_0^*$.
By hypothesis, $B_0$ has a basis $(y_j)_{j\in J}$ of noncommutative monomials in $(x_i)_{i\in I}$.
By multiplicativity, for any $j\in J$, $y_j$ is group-like and $\lambda(y_j)\neq 0$. We then define $\mu_0\in B_0^*$ by
\begin{align*}
&\forall j\in J,&\mu(y_j)&=\frac{1}{\lambda(y_j)}.
\end{align*}
Then for any $j\in J$,
\[\lambda_0*\mu_0(y_j)=\mu_0*\lambda_0(y_j)=\lambda_0(y_j)\mu_0(y_j)=1=\epsilon(y_j),\]
so $\lambda_0$ is invertible in $B_0^*$.  \end{proof}

In order to use this lemma, let us introduce a grading of $(\calF_V[\bfG],m,\delta)$.

\begin{prop} \label{prop4.8}
For any $V$-linearly decorated mixed graph $G$, we denote by $\cc(G)$ the number of connected components of $G$ and we put
\[\deg(G)=|V(G)|-\cc(G).\]
This defines a grading of  the bialgebra $(\calF_V[\bfG],m,\delta)$.
\end{prop}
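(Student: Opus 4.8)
The plan is to check the two conditions that make $\deg$ a gradation of the bialgebra $(\calF_V[\bfG],m,\delta)$: additivity of $\deg$ for the product $m$, and homogeneity of degree $0$ for the coproduct $\delta$. First I would observe that $\deg(G)=|V(G)|-\cc(G)$ depends only on the two quantities $|V(G)|$ and $\cc(G)$, both of which are invariant under relabelling of the vertices and insensitive to the $V$-decorations; hence $\deg$ is well defined on $\calF_V[\bfG]$ and each homogeneous component is a well-defined subspace. Additivity under the product is then immediate: for a disjoint union one has $|V(GH)|=|V(G)|+|V(H)|$ and $\cc(GH)=\cc(G)+\cc(H)$, so $\deg(GH)=\deg(G)+\deg(H)$.

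The substance lies in the coproduct. I would fix $G$ and $\sim\in\eq^c[G]$, and let $k$ be the number of classes of $\sim$. I would first dispose of the factor $G\mid\sim$: since it is the disjoint union of the restrictions $G_{\mid C}$ over the classes $C$, and each $G_{\mid C}$ is connected by definition of $\eq^c[G]$, the connected components of $G\mid\sim$ are exactly the classes of $\sim$. Thus $|V(G\mid\sim)|=|V(G)|$ and $\cc(G\mid\sim)=k$, giving $\deg(G\mid\sim)=|V(G)|-k$. On the other factor $|V(G/\sim)|=k$ by construction, so everything reduces to the single identity $\cc(G/\sim)=\cc(G)$: contracting classes that are each connected does not change the number of connected components.

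This identity is the one step I expect to require genuine care. I would prove it by showing that the projection $x\mapsto\cl_\sim(x)$ induces a bijection between the connected components of $G$ and those of $G/\sim$. That it induces a well-defined surjection is easy, since a path of $G$ projects to a walk of $G/\sim$ (steps internal to a class become constant). The delicate direction is injectivity: given a path in $G/\sim$ from $\cl_\sim(x)$ to $\cl_\sim(y)$, each of its steps comes from an edge or arc of $G$ between two distinct classes, and I would use the connectivity of each class in $G$ to travel inside it from the endpoint of one lifted step to the start of the next, concatenating these into a genuine path of $G$ from $x$ to $y$. Only the between-class adjacencies survive in $G/\sim$, so no information needed for this lift is lost when loops and redundant edges are deleted.

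Granting $\cc(G/\sim)=\cc(G)$, I would conclude
\[\deg(G/\sim)+\deg(G\mid\sim)=\bigl(k-\cc(G)\bigr)+\bigl(|V(G)|-k\bigr)=|V(G)|-\cc(G)=\deg(G),\]
so $\delta$ lands in the appropriate homogeneous components and is homogeneous of degree $0$. Finally I would note that the unit (the empty graph) has degree $0$, and that $\epsilon_\delta$ is supported on mixed graphs having neither edges nor arcs, for which $\cc(G)=|V(G)|$ and hence $\deg(G)=0$; so the degree-$0$ component behaves as required. This completes the verification that $\deg$ defines a gradation of $(\calF_V[\bfG],m,\delta)$.
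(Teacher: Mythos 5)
Your proof is correct and follows essentially the same route as the paper's: additivity of $|V(\cdot)|$ and $\cc(\cdot)$ under disjoint union, then the identities $|V(G\mid\sim)|=|V(G)|$, $\cc(G\mid\sim)=k$, $|V(G/\sim)|=k$, $\cc(G/\sim)=\cc(G)$ for $\sim\in\eq^c[G]$ with $k$ classes. The only difference is that you spell out the bijection between connected components of $G$ and of $G/\sim$, which the paper simply asserts in one line; this is a welcome extra level of detail, not a divergence of method.
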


\begin{proof}
Note that for any graph $G$, $\deg(G)\geqslant 0$. Let $G$ and $H$ be two $V$-linearly decorated mixed graphs. Then
\begin{align*}
|V(GH)|&=|V(G)|+|V(H)|,&\cc(GH)&=\cc(G)+\cc(H),
\end{align*}
so $\deg(GH)=\deg(G)+\deg(H)$. Let $G$ be a $V$-linearly decorated mixed graph and $\sim\in \eq^c[G]$. 
We denote by $\cl(\sim)$ the number of equivalence classes of $\sim$. As $\sim\in \eq^c[G]$,
\begin{align*}
|V(G\mid \sim)|&=|V(G)|,&\cc(G\mid \sim)&=\cl(\sim).
\end{align*}
Moreover, the connected components of $G/\sim$ are the contractions of the connected components of $G$, so
\begin{align*}
|V(G/\sim)|&=\cl(\sim),&\cc(G/\sim)&=\cc(G).
\end{align*}
We obtain that $\deg(G/\sim)+\deg(G\mid \sim)=|V(G)|-\cl(\sim)+\cl(\sim)-\cc(G)=\deg(G)$. So $(\calF_V[\bfG],m,\delta)$ is graded.
\end{proof}

For any graph $G$, $\deg(G)=0$ if, and only if, $E(G)=A(G)=\emptyset$. The subbialgebra $\calF_V[\bfG]_{\deg=0}$
of elements of degree $0$ is the symmetric algebra generated by elements $\xymatrix{\rond{v}}$, with $v\in V$.
The coproduct of such an element is given by the coproduct of $V$,
\[\delta(\xymatrix{\rond{v}})=\xymatrix{\rond{v'}}\otimes \xymatrix{\rond{v''}}.\]

\begin{prop}
Let $\lambda \in \Char(\calF_V[\bfG])$. We define a map $\lambda_V\in V^*$ by
\begin{align*}
&\forall v\in V,&\lambda_V(v)=\lambda(\xymatrix{\rond{v}}).
\end{align*}
Then $\lambda$ is invertible in $(\Char(\calF_V[\bfG]),\star)$ if, and only if, $\lambda_V$ is invertible in $(V^*,\star_V)$.
\end{prop}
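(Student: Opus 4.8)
The plan is to apply Lemma~\ref{lemma3.9} to the graded bialgebra $(\calF_V[\bfG],m,\delta)$ equipped with the gradation $\deg$ just introduced, so that the question is reduced to the degree-zero component. Write $B=\calF_V[\bfG]$ and $B_0=\calF_V[\bfG]_{\deg=0}$. As recalled above, $B_0$ is the symmetric algebra on the single-vertex graphs $\rond{v}$, $v\in V$, and on such a generator one has $\delta(\rond{v})=\rond{v'}\otimes\rond{v''}$; equivalently, the span $W$ of the single-vertex graphs is a subcoalgebra of $B_0$ on which $\delta$ is identified with $\delta_V$ via the isomorphism $v\mapsto\rond{v}$, and $\epsilon_\delta(\rond{v})=\epsilon_V(v)$. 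By Lemma~\ref{lemma3.9}, $\lambda$ is invertible in $(\Char(B),\star)$ if and only if its restriction $\lambda_0=\lambda_{\mid B_0}$ is invertible in $B_0^*$. Since $\lambda_V=(\lambda_0)_{\mid W}$ under this identification, it remains only to show that $\lambda_0$ is invertible in $B_0^*$ if and only if $\lambda_V$ is invertible in $(V^*,\star_V)$.

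For the direct implication I would use that $W$ is a subcoalgebra of $B_0$, i.e.\ $\delta(W)\subseteq W\otimes W$. If $\mu\in B_0^*$ is a two-sided $\star$-inverse of $\lambda_0$, set $\mu_V=\mu_{\mid W}$. Restricting the identity $\lambda_0\star\mu=\epsilon_\delta$ to $W$ and using $\delta(W)\subseteq W\otimes W$, one finds for every $v\in V$ that $(\lambda_V\star_V\mu_V)(v)=(\lambda_0\star\mu)(\rond{v})=\epsilon_\delta(\rond{v})=\epsilon_V(v)$, and symmetrically $(\mu_V\star_V\lambda_V)(v)=\epsilon_V(v)$. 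Hence $\mu_V$ is the inverse of $\lambda_V$ in $(V^*,\star_V)$. The point is that restricting the convolution to the subcoalgebra $W$ is the convolution of the restrictions, so no multiplicativity of $\mu$ is needed here.

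Conversely, suppose $\lambda_V$ admits a $\star_V$-inverse $\mu_V\in V^*$. Since $B_0$ is the symmetric algebra on $W$, I extend $\mu_V$ to the unique character $\mu_0$ of $B_0$. Because $\delta$ is an algebra morphism, $\lambda_0\star\mu_0=(\lambda_0\otimes\mu_0)\circ\delta$ is again a character of $B_0$, as is $(\epsilon_\delta)_{\mid B_0}$; and these two characters agree on each generator, since $(\lambda_0\star\mu_0)(\rond{v})=(\lambda_V\star_V\mu_V)(v)=\epsilon_V(v)=\epsilon_\delta(\rond{v})$. Two characters of $B_0$ that coincide on the generating family $(\rond{v})_{v\in V}$ are equal, so $\lambda_0\star\mu_0=(\epsilon_\delta)_{\mid B_0}$, and symmetrically $\mu_0\star\lambda_0=(\epsilon_\delta)_{\mid B_0}$. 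Thus $\lambda_0$ is invertible in $B_0^*$, and Lemma~\ref{lemma3.9} yields the claim.

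The genuinely substantive step is the reduction to $B_0$ through Lemma~\ref{lemma3.9}, combined with the identification of $B_0$ with the symmetric bialgebra on $W\cong V$; once this is in place, both implications are routine convolution computations. The one subtlety that deserves care is that Lemma~\ref{lemma3.9} phrases invertibility in the \emph{full} dual algebra $B_0^*$, where the inverse need not a priori be a character. I sidestep this in the direct implication by restricting to the subcoalgebra $W$ (which does not require the inverse to be multiplicative), and in the converse by exhibiting directly a character inverse $\mu_0$ built from $\mu_V$.
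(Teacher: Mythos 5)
Your proposal is correct and follows essentially the same route as the paper: reduce to the degree-zero component via Lemma~\ref{lemma3.9}, identify $\calF_V[\bfG]_{\deg=0}$ with the symmetric algebra on the single-vertex graphs (on which $\delta$ restricts to $\delta_V$), then restrict the inverse in one direction and extend $\lambda_V^{\star_V-1}$ multiplicatively in the other. The only cosmetic difference is that the paper obtains the forward implication by restricting the inverse of $\lambda$ on all of $\calF_V[\bfG]$ directly to the single-vertex graphs, whereas you first pass to the degree-zero component; both rest on the same observation that the span of the $\xymatrix{\rond{v}}$ is a subcoalgebra for $\delta$.
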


\begin{proof}
$\Longrightarrow$. Let us assume that $\lambda$ is an invertible character. Denoting by $\mu$ its inverse,
$\mu_{\mid V}$ provides an inverse of $\lambda_V$ in $V^*$. \\

$\Longleftarrow$. Let us assume that $\lambda_V$ is invertible in $V^*$. 
By Lemma \ref{lemma3.9}, it is enough to prove that $\lambda_0$ is invertible in the algebra $\calF_V[\bfG]_0$. 
By construction of the graduation, $\calF_V[\bfG]_0$ is the symmetric algebra generated by $V$. 
Extending multiplicatively the inverse of $\lambda_V$ to $\calF_V[\bfG]_0$, we obtain an inverse of $\lambda_0$. 
\end{proof}

In the particular case where $V=\K$:

\begin{cor}
Let $\lambda$ be a character of $\calF[\bfG]$. It  is invertible in the monoid $(\Char(\calF[\bfG]),\star)$ if, and only if,
\[\lambda(\grun)\neq 0.\]
\end{cor}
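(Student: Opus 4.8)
The plan is to obtain this corollary as the special case $V=\K$ of the preceding proposition, so that essentially no new work is required. First I would recall, following the remark fixing the bialgebra structure on $\K$, that $\delta_\K(1)=1\otimes 1$ makes $1$ group-like, and that under the identification $\calF_\K[\bfG]=\calF[\bfG]$ the single vertex $\xymatrix{\rond{1}}$ decorated by $1$ is precisely $\grun$. Hence, for any character $\lambda$ of $\calF[\bfG]$, the linear form $\lambda_\K\in \K^*$ attached to $\lambda$ in the proposition is determined by $\lambda_\K(1)=\lambda(\xymatrix{\rond{1}})=\lambda(\grun)$.

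Next I would identify the dual monoid $(\K^*,\star_\K)$. Since $\delta_\K(1)=1\otimes 1$, the product $\star_\K$ dual to $\delta_\K$ is nothing but the usual multiplication of $\K$, with unit the counit $\epsilon_\K$, which sends $1$ to $1$; thus $(\K^*,\star_\K)$ is isomorphic to the multiplicative monoid of the field $\K$ via the map $f\mapsto f(1)$. In this monoid a form $f$ is invertible exactly when $f(1)\neq 0$.

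Combining these two observations, the proposition tells us that $\lambda$ is invertible in $(\Char(\calF[\bfG]),\star)$ if, and only if, $\lambda_\K$ is invertible in $(\K^*,\star_\K)$, and the latter holds if, and only if, $\lambda_\K(1)=\lambda(\grun)\neq 0$, which is the desired equivalence. I do not expect any genuine obstacle here: the only steps needing a moment of care are the purely formal identification of $(\K^*,\star_\K)$ with the multiplicative monoid of $\K$ and the matching of the decorated vertex $\xymatrix{\rond{1}}$ with $\grun$, both of which are immediate from the definitions and the remark fixing the bialgebra structure on $\K$.
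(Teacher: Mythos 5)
Your argument is correct and is essentially the paper's: the paper introduces this corollary explicitly as ``the particular case where $V=\K$'' and its one-line proof invokes Lemma \ref{lemma3.9} with the single group-like element $\grun$, which is exactly what your specialization of the preceding proposition to $V=\K$ amounts to, since that proposition is itself deduced from Lemma \ref{lemma3.9}. Your identification of $(\K^*,\star_\K)$ with the multiplicative monoid of $\K$ via $f\mapsto f(1)$ and of the vertex decorated by $1$ with $\grun$ is exactly the right bookkeeping, so no gap remains.
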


\begin{proof}
This is implied by Lemma \ref{lemma3.9}, with the family of group-like elements reduced to $\grun$. 
\end{proof}

In particular, $\lambda_W$ and $\lambda_0$ are invertible. Their inverses are denoted respectively by $\nu_W$ and $\mu_S$. 
We also put $\mu_W=\mu_S\star \lambda_W$. Then, as $P_{chr_W}=P_{chr_S}\leftsquigarrow \lambda_W$ 
and $P_0=P_{chr_S}\leftsquigarrow \lambda_0$, we obtain
\begin{align*}
P_{chr_S}&=P_{chr_W}\leftsquigarrow \nu_W,&
P_{chr_S}&=P_0\leftsquigarrow \mu_S,&
P_{chr_W}&=P_0\leftsquigarrow \mu_W.
\end{align*}

\begin{prop}\label{prop3.13}
For any mixed graph $G$,
\begin{align*}
P_{chr_S}(G)&=\sum_{\sim\in \eq^c[G]} \lambda_0(G/\sim) \mu_S(G\mid \sim) X^{\cl(\sim)}
=\sum_{\sim\in \eq^c[G]} \nu_W(G\mid \sim) P_{chr_W}(G/\sim),\\
P_{chr_W}(G)&=\sum_{\sim\in \eq^c[G]} \lambda_0(G/\sim) \mu_W(G\mid \sim) X^{\cl(\sim)},
\end{align*}
where $\cl(\sim)$ is the number of classes of $\sim$. 
\end{prop}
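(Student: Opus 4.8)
The plan is to read off all three formulas directly from the three convolution-action identities recalled immediately before the statement, namely $P_{chr_S}=\phi_0\leftsquigarrow \mu_S$, $P_{chr_S}=P_{chr_W}\leftsquigarrow \nu_W$ and $P_{chr_W}=\phi_0\leftsquigarrow \mu_W$. The only inputs I need are the definition of the action, $\phi\leftsquigarrow \lambda=(\phi\otimes \lambda)\circ \delta$, the explicit form of the contraction-extraction coproduct on $\calF[\bfG]$, namely $\delta(G)=\sum_{\sim\in \eq^c[G]}G/\sim\otimes G\mid \sim$, and the explicit homogeneous shape of $\phi_0$ given in Corollary \ref{cor3.5}, namely $\phi_0(H)=\lambda_0(H)X^{|V(H)|}$.

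First I would apply the action to $P_{chr_S}=\phi_0\leftsquigarrow \mu_S$. Unfolding the definition and substituting the coproduct gives
\[P_{chr_S}(G)=(\phi_0\otimes \mu_S)\circ \delta(G)=\sum_{\sim\in \eq^c[G]}\phi_0(G/\sim)\,\mu_S(G\mid \sim).\]
Replacing $\phi_0(G/\sim)$ by $\lambda_0(G/\sim)X^{|V(G/\sim)|}$ and observing that $G/\sim$ has exactly one vertex per equivalence class, so that $|V(G/\sim)|=\cl(\sim)$, yields the first displayed formula. The third formula is obtained in exactly the same way from $P_{chr_W}=\phi_0\leftsquigarrow \mu_W$, the only change being that $\mu_S$ is replaced by $\mu_W$.

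For the second equality I would instead expand $P_{chr_S}=P_{chr_W}\leftsquigarrow \nu_W=(P_{chr_W}\otimes \nu_W)\circ \delta$, which gives at once
\[P_{chr_S}(G)=\sum_{\sim\in \eq^c[G]}\nu_W(G\mid \sim)\,P_{chr_W}(G/\sim),\]
with no further rewriting needed, since here $P_{chr_W}$ is applied to $G/\sim$ as it stands. There is no genuine obstacle in this argument: the whole proof is a mechanical substitution of the coproduct $\delta$ and of the formula for $\phi_0$ into the three action identities. The single point deserving a word of justification is the identification $|V(G/\sim)|=\cl(\sim)$, which is immediate from the definition of the contracted graph $G/\sim$, whose vertex set is $V(G)/\sim$ and thus has cardinality equal to the number of classes of $\sim$.
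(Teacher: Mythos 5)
Your proposal is correct and matches the paper's (implicit) argument exactly: the proposition is a direct unfolding of the identities $P_{chr_S}=\phi_0\leftsquigarrow \mu_S$, $P_{chr_S}=P_{chr_W}\leftsquigarrow \nu_W$ and $P_{chr_W}=\phi_0\leftsquigarrow \mu_W$ stated just before it, using $\delta(G)=\sum_{\sim\in \eq^c[G]}G/\sim\otimes G\mid \sim$ and $\phi_0(H)=\lambda_0(H)X^{|V(H)|}$ together with $|V(G/\sim)|=\cl(\sim)$. Nothing is missing.
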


\begin{cor}\label{cor3.14}
If $G$ is a connected mixed graph, then $\mu_S(G)$ is the coefficient of $X$ in $P_{chr_S}(G)$
whereas $\mu_W(G)$ is the coefficient of $X$ in $P_{chr_W}(G)$. Moreover, $\lambda_0(G)$ is the coefficient of $X^{|V(G)|}$
in both $P_{chr_S}(G)$ and $P_{chr_W}(G)$.
\end{cor}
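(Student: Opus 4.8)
The plan is to read the two extreme coefficients directly off the expansions of Proposition \ref{prop3.13}. Since $\lambda_0$, $\mu_S$ and $\mu_W$ are characters, hence scalar-valued, in each of the formulas
\[
P_{chr_S}(G)=\sum_{\sim\in \eq^c[G]} \lambda_0(G/\sim)\,\mu_S(G\mid \sim)\,X^{\cl(\sim)},
\qquad
P_{chr_W}(G)=\sum_{\sim\in \eq^c[G]} \lambda_0(G/\sim)\,\mu_W(G\mid \sim)\,X^{\cl(\sim)}
\]
the indeterminate $X$ enters only through the exponent $\cl(\sim)$. Consequently the coefficient of $X^d$ is the sub-sum over those $\sim\in\eq^c[G]$ having exactly $d$ classes, and the whole statement reduces to identifying the relevant equivalence relations and evaluating the characters on the graphs $G/\sim$ and $G\mid\sim$ they produce.

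First I would treat the coefficient of $X$, that is $d=1$. An equivalence relation with a single class has that class equal to $V(G)$, and it lies in $\eq^c[G]$ precisely when $G$ is connected; this is exactly our hypothesis, so there is a unique contributing $\sim$. For it one has $G/\sim=\grun$ (all vertices are contracted to one, with no surviving edge or arc) and $G\mid\sim=G$ (no edge or arc is deleted). Since $\lambda_0(\grun)=\ell(\grun)/1!=1$ by Corollary \ref{cor3.5}, the coefficient of $X$ in $P_{chr_S}(G)$ equals $\lambda_0(\grun)\,\mu_S(G)=\mu_S(G)$, and likewise the coefficient of $X$ in $P_{chr_W}(G)$ equals $\mu_W(G)$.

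Next I would treat the coefficient of $X^{|V(G)|}$, that is $d=|V(G)|$. On a set of $|V(G)|$ elements the only equivalence with $|V(G)|$ classes is equality, which always lies in $\eq^c[G]$ since singletons are connected. For this $\sim$ one has $G/\sim=G$ and $G\mid\sim=\grun^{|V(G)|}$, the edgeless graph on $|V(G)|$ vertices. Thus the coefficient of $X^{|V(G)|}$ equals $\lambda_0(G)\,\mu_S(\grun^{|V(G)|})$ in the strong case and $\lambda_0(G)\,\mu_W(\grun^{|V(G)|})$ in the weak case, and it remains to see that both these character values are $1$. This last point is the only place needing care, and I would handle it by observing that $\grun$ is group-like for $\delta$ (indeed $\delta(\grun)=\grun\otimes\grun$) with $\epsilon_\delta(\grun)=1$; from $\lambda_0\star\mu_S=\epsilon_\delta$ and $\lambda_0(\grun)=1$ we get $\mu_S(\grun)=1$, and from $\mu_W=\mu_S\star\lambda_W$ together with $\lambda_W(\grun)=1$ we get $\mu_W(\grun)=1$. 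Multiplicativity of the characters then gives $\mu_S(\grun^{|V(G)|})=\mu_W(\grun^{|V(G)|})=1$, so the coefficient of $X^{|V(G)|}$ is $\lambda_0(G)$ in both polynomials, as claimed.
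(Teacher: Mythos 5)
Your proof is correct and follows essentially the same route as the paper: both read the coefficients of $X$ and of $X^{|V(G)|}$ off the expansions of Proposition \ref{prop3.13} by observing that connectedness makes the all-in-one-class relation the unique contributor to $X^1$ and that equality is the unique contributor to $X^{|V(G)|}$. The only cosmetic difference is that you compute $\lambda_0(\grun)=\mu_S(\grun)=\mu_W(\grun)=1$ explicitly via the group-likeness of $\grun$, where the paper invokes the coincidence of these characters with the counit $\epsilon_\delta$ on the degree-zero component of the gradation by $|V(G)|-\cc(G)$; these amount to the same verification.
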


\begin{proof}
As $G$ is connected, the unique element $\sim$ of $\eq^c[G]$ with $\cl(\sim)=1$ is  $\sim_L$, which has for only class $V(G)$. 
So the coefficient of $X$ in $P_{chr_S}(G)$ is, as $\lambda_0$ and $\epsilon$ coincide on $\calF_V[\bfG]_{\deg=0}$, equal to
\begin{align*}
\lambda_0(G/\sim_L) \mu_S(G\mid \sim_L)&=(\lambda_0\otimes \mu_S)\circ \rho_L(G)
=(\epsilon\otimes \mu_S)\circ \rho_L(G)=\mu_S(G).
\end{align*}
Similarly, the unique equivalence $\sim\in \eq^c[G]$ such that $\cl(\sim)=|V(G)|$ is $\sim_R$, which has for classes the connected components the singletons
(or in other words, $\sim_R$ is the equality of $V(G)$).
So the coefficient of $X^{|V(G)|}$ in $P_{chr_S}(G)$ is, as $\mu_S$ and $\epsilon$ coincide on $\calF_V[\bfG]_{\deg=0}$ equal to
\begin{align*}
\lambda_0(G/\sim_R) \mu_S(G\mid \sim_R)&=(\lambda_0\otimes \mu_S)\circ \rho_R(G)
=(\lambda_0\otimes \epsilon)\circ \rho_R(G)=\lambda_0(G).
\end{align*}
The proof is similar for the weak chromatic polynomial.
\end{proof}

\begin{remark}One can define an infinitesimal character $\mu'_S$ as follows: for any mixed graph $G$,
\[\mu'_S(G)=\begin{cases}
\mu_S(G)\mbox{ if $G$ is connected},\\
0\mbox{ otherwise}.
\end{cases}\]
This infinitesimal character is equal to $\ln(\epsilon_\delta)$ and is studied in \cite[Proposition 4.1]{Foissy40}.
It is closely related to the eulerian idempotent.
Consequently, as the coefficient of $X$ in $H_k(X)$ is $\dfrac{(-1)^{k+1}}{k}$ if $k\geq 1$, we obtain that for any connected mixed graph $G$,
\begin{align*}
\mu_S(G)&=\sum_{c\in \VPC(G)} \frac{(1)^{\max(c)+1}}{\max(c)}=\sum_{k=1}^\infty |\{c\in \VPC(G)\mid \max(c)=k\}| \frac{(-1)^{k+1}}{k}.
\end{align*}
When $G$ is a rooted tree, we recover Murua's coefficients \cite{Arizmendi2022,Murua2006}, which appear in the analysis of the
continuous Baker--Campbell--Hausdorff problem. 
\end{remark}

\begin{example} \label{ex3.2}
In order to improve the  readability, we shall write $\grdeuxoo$ if there are two arcs of opposite directions between two vertices. Examples of chromatic polynomials are given in Table \ref{table3}.
\end{example}

\begin{table}\label{table3}
\[\begin{array}{|c||c|c|c|c|c|c|}
\hline G&P_{chr_S}(G)&P_{chr_W}(G)&\lambda_0(G)&\mu_S(G)&\mu_W(G)&\nu_W(G)\\
\hline\hline \grdeux&X(X-1)&X(X-1)&1&-1&-1&0\\
\hline&&&&&&\\[-4mm]
\grdeuxo&\dfrac{X(X-1)}{2}&\dfrac{X(X+1)}{2}&\dfrac{1}{2}&-\dfrac{1}{2}&\dfrac{1}{2}&-1\\[2mm]
\hline \grdeuxoo&0&X&1&0&1&-1\\
\hline &&&&&&\\[-4mm]
 \grtrois&X(X-1)^2&X(X-1)^2&1&1&1&0\\
\hline&&&&&&\\[-4mm]
\grtroisoe &\dfrac{X(X-1)^2}{2}&\dfrac{X(X+1)(X-1)}{2}&\dfrac{1}{2}&\dfrac{1}{2}&-\dfrac{1}{2}&0\\[2mm]
\hline&&&&&&\\[-4mm]
\grtroiseo &\dfrac{X(X-1)^2}{2}&\dfrac{X(X+1)(X-1)}{2}&\dfrac{1}{2}&\dfrac{1}{2}&-\dfrac{1}{2}&0\\
\hline &&&&&&\\[-4mm]
\grtroisooe&0&X(X-1)&1&0&-1&0\\
\hline&&&&&&\\[-4mm]
 \grtroisoo&\dfrac{X(X-1)(X-2)}{6}&\dfrac{X(X+1)(X+2)}{6}&\dfrac{1}{6}&\dfrac{1}{3}&\dfrac{1}{3}&1\\[2mm]
 \hline&&&&&&\\[-4mm]
 \grtroisoodeux&\dfrac{X(2X-1)(X-1)}{6}&\dfrac{X(2X+1)(X+1)}{6}&\dfrac{1}{3}&\dfrac{1}{6}&\dfrac{1}{6}&1\\[2mm]
\hline&&&&&&\\[-4mm]
\grtroisooo&0&\dfrac{X(X+1)}{2}&\dfrac{1}{2}&0&\dfrac{1}{2}&1\\[2mm]
\hline&&&&&&\\[-4mm]\grtroisooodeux&0&\dfrac{X(X+1)}{2}&\dfrac{1}{2}&0&\dfrac{1}{2}&1\\
\hline &&&&&&\\[-4mm]
\grtroisoooo&0&X&1&0&1&1\\
\hline &&&&&&\\[-4mm]
\grcompletun&X(X-1)(X-2)&X(X-1)(X-2)&1&2&2&0\\
\hline &&&&&&\\[-4mm]
\grcompletdeux&\dfrac{X(X-1)(X-2)}{2}&\dfrac{X^2(X-1)}{2}&\dfrac{1}{2}&1&0&0\\
\hline &&&&&&\\[-4mm]
\grcomplettrois&\dfrac{X(X-1)(X-2)}{3}&\dfrac{X(X-1)(X+1)}{3}&\dfrac{1}{3}&\dfrac{2}{3}&-\dfrac{1}{3}&2\\
\hline &&&&&&\\[-4mm]
\grcompletquatre&\dfrac{X(X-1)(X-2)}{6}&\dfrac{X(X-1)(X+4)}{6}&\dfrac{1}{6}&\dfrac{1}{3}&-\dfrac{2}{3}&2\\
\hline &&&&&&\\[-4mm]
\grcompletcinq&0&X&1&0&1&3\\
\hline &&&&&&\\[-4mm]
\grcompletsix&\dfrac{X(X-1)(X-2)}{6}&\dfrac{X(X+1)(X+2)}{6}&\dfrac{1}{6}&\dfrac{1}{3}&\dfrac{1}{3}&3\\
\hline \end{array}\]
\caption{Examples of chromatic polynomials}
\end{table}

\begin{example}
Let us consider again the graph $G_n$ of Example \ref{ex3.1}. We obtain
\begin{align*}
\mu_S(G_n)&=\frac{(-1)^n}{n!},& \mu_W(G_n)&=\frac{(n-1)!}{n!}-1=-\frac{n-1}{n},&
\lambda_0(G_n)&=\frac{1}{n!}.
\end{align*}
\end{example}

Here is an example of application. Recall that $\varpi_0:\bfG\longrightarrow \bfG_{ac}$, defined in (\ref{eqvarpi0}), is the projection vanishing on non acyclic graphs.

\begin{prop}
\label{prop3.15}
\begin{enumerate}
\item $P_{chr_S}\circ \varpi_0=P_{chr_S}$.
\item Let $G$ be an acyclic mixed graph. Then $P_{chr_S}(G)$ is of degree $|V(G)|$ and its leading term is $\ell(G)$. 
\end{enumerate}
\end{prop}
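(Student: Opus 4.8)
The plan is to read everything off the explicit formula $P_{chr_S}(G)=\sum_{c\in\VPC(G)}H_{\max(c)}$ of Proposition~\ref{prop3.2}, using that each Hilbert polynomial $H_k$ has degree exactly $k$ with positive leading coefficient $1/k!$, so that terms of distinct degree never interfere and no cancellation can occur at the top.

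For the first assertion I would argue on a basis of mixed graphs. If $G$ is acyclic then $\varpi_0(G)=G$ and there is nothing to check, so the only point is that $P_{chr_S}(G)=0$ whenever $G$ is not acyclic, which matches $\varpi_0(G)=0$. Indeed a non-acyclic $G$ contains an oriented cycle $x_0\arc{G}\cdots\arc{G}x_m=x_0$ with $m\geq 2$, and any $c\in\VPC(G)$ would force $c(x_0)<\cdots<c(x_m)=c(x_0)$, which is absurd; hence $\VPC(G)=\emptyset$ and $P_{chr_S}(G)=0$. Extending by linearity gives $P_{chr_S}\circ\varpi_0=P_{chr_S}$. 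One could equally invoke uniqueness: $\varpi_0$ is a morphism of double bialgebras, so $P_{chr_S}\circ\varpi_0$ is again a double bialgebra morphism from $\calF[\bfG]$ to $\K[X]$ and must coincide with the unique such morphism $P_{chr_S}$ of Proposition~\ref{prop3.2}.

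For the second assertion, fix an acyclic $G$ with $|V(G)|=n$. A packed coloration $c$ is surjective onto $[\max(c)]$, so $\max(c)\leq n$ and the formula gives $\deg P_{chr_S}(G)\leq n$. A coloration with $\max(c)=n$ is precisely a bijection $c:V(G)\to[n]$, for which the edge condition $c(x)\neq c(y)$ is automatic; these are exactly the elements of $L_n(G)$, of which there are $\ell(G)$ by the notation preceding Corollary~\ref{cor3.5}. Grouping the sum by the value of $\max(c)$, the coefficient of the top Hilbert polynomial $H_n$ equals $\ell(G)$, so the coefficient of $X^n$ equals $\ell(G)/n!$, in agreement with Corollary~\ref{cor3.14}.

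It then remains to show that this top coefficient does not vanish, i.e. $\ell(G)\geq 1$, and this is where acyclicity enters. The arcs of $G$ form a directed acyclic graph, whose arc relation, after transitive closure, becomes a strict partial order on $V(G)$; any linear (topological) extension of it yields a bijection $c:V(G)\to[n]$ with $x\arc{G}y\Rightarrow c(x)<c(y)$, that is, an element of $L_n(G)$. Hence $\ell(G)\geq 1$, there is no cancellation at degree $n$ (the terms $H_k$ with $k<n$ contribute nothing to $X^n$), and $\deg P_{chr_S}(G)=n$ with leading term $\ell(G)$ in the Hilbert basis, equivalently leading coefficient $\ell(G)/n!$. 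The computations are routine; the only genuine content is the equivalence between $\ell(G)\geq 1$ and acyclicity of $G$, namely the existence of a topological ordering of a DAG, and that is the mild obstacle to phrase cleanly.
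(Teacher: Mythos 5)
Your proof is correct, but for both items it follows a more direct combinatorial route than the paper. For item 1, the paper does not verify anything on basis elements: it observes that $P_{chr_S}\circ \varpi_0$ and $P_{chr_S}$ are both bialgebra morphisms to $\K[X]$ with the same composition with $\epsilon_\delta$, and concludes by the uniqueness statement (the bijection between such morphisms and characters) --- essentially the alternative you mention in passing. Your primary argument, that $\VPC(G)=\emptyset$ for a non-acyclic $G$ because a valid coloration would force $c(x_0)<\dots<c(x_m)=c(x_0)$ along an oriented cycle, is a perfectly sound and more elementary substitute. For item 2, the paper does not work with the formula $P_{chr_S}(G)=\sum_{c\in\VPC(G)}H_{\max(c)}$; it instead expands $P_{chr_S}=\phi_0\leftsquigarrow \mu_S$ over $\eq^c[G]$, bounds the degree by $|V(G)|$ since each term contributes $X^{\cl(\sim)}$, and isolates the top term as the one coming from $\sim$ equal to equality, where $\mu_S(G\mid\sim)=1$. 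Your grouping of the sum by $\max(c)$ and the identification of the colorations with $\max(c)=|V(G)|$ with $L_{|V(G)|}(G)$ achieves the same thing without invoking the character $\mu_S$ at all, which is a mild simplification. Both arguments ultimately rest on the same nontrivial fact, namely $\ell(G)\geq 1$ for acyclic $G$; the paper asserts this in one line, while you justify it via the existence of a topological ordering of the transitive closure of the arc relation, which is the right justification. One further point in your favour: you correctly record the leading coefficient of $P_{chr_S}(G)$ as $\ell(G)/|V(G)|!$ (equivalently, coefficient $\ell(G)$ on $H_{|V(G)|}$), which is the reading consistent with Corollary \ref{cor3.14} ($\lambda_0(G)=\ell(G)/|V(G)|!$ is the coefficient of $X^{|V(G)|}$); the paper's own proof drops the factor $1/\cl(\sim)!$ coming from $\phi_0(G/\sim)=\lambda_0(G/\sim)X^{\cl(\sim)}$ and states the leading coefficient as $\ell(G)$, so your version is the more careful one.
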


\begin{proof}
1. Note that $P_{chr_S}\circ \varpi_0$ and $P_{chr_S}$ are bialgebra morphisms from $\calF[\bfG]$ to $\K[X]$. Moreover,
\begin{align*}
\epsilon_\delta\circ P_{chr_S}\circ \varpi_0&=\epsilon_\delta\circ \varpi_0=\epsilon_\delta=\epsilon_\delta\circ P_{chr_S}.
\end{align*}
By unicity,  $P_{chr_S}\circ \varpi_0=P_{chr_S}$. \\

2. Let $G$ be an acyclic graph. As $P_{chr_S}=P_0\leftsquigarrow \mu_S$,
\[P_{chr_S}(G)=\sum_{\sim\in \eq_c[G]} \mu_S (G\mid \sim)\ell(G/\sim)X^{\cl(\sim)},\]
which implies that $\deg(P_{chr_S}(G))\leqslant |V(G)|$. If $\sim$ is the equality of $V(G)$,
then $G\mid \sim$ is a graph with no edge, so $\mu_S(G\mid \sim)=1$. We obtain that
\[P_{chr_S}(G)=\ell(G)X^{|V(G)|}+\mbox{terms of degree }<|V(G)|.\]
As $G$ has no cycle, $\ell(G)\neq 0$, so $\deg(P_{chr_S}(G))=|V(G)|$. 
\end{proof}

\section{From mixed graphs to acyclic oriented graphs}

\subsection{A double bialgebra epimorphism}

\begin{defi}
Let $G$ a mixed graph. An \emph{orientation} of $G$ is an oriented graph $H$ with $V(G)=V(H)$ and 
$A(H)=A(G)\sqcup E'$, where $E'$ is a set of arcs in bijection with $E(G)$, through a bijection respecting the extremities.
Such an orientation of $G$ is \emph{acyclic} if $H$ is an acyclic oriented graph. We denote by $O_{ac}(G)$ the set of acyclic orientations of $G$. 
\end{defi}

\begin{theo}\label{theo4.2}
Let $G$ be a mixed graph. We put
\[\Theta(G)=\sum_{G'\in \calO_{ac}(G)}G',\]
with the usual convention that this sum is $0$ if $\calO_{ac}(G)$ is empty. Then $\Theta$ is a double twisted bialgebra morphism
from $\bfG$ to $\bfG_{aco}$. 
\end{theo}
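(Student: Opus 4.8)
The plan is to verify that $\Theta$ commutes with the product $m$, with the separation coproduct $\Delta$, and with the contraction-extraction coproduct $\delta$ (compatibility with units and counits being immediate), in each case by matching the acyclic orientations on the two sides through an explicit bijection. A remark used everywhere is that an orientation of a mixed graph merely turns its undirected edges into arcs, so it leaves unchanged which pairs of vertices are joined; in particular the connectivity of any induced subgraph, and hence whether the classes of an equivalence $\sim$ are connected, is the same for $G$ and for any orientation $G'$ of $G$. Compatibility with $m$ is then immediate: since $GH$ is a disjoint union with no edge or arc between $V(G)$ and $V(H)$, orienting its edges means orienting those of $G$ and those of $H$ independently, and any oriented cycle stays within one part; thus restriction gives a bijection $\calO_{ac}(GH)\cong\calO_{ac}(G)\times\calO_{ac}(H)$, whence $\Theta(GH)=\Theta(G)\Theta(H)$, and the empty graph is sent to the empty graph.

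For $\Delta_{X,Y}$ with $G\in\calG[X\sqcup Y]$, the starting observation is that an orientation $G'$ of $G$ has $Y$ as an ideal if and only if $Y$ is an ideal of $G$ and every edge of $G$ between $X$ and $Y$ is oriented from $X$ into $Y$; indeed the only arcs of $G'$ that could leave $Y$ arise from such edges, since an ideal of $G$ has no arc from $Y$ to $X$. If $Y$ is not an ideal of $G$, both $\Delta_{X,Y}(\Theta(G))$ and $(\Theta\otimes\Theta)(\Delta_{X,Y}(G))$ vanish. If $Y$ is an ideal of $G$, orienting all cross edges into $Y$ makes every connection between $X$ and $Y$ point from $X$ to $Y$, so no oriented cycle can cross; such a $G'$ is therefore acyclic precisely when $G'_{\mid X}$ and $G'_{\mid Y}$ are, and $G'\mapsto(G'_{\mid X},G'_{\mid Y})$ is a bijection onto $\calO_{ac}(G_{\mid X})\times\calO_{ac}(G_{\mid Y})$. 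Summing yields $\Delta_{X,Y}(\Theta(G))=\Theta(G_{\mid X})\otimes\Theta(G_{\mid Y})=(\Theta\otimes\Theta)\circ\Delta_{X,Y}(G)$.

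The compatibility with $\delta$ is the core of the argument. Fix $\sim$. By the connectivity remark, $\sim\in\eq^c[G]$ iff $\sim\in\eq^c[G']$ for every orientation $G'$, so when $\sim\notin\eq^c[G]$ all terms vanish on both sides. Assume $\sim\in\eq^c[G]$. The key lemma I would establish is a factorization: an orientation $G'$ of $G$ satisfies ``$G'$ acyclic and $G'/\sim$ acyclic'' if and only if each restriction $G'_{\mid C}$ to a class $C$ is acyclic and $G'/\sim$ is acyclic. The delicate implication is that the latter two conditions force $G'$ itself to be acyclic: projecting a hypothetical oriented cycle of $G'$ onto $G'/\sim$ gives a closed directed walk, which in the acyclic graph $G'/\sim$ must be constant, so the cycle is confined to a single class $C$ and contradicts acyclicity of $G'_{\mid C}$. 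Granting the lemma, the between-class part of the orientation determines $G'/\sim$ while the within-class part determines $G'\mid\sim=\bigsqcup_C G'_{\mid C}$; these choices are independent, giving a bijection
\[\{G'\in\calO_{ac}(G):G'/\sim\ \text{acyclic}\}\ \cong\ \calO_{ac}(G/\sim)\times\calO_{ac}(G\mid\sim),\]
and summing over it yields $\delta_\sim(\Theta(G))=\Theta(G/\sim)\otimes\Theta(G\mid\sim)=(\Theta\otimes\Theta)\circ\delta_\sim(G)$.

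The main obstacle is to make this last bijection rigorous when several parallel edges of $G$, or an edge together with an arc, connect the same pair of classes. The point to check is that requiring $G'/\sim$ to be acyclic is exactly what forces all such parallel connections between two classes to be oriented coherently — forbidding a $2$-cycle between classes — so that $G'/\sim$ really is an acyclic orientation of $G/\sim$, and that an acyclic orientation of $G/\sim$ then lifts uniquely to the between-class edges of $G$. This is where the ``priority to the oriented ones'' built into the definition of $G/\sim$ must be reconciled with the contraction of the fully oriented graph $G'$, and it is the only step requiring genuine care. Once it is settled, the compatibilities with the counits $\epsilon_\delta$ and $\varepsilon_\Delta$ are routine, and $\Theta$ is a morphism of double twisted bialgebras.
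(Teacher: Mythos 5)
Your proposal is correct and follows essentially the same route as the paper: the same three bijections (restriction for the product, cutting along the ideal for $\Delta$, and $G'\mapsto(G'/\sim,\,G'\mid\sim)$ for $\delta$), with the same key step of projecting a hypothetical oriented cycle onto the acyclic contraction to confine it to a single class. The ``obstacle'' you flag at the end is resolved exactly as you anticipate --- acyclicity of $G'/\sim$ forbids a $2$-cycle between two classes and hence forces all parallel connections between them to be oriented coherently, which is also how the paper disposes of it.
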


\begin{proof}
Firstly, $\Theta$ is indeed a species morphism from $\bfG$ to $\bfG_{aco}$. 
Let $G$ and $H$ be two mixed graphs, $G'$ and $H'$ be orientations of $G$ and $H$.
Then $G'H'$ is an acyclic orientation of $GH$ if, and only if, $G'$ and $H'$ are acyclic orientations of $G$ and $H$. 
This implies directly that $\Theta(GH)=\Theta(G)\Theta(H)$. Therefore, $\Theta$ is a twisted algebra morphism. \\

Let $G\in \bfG[I\sqcup J]$ be a mixed graph. 
If $J$ is not an ideal of $G$, then $\Delta_{I,J}(G)=0$. Moreover, for any orientation $G'$ of $G$,
$J$ is not an ideal of $G'$, so $\Delta_{I,J}(G')=0$. Hence, in this case,
\[(\Theta\otimes \Theta)\circ \Delta_{I,J}(G)=\Delta_{I,J}\circ \Theta(G)=0.\]
We now assume that $J$ is an ideal of $G$. Then
\begin{align*}
(\Theta\otimes \Theta)\circ \Delta_{I,J}(G)&=\sum_{(G',G'')\in \calO_{ac}(G_{\mid I})\times \calO_{ac}(G_{\mid J})}
G'\otimes G'',\\
\Delta_{I,J}\circ \Theta(G)&=\sum_{\substack{H\in \calO_{ac}(G),\\ \mbox{\scriptsize $J$ ideal of $H$}}}
H_{\mid I}\otimes H_{\mid J}. 
\end{align*}
We put
\begin{align*}
A&=\calO_{ac}(G_{\mid I})\times \calO_{ac}(G_{\mid J}),&
B&=\{H\in \calO_{ac}(G)\mid \mbox{$J$ ideal of $H$}\},
\end{align*}
and we consider the map
\begin{align*}
\psi&:\left\{\begin{array}{rcl}
B&\longrightarrow&A\\
H&\longrightarrow&(H_{\mid I},H_{\mid J}).
\end{array}\right.
\end{align*}
This is obviously well-defined. We now consider the map $\psi':A\longrightarrow B$,
sending any pair $(G',G'')$ to an orientation $\psi(G',G'')=H$ of $G$ defined in this way:
for any edge  $\{x,y\}$ of $G$,
\begin{itemize}
\item If $x,y\in I$, then orient this edge as in $G'$.
\item If $x,y\in J$, then orient this edge as in $G''$.
\item If $x\in I$ and $y\in J$, then orient this edge from $x$ to $y$.
\item  If $x\in J$ and $y\in I$, then orient this edge from $y$ to $x$.
\end{itemize}
As there is no arc in $H$ from $J$ to $I$, $J$ is an ideal of $H$. Moreover, as $G'$ and $G''$ are acyclic,
$H$ is acyclic: $\psi'$ is well-defined. We immediately obtain that $\psi'\circ \psi=\id_B$ 
and $\psi\circ \psi'=\id_A$, so $\psi$ and $\psi'$ are bijections. Therefore,
\begin{align*}
(\Theta\otimes \Theta)\circ \Delta_{I,J}(G)&=\sum_{(G',G'))\in A}G'\otimes G''
=\sum_{H\in B}H_{\mid I}\otimes H_{\mid J}=\Delta_{I,J}\circ \Theta(G).
\end{align*}
So $\Theta:(\bfG,m,\Delta)\longrightarrow(\bfG_{aco},m,\Delta)$ is a twisted bialgebra morphism.\\

Let $G\in \bfG[I\sqcup J]$ be a mixed graph and $\sim \in \eq(G)$. If $H$ is an orientation of $G$,
as the paths in $G$ and $H$ are the same, $\sim\in \eq_\sim(G)$ if, and only if, $\sim\in \eq_\sim(H)$. Hence,
if $\sim\notin \eq_c(G)$,
\[\delta_\sim\circ \Theta(G)=(\Theta\otimes\Theta)\circ \delta_\sim(G)=0.\]
Let us now assume that $\sim\in \eq_c(G)$. Then
\begin{align*}
\delta_\sim\circ \Theta(G)&=\sum_{\substack{H\in \calO_{ac}(G),\\ \mbox{\scriptsize $H/\sim$ acyclic}}}
H/\sim\otimes H\mid \sim,\\
(\Theta\otimes \Theta)\circ \delta_\sim(G)&=\sum_{(G',G'')\in \calO_{ac}(G/\sim)\times \calO_{ac}(G\mid\sim)}
G'\otimes G''.
\end{align*}
We put
\begin{align*}
C&=\calO_{ac}(G/\sim)\times \calO_{ac}(G\mid\sim),&D&=\{H\in \calO_{ac}(G)\mid  \mbox{$H/\sim$ acyclic}\}.
\end{align*}
If $H\in D$, then $H/\sim$ is an acyclic orientation of $G/\sim$ by definition of $D$
and $H\mid \sim$ is an acyclic orientation of $G\mid\sim$ by restriction. This defines a map
\[\phi:\left\{\begin{array}{rcl}
C&\longrightarrow&D\\
H&\longrightarrow&(H/\sim,H\mid \sim).
\end{array}\right.\]
Let us now consider $(G',G'')\in C$. We define an orientation of $G$ in the following way:
if $\{x,y\}$ is an edge of $G$,
\begin{itemize}
\item If $x\sim y$, then $\{x,y\}$ is an edge of $G\mid \sim$: orient it as in $G''$.
\item Otherwise, $\{\overline{x},\overline{y}\}$ is an edge or an arc of $G/\sim$:
orient $\{x,y\}$ as $\{\overline{x},\overline{y}\}$ in $G'$: as $G'$ is acyclic, this is unambiguous. 
\end{itemize}
Note that $H/\sim=G'$ and $H\mid \sim=G''$ by construction. Moreover, this is an acyclic orientation of $G$: 
if $x_1\rightarrow\ldots \rightarrow x_k\rightarrow x_1$ is a cycle in $H$,
as $G'$ is acyclic, necessarily $x_1\sim\ldots \sim x_k$, so this is a cycle in $G''$: as $G''$ is acyclic, this is not possible.
Moreover, $H/\sim=G'$ is acyclic, so this defines a map $\phi':D\longrightarrow C$ such that $\phi\circ \phi'=\id_D$. 

Let $H\in C$. We put $H'=\phi'\circ \phi(H)$. Let $(x,y)$ be an arc of $H$. If $x\sim y$,
then $(x,y)$ is an arc of $H\mid \sim$, so is an arc of $H'$.
Otherwise, $(\overline{x},\overline{y})$ is an arc of $H/\sim=H'/\sim$. 
If $(y,x)$ is an arc of $H'$, then $\overline{x}\rightarrow\overline{y}\rightarrow \overline{x}$ is a cycle in $H'/\sim$,
so $H/\sim$ is not acyclic: this is a contradiction. So $(x,y)$ is an arc of $H'$. Therefore, $H$ and $H'$ have the same arcs,
so are equal. We proved that $\phi'\circ \phi=\id_C$, so $\phi$ is a bijection. We obtain
\begin{align*}
\delta_\sim\circ \Theta(G)&=\sum_{H\in D}H/\sim\otimes H\mid \sim
=\sum_{(G',G'')\in C}G'\otimes G''=(\Theta\otimes \Theta)\circ \delta_\sim(G).
\end{align*}
So $\Theta$ is compatible with $\delta$. It is obviously compatible with the unit and both counits $\varepsilon_\Delta$ and $\epsilon_\delta$.
\end{proof}

\begin{cor} \label{cor4.3}
For any mixed graph $G$,
\[P_{chr_S}(G)=\sum_{H\in \calO_{ac}(G)}P_{chr_S}(H).\]
\end{cor}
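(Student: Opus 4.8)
The plan is to obtain the corollary simply by evaluating, on the generator $G$, the identity of double bialgebra morphisms $P_{chr_S}\circ \calF[\Theta]=P_{chr_S}$ that was just derived; almost all of the work has already been carried out in Theorem \ref{theo4.2} and in the uniqueness clause of Proposition \ref{prop3.2}.

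First I would recall the chain of reductions. By Theorem \ref{theo4.2}, $\Theta:\bfG\longrightarrow\bfG_{aco}$ is a double twisted bialgebra morphism, so applying the bosonic Fock functor $\calF$ and using its functoriality produces a double bialgebra morphism $\calF[\Theta]:\calF[\bfG]\longrightarrow \calF[\bfG_{aco}]$. Since $P_{chr_S}:\calF[\bfG_{aco}]\longrightarrow \K[X]$ is itself a double bialgebra morphism (Proposition \ref{prop3.2} specialised to acyclic oriented graphs), the composite $P_{chr_S}\circ \calF[\Theta]:\calF[\bfG]\longrightarrow \K[X]$ is again a double bialgebra morphism.

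Next I would invoke the uniqueness part of Proposition \ref{prop3.2}: there is exactly one double bialgebra morphism from $\calF[\bfG]$ to $\K[X]$. Both $P_{chr_S}$ and $P_{chr_S}\circ \calF[\Theta]$ are such morphisms, hence they agree, which is exactly the identity $P_{chr_S}\circ \calF[\Theta]=P_{chr_S}$ displayed above. Evaluating this on the mixed graph $G$ viewed inside $\calF[\bfG]$, and using that $\calF[\Theta](G)=\sum_{H\in \calO_{ac}(G)}H$ by the very definition of $\Theta$, the linearity of $P_{chr_S}$ yields
\[P_{chr_S}(G)=P_{chr_S}\!\left(\sum_{H\in \calO_{ac}(G)}H\right)=\sum_{H\in \calO_{ac}(G)}P_{chr_S}(H),\]
as claimed.

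The corollary therefore carries no genuine obstacle of its own: the substantive content is the compatibility of $\Theta$ with both coproducts $\Delta$ and $\delta$, established in Theorem \ref{theo4.2} through the explicit bijections between acyclic orientations and pairs of acyclic orientations, together with the general uniqueness theorem for double bialgebra morphisms into $\K[X]$. The only points I would take care to verify are that the Fock functor indeed sends a double twisted bialgebra morphism to a double bialgebra morphism, and that the uniqueness statement of Proposition \ref{prop3.2} applies verbatim to $\calF[\bfG]$, which it does since $(\calF[\bfG],\Delta)$ is connected.
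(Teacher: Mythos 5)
Your argument is correct and coincides with the paper's own: the text preceding the corollary derives $P_{chr_S}\circ \calF[\Theta]=P_{chr_S}$ by applying the Fock functor to $\Theta$, composing with $P_{chr_S}$, and invoking the uniqueness of the double bialgebra morphism to $\K[X]$, after which the corollary is just the evaluation of this identity on $G$ together with $\calF[\Theta](G)=\sum_{H\in\calO_{ac}(G)}H$. Nothing further is needed.
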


\begin{proof}

By functoriality, $\calF[\Theta]:\calF[\bfG]\longrightarrow\calF[\bfG_{aco}]$ is a double bialgebra morphism.
By composition, $P_{chr_S}\circ \calF[\Theta]:\calF[\bfG]\longrightarrow\K[X]$ is a double bialgebra morphism.
By unicity of such a morphism, 
\[P_{chr_S}\circ \calF[\Theta]=P_{chr_S}. \qedhere\]
\end{proof}

\subsection{Erhahrt polynomials for mixed graphs}

\begin{prop}\label{prop4.4}
Let $q\in \K$. The two following maps are characters of $\calF[\bfG]$:
\begin{align*}
\ehr^{(q)}_{str}&:\left\{\begin{array}{rcl}
\calF[\bfG]&\longrightarrow&\K\\
G&\longmapsto&\begin{cases}
q^{|V(G)|}\mbox{ if }A(G)=\emptyset,\\
0\mbox{ otherwise},
\end{cases}
\end{array}\right.&
\ehr^{(q)}&:\left\{\begin{array}{rcl}
\calF[\bfG]&\longrightarrow&\K\\
G&\longmapsto&q^{|V(G)|},
\end{array}\right.
\end{align*}
with the convention $q^0=1$ even if $q=0$. 
Let $\Ehr^{(q)}_{str}$ and by $\Ehr^{(q)}$ be Hopf algebra morphisms from 
$(\calF[\bfG],m,\Delta)$ to $(\K[X],m,\Delta)$ given by
\begin{align*}
\Ehr^{(q)}_{str}&=\theta\left(\ehr^{(q)}_{str}\right)=P_{chr_S}\leftsquigarrow \ehr^{(q)}_{str},\\
\Ehr^{(q)}&=\theta\left(\ehr^{(q)}\right)=P_{chr_S}\leftsquigarrow \ehr^{(q)}.
\end{align*}
where $\theta$ is defined in (\ref{eqtheta}).

\begin{align*}
\epsilon_\delta \circ \Ehr_{str}^{(q)}&=\ehr_{str}^{(q)},&
\epsilon_\delta \circ \Ehr^{(q)}&=\ehr^{(q)}.
\end{align*}
Then, for any $n\in \N$, for any mixed graph $G$,
\begin{align*}
\Ehr_{str}^{(q)}(G)(n)&=q^{|V(G)|}|\{f:V(G)\longrightarrow [n]\mid \forall x,y\in V(G),\: 
x\arc{G} y\Longrightarrow f(x)<f(y)\}|,\\
\Ehr^{(q)}(G)(n)&=q^{|V(G)|}|\{f:V(G)\longrightarrow [n]\mid \forall x,y\in V(G),\: 
x\arc{G} y\Longrightarrow f(x)\leqslant f(y)\}|.
\end{align*}
Moreover, $\Ehr^{(q)}_{str}\circ \varpi_0=\Ehr^{(q)}_{str}$ (recall that $\varpi_0$ is defined in (\ref{eqvarpi0})).
\end{prop}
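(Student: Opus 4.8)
The plan is to dispatch the four assertions in turn, leaning on the character/morphism correspondence recalled in the introduction. First I would check that $\ehr^{(q)}_{str}$ and $\ehr^{(q)}$ are characters of $(\calF[\bfG],m)$. Since the product is disjoint union, $V(GH)=V(G)\sqcup V(H)$ and $A(GH)=A(G)\sqcup A(H)$, so $|V(GH)|=|V(G)|+|V(H)|$ and $A(GH)=\emptyset$ if and only if $A(G)=A(H)=\emptyset$; hence both maps are multiplicative, and both send the empty graph $1$ to $q^{0}=1$. This gives the assertion that they are characters. For the existence and uniqueness of $\Ehr^{(q)}_{str}$ and $\Ehr^{(q)}$, I would simply invoke the bijection between $\Char(\calF[\bfG])$ and $\Endo_B(\calF[\bfG],\K[X])$ from \cite{Foissy40} recalled in the introduction: a character $\lambda$ determines a unique bialgebra morphism $\phi$ with $\epsilon_\delta\circ\phi=\lambda$, and since $(\calF[\bfG],m,\Delta)$ is connected graded this morphism is automatically a Hopf algebra morphism.

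The core of the proof is the explicit description, which I would obtain exactly as in the derivation preceding Proposition \ref{prop3.2}. For a character $\lambda$, the associated morphism is
\[\phi(G)=\sum_{k\geq 0}\lambda^{\otimes k}\circ\tdelta^{(k-1)}(G)\,H_k(X),\qquad
\tdelta^{(k-1)}(G)=\sum_{c\in L_k(G)}G_{\mid c^{-1}(1)}\otimes\cdots\otimes G_{\mid c^{-1}(k)}.\]
Taking $\lambda=\ehr^{(q)}_{str}$, the factor $\ehr^{(q)}_{str}(G_{\mid c^{-1}(i)})$ equals $q^{|c^{-1}(i)|}$ when no arc of $G$ lies inside the class $c^{-1}(i)$ and $0$ otherwise; multiplying over $i$ gives $q^{|V(G)|}$ precisely when $c$ is strictly increasing along arcs, and $0$ otherwise. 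Taking $\lambda=\ehr^{(q)}$, every factor equals $q^{|c^{-1}(i)|}$, so the product is always $q^{|V(G)|}$ and one is left with $|L_k(G)|$. Thus $\Ehr^{(q)}_{str}(G)$ (resp. $\Ehr^{(q)}(G)$) is $q^{|V(G)|}$ times $\sum_k s_k H_k(X)$, where $s_k$ is the number of surjections $c:V(G)\longrightarrow[k]$ that are strictly (resp. weakly) increasing along arcs. Evaluating at $n\in\N$ with $H_k(n)=\binom{n}{k}$, I would use the standard packing bijection: a map $f:V(G)\to[n]$ satisfying the arc condition factors uniquely as a surjection onto its image, order-isomorphic to some $[k]$, followed by a strictly increasing injection $[k]\hookrightarrow[n]$, the latter counted by $\binom{n}{k}$ and the arc condition being preserved. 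This rewrites $\sum_k s_k\binom{n}{k}$ as the number of all maps $f:V(G)\to[n]$ with $x\arc{G}y\Rightarrow f(x)<f(y)$ (resp. $\leq$), which are the two displayed formulas.

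Finally, for $\Ehr^{(q)}_{str}\circ\varpi_0=\Ehr^{(q)}_{str}$ I would argue exactly as in Proposition \ref{prop3.15}(1). Both $\Ehr^{(q)}_{str}\circ\varpi_0$ and $\Ehr^{(q)}_{str}$ are bialgebra morphisms from $(\calF[\bfG],m,\Delta)$ to $(\K[X],m,\Delta)$, since $\varpi_0$ is a bialgebra morphism for $(m,\Delta)$. By the uniqueness in the correspondence above it suffices to compare their images under $\epsilon_\delta\circ(-)$: the right-hand side gives $\ehr^{(q)}_{str}$ by definition, while the left-hand side gives $\ehr^{(q)}_{str}\circ\varpi_0$. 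These coincide because a non-acyclic mixed graph contains an oriented cycle and hence an arc, so $\ehr^{(q)}_{str}$ already vanishes on it, exactly as $\ehr^{(q)}_{str}\circ\varpi_0$ does, while $\varpi_0$ is the identity on acyclic graphs. Uniqueness then forces the two morphisms to be equal.

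The character check and the two invocations of uniqueness are routine; the genuine work is the middle paragraph. The point needing care there is the bookkeeping of the arc conditions, namely that deleting arcs within the colour classes upgrades the weak inequality $c(x)\leq c(y)$ of $L_k(G)$ to the strict $c(x)<c(y)$ for $\ehr^{(q)}_{str}$ while $\ehr^{(q)}$ adds no constraint, together with the packing bijection that turns surjection counts weighted by $H_k(n)=\binom{n}{k}$ into counts of arbitrary maps into $[n]$.
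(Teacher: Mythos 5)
Your proposal is correct and follows essentially the same route as the paper: multiplicativity of the two characters under disjoint union, the iterated-coproduct formula $\sum_k \lambda^{\otimes k}\circ\tdelta^{(k-1)}(G)\,H_k(X)$ for the associated morphism with the same bookkeeping of which colour classes may contain arcs (upgrading $c(x)\leq c(y)$ to $c(x)<c(y)$ in the strict case), and the reduction of $\Ehr^{(q)}_{str}\circ\varpi_0=\Ehr^{(q)}_{str}$ to the equality of characters $\ehr^{(q)}_{str}\circ\varpi_0=\ehr^{(q)}_{str}$ via uniqueness. The only difference is that you make explicit the final packing bijection converting $\sum_k s_k\binom{n}{k}$ into the count of arbitrary maps $V(G)\to[n]$, a step the paper leaves implicit with ``which implies the announced result.''
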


From now, we shall write simply $\ehr_{str}$, $\ehr$, $\Ehr_{str}$ and $\Ehr$ for $\ehr_{str}^{(1)}$, 
$\ehr^{(1)}$, $\Ehr_{str}^{(1)}$ and $\Ehr^{(1)}$. 

\begin{proof}
The maps $\ehr^{(q)}$ and $\ehr_{str}^{(q)}$ are obviously characters of $\calF[\bfG]$. 
Let $G$ be a mixed graph. For $k\geq 1$, we denote by $S_k(G)$ the set of surjective  maps $f:V(G)\longrightarrow [k]$
such that for any $x,y\in V(G)$,
\[x\arc{G} y\Longrightarrow f(x)\leqslant f(y).\]
Then
\begin{align*}
\Ehr_{str}^{(q)}(G)&=\sum_{k=1}^\infty \sum_{f\in S_k(G)} \ehr_{str}^{(q)}(G_{\mid f^{-1}(1)})\ldots 
\ehr_{str}^{(q)}(G_{\mid f^{-1}(k)}) H_k(X)\\
&=\sum_{k=1}^\infty \sum_{f\in S_k(G)}q^{|f^{-1}(1)|}\ehr_{str}^{(1)}(G_{\mid f^{-1}(1)})\ldots 
q^{|f^{-1}(k)|}\ehr_{str}^{(1)}(G_{\mid f^{-1}(k)}) H_k(X)\\
&=q^{|V(G)|}\sum_{k=1}^\infty \sum_{f\in S_k(G)}\ehr_{str}^{(1)}(G_{\mid f^{-1}(1)})\ldots 
\ehr_{str}^{(1)}(G_{\mid f^{-1}(k)}) H_k(X)\\
&=q^{|V(G)|}\Ehr_{str}(G),
\end{align*}
Similarly, $\Ehr^{(q)}(G)=q^{|V(G)|}\Ehr(G)$. We now study $\Ehr_{str}$ and $\Ehr$. 
Let $G$ be a mixed graph.
\[\Ehr(G)=\sum_{k=1}^\infty \sum_{f\in S_k(G)} H_k(X),\]
which gives the announced result. For $k\geq 1$, we denote by $S'_k(G)$ the set of surjective  maps $f:V(G)\longrightarrow [k]$
such that for any $x,y\in V(G)$,
\[x\arc{G} y\Longrightarrow f(x)< f(y).\]
By definition of $\ehr_{str}$,
\[\Ehr_{str}(G)=\sum_{k=1}^\infty \sum_{f\in S_k(G)} \ehr_{str}(G_{\mid f^{-1}(1)})\ldots 
\ehr_{str}(G_{\mid f^{-1}(k)}) H_k(X)=\sum_{k=1}^\infty \sum_{f\in S_k'(G)} H_k(X),\]
which implies the announced result.\\

Let us prove that $\Ehr^{(q)}_{str}\circ \varpi_0=\Ehr^{(q)}_{str}$. For this, it is enough to prove that
$\epsilon_\delta\circ \Ehr^{(q)}_{str}\circ \varpi_0=\epsilon_\delta \circ \Ehr^{(q)}_{str}$, that is to say 
$\ehr_{str}^{(q)}\circ \varpi_0=\ehr_{str}^{(q)}$. Let $G$ be a graph. If $G$ is not acyclic, then 
$\ehr_{str}^{(q)}\circ \varpi_0(G)=0$. Moreover, necessarily $A(G)\neq \emptyset$, so $\ehr_{str}^{(q)}=0$.
Otherwise, $\varpi_0(G)=G$ and $\ehr_{str}^{(q)}\circ \varpi_0(G)=\ehr_{str}^{(q)}(G)$. 
\end{proof}

\begin{remark}
By \cite[Corollary 3.12]{Foissy40},
\begin{align*}
\Ehr_{str}&=P_{chr_S}\leftsquigarrow \ehr_{str},&\Ehr&=P_{chr_S}\leftsquigarrow \ehr.
\end{align*}
\end{remark}

\begin{remark}
Classically, Ehrhart polynomials are attached to integral polytopes: given an integral polytope $P$, its Ehrhart polynomial $\Ehr_P$, evaluated in $t\in \N$,
gives the number on integral points of the dilated polytope $tP$. The duality principle states that the number of integral points in the interior of $tP$ is $(-1)^{\dim(P)}\Ehr_P(-t)$. 
Given a mixed graph $G$, after an arbitrary indexation of its vertices by $[n]$, we associate to it an integral polytope
\[P_G=\{(x_1,\ldots,x_n)\in [0,1]^n\mid \forall i,j\in [n],\: i\arc{G} j\Longrightarrow x_i\leq x_j\}.\]
Then $\Ehr_{P_G}=\Ehr_G$. 
\end{remark}

\begin{notation}
Let $G$ be a mixed graph. We denote by $S(G)$ the set of vertices $y\in V(G)$ such that there exists no $e\in A(G)$
such that $y$ is the final vertex of $e$ (set of \emph{sources} of $G$) and by $W(G)$ the set of vertices $x\in V(G)$ 
such that there exists no $e\in A(G)$ such that $x$ is the initial vertex of $e$ (set of \emph{wells} of $G$).
\end{notation}

\begin{prop} \label{propconvo}
Let $q,q'\in \K$. For any mixed graph $G$, denoting by $*$ the convolution product associated to $\Delta$,
\begin{align*}
\ehr_{str}^{(q)}*\ehr^{(q')}(G)&=q'^{|V(G)\setminus S(G)|}(q+q')^{|S(G)|},\\
\ehr^{(q)}*\ehr_{str}^{(q')}(G)&=q'^{|V(G)\setminus W(G)|}(q+q')^{|W(G)|}.
\end{align*}
In particular, if $S(G)\neq \emptyset$ and $W(G)\neq \emptyset$,
\begin{align*}
\ehr_{str}^{(-q)}*\ehr^{(q)}(G)&=\ehr^{(q)}*\ehr_{str}^{(-q)}(G)=0=\varepsilon_\Delta(G).
\end{align*}
\end{prop}

\begin{proof}
Indeed,
\begin{align*}
\ehr_{str}^{(q)}*\ehr^{(q')}(G)&=\sum q^{|I_1|} q'^{|I_2|},
\end{align*}
where the sum is over all partitions $V(G)=I_1\sqcup I_2$ such that if $x\arc{G} y$ in $V(G)$, then $(x,y)\in (I_1\times I_2)\cup I_2^2$,
that is to say such that $I_1\subseteq S(G)$. Hence,
\begin{align*}
\ehr_{str}^{(q)}*\ehr^{(q')}(G)&=\sum_{I_1\subseteq S(G)} q^{|I_1|} q'^{|V(G)\setminus I_1|}\\
&=q'^{|V(G)\setminus S(G)|}\sum_{I_1\subseteq S(G)} q^{|I_1|} q'^{|S(G)\setminus I_1|}\\
&=q'^{|V(G)\setminus S(G)|}(q+q')^{|S(G)|}. 
\end{align*}
The proof is similar for $\ehr_{str}^{(q)}*\ehr^{(q')}(G)$, replacing sources by wells. 
\end{proof}

\begin{cor} \label{cor5.6}
The inverse (for the convolution product $*$) of the restriction of $\ehr^{(q)}$ to $\calF[\bfG_{ac}]$ is 
the restriction of $\ehr_{str}^{(-q)}$.
\end{cor}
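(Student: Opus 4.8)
The plan is to read off both one-sided inverse identities directly from the two convolution formulas of the preceding proposition, specialised at $q'=-q$. First I would record that $\bfG_{ac}$ is a twisted subbialgebra of $(\bfG,m,\Delta)$, so that $\calF[\bfG_{ac}]$ is a subbialgebra of $(\calF[\bfG],m,\Delta)$; consequently the convolution product $*$ dual to $\Delta$ restricts to $\calF[\bfG_{ac}]$, with unit $\varepsilon_\Delta$, and the characters $\ehr^{(q)}$ and $\ehr_{str}^{(-q)}$ restrict to characters there. It therefore suffices to check that $\ehr^{(q)}*\ehr_{str}^{(-q)}$ and $\ehr_{str}^{(-q)}*\ehr^{(q)}$ both agree with $\varepsilon_\Delta$ on every acyclic mixed graph.

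Next I would simply substitute. By the preceding proposition, for any mixed graph $G$,
\begin{align*}
\ehr^{(q)}*\ehr_{str}^{(-q)}(G)&=(-q)^{|V(G)\setminus W(G)|}\,0^{|W(G)|},\\
\ehr_{str}^{(-q)}*\ehr^{(q)}(G)&=q^{|V(G)\setminus S(G)|}\,0^{|S(G)|},
\end{align*}
since in both cases the factor $(q+q')^{|\cdots|}$ collapses to $0^{|\cdots|}$. The whole argument then rests on the combinatorial observation that a \emph{nonempty} acyclic mixed graph $G$ satisfies $S(G)\neq\emptyset$ and $W(G)\neq\emptyset$: the digraph $(V(G),A(G))$ has no oriented cycle, so a longest oriented path exhibits a vertex with no incoming arc (a source) and a vertex with no outgoing arc (a well), while if $A(G)=\emptyset$ every vertex is both. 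Hence for nonempty $G$ the exponents $|S(G)|$ and $|W(G)|$ are at least $1$, forcing $0^{|W(G)|}=0^{|S(G)|}=0$, so both convolutions vanish on nonempty acyclic graphs.

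Finally I would treat the empty graph $G=1$, where $V(1)=S(1)=W(1)=\emptyset$: with the stated convention $0^0=1$ both formulas evaluate to $1$. Comparing with $\varepsilon_\Delta$, which sends $1$ to $1$ and every nonempty graph to $0$, yields $\ehr^{(q)}*\ehr_{str}^{(-q)}=\ehr_{str}^{(-q)}*\ehr^{(q)}=\varepsilon_\Delta$ on $\calF[\bfG_{ac}]$, i.e. $\ehr_{str}^{(-q)}$ is the two-sided $*$-inverse of $\ehr^{(q)}$ there. There is no genuine obstacle beyond the source/well existence fact; the only point to emphasise is that these identities \emph{fail} on all of $\calF[\bfG]$ — for instance the two-cycle $\xymatrix{\rond{x}\ar@/_/[r]&\rond{y}\ar@/_/[l]}$ has $S=W=\emptyset$, giving $\ehr^{(q)}*\ehr_{str}^{(-q)}=q^2\neq0$ — which is exactly why acyclicity is needed to make the $0^{|\cdots|}$ factors annihilate every nonempty graph.
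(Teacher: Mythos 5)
Your proof is correct and follows essentially the same route as the paper: specialise the convolution formulas of the preceding proposition at $q'=-q$ and use that a nonempty acyclic mixed graph has a nonempty set of sources and of wells. The paper only verifies the one-sided identity $\ehr_{str}^{(-q)}*\ehr^{(q)}=\varepsilon_\Delta$; your checking of both sides (and the explicit treatment of the empty graph and the non-acyclic counterexample) is a harmless, slightly more complete version of the same argument.
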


\begin{proof}
Let $G$ be an acyclic mixed graph. Obviously, if $G=1$, then
\[\ehr_{str}^{(-q)}*\ehr^{(q)}(G)1=\varepsilon_\Delta(G).\] 
Otherwise, as $G$ is acyclic, then $S(G)\neq \emptyset$ and $W(G)\neq \emptyset$. We can conclude with Proposition \ref{propconvo}. 
\end{proof}

Let us now prove the duality principle for Ehrhart polynomials:

\begin{cor} \label{cor4.7}
Let $G$ be an acyclic mixed graph. Then 
\[\Ehr_{str}(G)(-X)=(-1)^{|V(G)|}\Ehr(G)(X).\]
\end{cor}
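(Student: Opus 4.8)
The plan is to establish the stated polynomial identity by checking it at every positive integer and then invoking the fact that two polynomials agreeing at infinitely many points coincide. Concretely, I would fix an acyclic mixed graph $G$ and prove that $\Ehr_{str}(G)(-n)=(-1)^{|V(G)|}\Ehr(G)(n)$ for all integers $n\geqslant 1$, from which the identity of polynomials in $X$ (after the substitution $X\mapsto -X$) is immediate. The substitution $X\mapsto -X$ will be realised entirely through the convolution product $*$ associated to $\Delta$ on the target $\K[X]$: writing $\mathrm{ev}_a\colon P\mapsto P(a)$ for the evaluation character, one has $\mathrm{ev}_a*\mathrm{ev}_b=\mathrm{ev}_{a+b}$ (because $\Delta(X)=X\otimes 1+1\otimes X$), so that $\mathrm{ev}_{-n}=(\mathrm{ev}_1^{*-1})^{*n}$, while $\mathrm{ev}_1=\epsilon_\delta$ is the counit of $(\K[X],m,\delta)$.

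First I would pull everything back along $\Ehr_{str}$. Since $\Ehr_{str}\colon(\calF[\bfG],m,\Delta)\longrightarrow(\K[X],m,\Delta)$ is a counital coalgebra morphism, the map $\alpha\mapsto\alpha\circ\Ehr_{str}$ is an algebra morphism from $(\K[X]^*,*)$ to $(\calF[\bfG]^*,*)$, and in particular preserves inverses. As $\epsilon_\delta\circ\Ehr_{str}=\ehr_{str}$ by definition of $\Ehr_{str}$, this yields $\mathrm{ev}_{-1}\circ\Ehr_{str}=(\mathrm{ev}_1\circ\Ehr_{str})^{*-1}=\ehr_{str}^{*-1}$, the $*$-inverse being taken in $(\calF[\bfG]^*,*)$, which exists since $(\calF[\bfG],m,\Delta)$ is graded connected. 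Consequently
\[\Ehr_{str}(G)(-n)=\mathrm{ev}_{-n}\big(\Ehr_{str}(G)\big)=\big(\mathrm{ev}_{-1}\circ\Ehr_{str}\big)^{*n}(G)=\big(\ehr_{str}^{*-1}\big)^{*n}(G).\]
The point of this step is that no explicit knowledge of the antipode of $\calF[\bfG]$ is needed; only $\Ehr_{str}$ being a morphism of coalgebras is used.

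Next I would identify $\ehr_{str}^{*-1}$ on acyclic graphs. The preceding Proposition computing $\ehr_{str}^{(q)}*\ehr^{(q')}$ gives, exactly as in the Corollary treating $\ehr^{(q)}$, that on $\calF[\bfGac]$ the $*$-inverse of $\ehr_{str}=\ehr_{str}^{(1)}$ is $\ehr^{(-1)}$: indeed $\ehr_{str}^{(1)}*\ehr^{(-1)}(G)=(-1)^{|V(G)\setminus S(G)|}\,0^{|S(G)|}=0$ for every acyclic $G\neq 1$, since such a $G$ has a source, so $S(G)\neq\emptyset$. Now in $\big(\ehr_{str}^{*-1}\big)^{*n}(G)=\big(\ehr_{str}^{*-1}\big)^{\otimes n}\circ\Delta^{(n-1)}(G)$ every tensor factor is a restriction $G_{\mid f^{-1}(k)}$ of $G$, hence acyclic; therefore I may replace $\ehr_{str}^{*-1}$ by $\ehr^{(-1)}$ in each slot, obtaining $\big(\ehr_{str}^{*-1}\big)^{*n}(G)=\big(\ehr^{(-1)}\big)^{*n}(G)$.

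Finally I would evaluate this last expression. Expanding $\Delta^{(n-1)}(G)=\sum_f G_{\mid f^{-1}(1)}\otimes\cdots\otimes G_{\mid f^{-1}(n)}$ over all maps $f\colon V(G)\to[n]$ with $x\arc{G}y\Rightarrow f(x)\leqslant f(y)$, and using $\ehr^{(-1)}(G_{\mid f^{-1}(k)})=(-1)^{|f^{-1}(k)|}$, each term contributes $(-1)^{|V(G)|}$, whence $\big(\ehr^{(-1)}\big)^{*n}(G)=(-1)^{|V(G)|}\sharp\{f\}=(-1)^{|V(G)|}\Ehr(G)(n)$ by the description of $\Ehr$ in Proposition \ref{prop4.4}. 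Chaining the three steps gives $\Ehr_{str}(G)(-n)=(-1)^{|V(G)|}\Ehr(G)(n)$ for all $n\geqslant 1$, and the desired polynomial identity follows. I expect the only delicate points to be bookkeeping rather than conceptual: ensuring that the iterated-coproduct factors remain acyclic so that the convolution inverse may be computed slot-by-slot via the preceding Corollary, and verifying that $\mathrm{ev}_{-1}\circ\Ehr_{str}=\ehr_{str}^{*-1}$ is legitimately deduced from $\Ehr_{str}$ being a counital coalgebra morphism together with $\mathrm{ev}_1=\epsilon_\delta$.
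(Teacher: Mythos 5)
Your proof is correct, and while it rests on the same key lemma as the paper's argument, it globalizes from the character level to the polynomial identity by a genuinely different mechanism. Both proofs hinge on identifying, via the Proposition computing $\ehr_{str}^{(q)}*\ehr^{(q')}$, the convolution inverse of $\ehr_{str}$ with $\ehr^{(-1)}$ on acyclic graphs; your one-sided check suffices because inverses in the convolution algebra are unique, and the bookkeeping point you flag (that the inverse taken in $\calF[\bfG]^*$ restricts to the inverse in $\calF[\bfG_{ac}]^*$) is harmless precisely because $\calF[\bfG_{ac}]$ is a subbialgebra for $(m,\Delta)$, so every tensor factor of $\Delta^{(n-1)}(G)$ stays acyclic. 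Where the two arguments diverge is in the final leg: the paper writes $\Ehr_{str}(G)(-X)=\Ehr_{str}\circ S_{\bfG}(G)$ using the antipode of $\calF[\bfG_{ac}]$, computes the associated character $\epsilon_\delta\circ\Ehr_{str}\circ S_{\bfG}=\ehr_{str}^{*-1}=\ehr^{(-1)}$, and then invokes the uniqueness statement $\Ehr_{str}\circ S_{\bfG}=P_{chr_S}\leftsquigarrow\ehr^{(-1)}=(-1)^{|V(\cdot)|}\,\Ehr$ furnished by the double bialgebra machinery, obtaining the identity of polynomials in one stroke. You instead evaluate at each negative integer through convolution powers of evaluation characters, $\mathrm{ev}_{-n}\circ\Ehr_{str}=\bigl(\ehr_{str}^{*-1}\bigr)^{*n}$, compute the result combinatorially slot by slot, and finish by polynomial interpolation. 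What your route buys is economy of means: it uses only the single Hopf algebra structure $(m,\Delta)$ and never touches $\delta$, the antipode, or the classification of morphisms to $\K[X]$; what it costs is a longer value-by-value computation where the paper's framework, once set up, delivers the polynomial identity directly.
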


\begin{proof}
We denote by $S_{\bfG_{ac}}$ the antipode of $(\calF[\bfG_{ac}],m,\Delta)$ and by $S$ the antipode of $(\K[X],m,\Delta)$. In particular, for any $P\in \K[X]$, $S(P(X))=P(-X)$.
As $\Ehr:(\calF[\bfG_{ac}],m,\Delta)\longrightarrow(\K[X],m,\Delta)$ is a Hopf algebra morphism,
\[\Ehr_{str}(G)(-X)=S\circ \Ehr_{str}(G)=\Ehr_{str}\circ S_{\bfG_{ac}}(G). \]
Therefore, by Corollary \ref{cor5.6},
\begin{align*}
\Ehr_{str}(G)(-1)&=S\circ \Ehr_{str}(G)(1)\\
&=\Ehr_{str}\circ S_{\bfG_{ac}}(G)(1)\\
&=\ehr_{str}\circ S_{\bfG_{ac}}(G)\\
&=\ehr_{str}^{*-1}(G)\\
&=\ehr^{(-1)}(G).
\end{align*}
This implies that $\Ehr_{str}\circ S$ is the Hopf algebra morphism $P_{chr_S}\leftsquigarrow \ehr^{(-1)}$:
\begin{align*}
\Ehr_{str}\circ S(G)&=(P_{chr_S}\otimes \ehr^{(-1)})\circ \delta(G)\\
&=(-1)^{|V(G)|}(P_{chr_S}\otimes \ehr)\circ \delta(G)\\
&=(-1)^{|V(G)|} \Ehr(G)(X).\qedhere
\end{align*}
\end{proof}

\begin{cor}
Let $G$ be an acyclic oriented graph. Then 
\begin{align*}
\Ehr_{str}(G)&=P_{chr_S}(G),&P_{chr_S}(G)(-1)&=(-1)^{|V(G)|},\\
\Ehr(G)&=P_{chr_W}(G),&P_{chr_W}(G)(-1)&=(-1)^{|V(G)|}\epsilon_\delta(G).
\end{align*}
\end{cor}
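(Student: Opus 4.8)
The plan is to deduce everything from the duality principle of Corollary \ref{cor4.7} together with the explicit combinatorial descriptions of $\Ehr_{str}$ and $\Ehr$ furnished by Proposition \ref{prop4.4}. First I would establish the two polynomial identities. Let $G$ be an acyclic oriented graph, so that $E(G)=\emptyset$. For such a $G$ the edge constraint $c(x)\neq c(y)$ appearing in the definition of a (strong) valid coloration is vacuous, so $P_{chr_S}(G)(N)$ counts exactly the maps $f:V(G)\longrightarrow[N]$ with $x\arc{G} y\Rightarrow f(x)<f(y)$; this is precisely $\Ehr_{str}(G)(N)$ (taken with $q=1$) according to Proposition \ref{prop4.4}. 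As two polynomials agreeing on all positive integers coincide, $\Ehr_{str}(G)=P_{chr_S}(G)$. The same argument, with strict inequalities replaced by weak ones, gives $\Ehr(G)=P_{chr_W}(G)$.

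Next I would compute the value at $-1$ for the strong polynomial. Corollary \ref{cor4.7}, applied to the acyclic mixed graph $G$, gives the identity $\Ehr_{str}(G)(-X)=(-1)^{|V(G)|}\Ehr(G)(X)$. Since $\epsilon_\delta\circ\Ehr=\ehr$ and $\epsilon_\delta$ sends a polynomial $P$ to $P(1)$, we have $\Ehr(G)(1)=\ehr(G)=1$. Evaluating the duality identity at $X=1$ therefore yields $\Ehr_{str}(G)(-1)=(-1)^{|V(G)|}$, which by the first identity rewrites as $P_{chr_S}(G)(-1)=(-1)^{|V(G)|}$.

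For the weak polynomial I would read the duality in the opposite direction: substituting $X\mapsto -X$ in Corollary \ref{cor4.7} gives $\Ehr(G)(-X)=(-1)^{|V(G)|}\Ehr_{str}(G)(X)$, so that $\Ehr(G)(-1)=(-1)^{|V(G)|}\Ehr_{str}(G)(1)=(-1)^{|V(G)|}\ehr_{str}(G)$, using $\epsilon_\delta\circ\Ehr_{str}=\ehr_{str}$. The one point requiring a little care is to identify $\ehr_{str}(G)$ with $\epsilon_\delta(G)$: by definition $\ehr_{str}(G)=1$ if $A(G)=\emptyset$ and $0$ otherwise, whereas $\epsilon_\delta(G)=1$ if $A(G)=E(G)=\emptyset$ and $0$ otherwise; for an oriented graph one has $E(G)=\emptyset$, so the two conditions coincide and $\ehr_{str}(G)=\epsilon_\delta(G)$. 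Combining this with $\Ehr(G)=P_{chr_W}(G)$ produces $P_{chr_W}(G)(-1)=(-1)^{|V(G)|}\epsilon_\delta(G)$, which completes the proof.

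Since the substantive work is already packaged in the duality principle of Corollary \ref{cor4.7}, there is no genuine obstacle here; the two steps that demand attention are the (harmless) vacuity of the edge conditions when $E(G)=\emptyset$, which secures the polynomial identities, and the identification of the characters $\ehr_{str}$ and $\epsilon_\delta$ on oriented graphs, which converts the generic value $\ehr_{str}(G)$ into the stated $\epsilon_\delta(G)$.
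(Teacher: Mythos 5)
Your proof is correct, and the computations of the values at $-1$ are exactly those of the paper: both apply the duality principle of Corollary \ref{cor4.7} at $X=1$ (respectively at $X=-1$) and then identify $\ehr(G)=1$ and $\ehr_{str}(G)=\epsilon_\delta(G)$ on oriented graphs. The only place where you diverge from the paper is in establishing the two polynomial identities $\Ehr_{str}(G)=P_{chr_S}(G)$ and $\Ehr(G)=P_{chr_W}(G)$. You do this by direct enumeration: since $E(G)=\emptyset$ the edge constraints in the definitions of (weak) valid colorations are vacuous, so the counting functions of $P_{chr_S}(G)$ and $\Ehr_{str}(G)$ (respectively $P_{chr_W}(G)$ and $\Ehr(G)$) agree on all positive integers, hence the polynomials coincide. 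The paper instead observes that the associated characters coincide on $\calF[\bfG_{aco}]$, namely $\ehr_{str}=\epsilon_\delta$ and $\ehr=\lambda_W$ there, and invokes the bijection $\phi\mapsto\epsilon_\delta\circ\phi$ between characters and Hopf algebra morphisms to $\K[X]$, which forces the morphisms themselves to agree on that subalgebra. Your route is more elementary and self-contained; the paper's is shorter given the machinery already in place and makes transparent that the identification is an instance of the general character--morphism correspondence. Both rest on the same underlying fact that $E(G)=\emptyset$ trivialises the edge conditions, so there is no gap in either argument.
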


\begin{proof}
We work in $\calF[\bfG]$. For all acyclic oriented graphs $H$,
\[\ehr_{str}(H)=\epsilon_\delta(H)=\begin{cases}
1\mbox{ if }A(H)=\emptyset,\\
0\mbox{ otherwise}.
\end{cases}\]
Hence, $\Ehr_{str}(G)=P_{chr_S}(G)$: the restriction of $\Ehr$ to $\calF[\bfG_{aco}]$ is $P_{chr_S}$. 
Moreover,
\begin{align*}
\Ehr_{str}(G)(-1)&=(-1)^{|V(G)|}\Ehr(G)(1)=(-1)^{|V(G)|}\ehr(G)=(-1)^{|V(G)|}. 
\end{align*} 
For any oriented acyclic graph $H$, $\ehr(H)=1=\lambda_W(H)$, so $\Ehr(G)=P_{chr_W}(G)$. 
Moreover, 
\begin{align*}
\Ehr(G)(-1)&=(-1)^{|V(G)|}\Ehr_{str}(G)(1)=(-1)^{|V(G)|}\ehr_{str}(G)=(-1)^{|V(G)|}\epsilon_\delta(G).\qedhere 
\end{align*} 
\end{proof}

From Corollary \ref{cor4.3}, we obtain another proof of the following result, obtained in a different way in \cite[Theorem 3]{Beck2012}:

\begin{cor} \label{cor4.9}
Let $G$ be a mixed graph. Then
\[P_{chr_S}(G)(-1)=(-1)^{|V(G)|}|\calO_{ac}(G)|.\]
\end{cor}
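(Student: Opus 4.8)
The plan is to obtain Corollary \ref{cor4.9} directly from Corollary \ref{cor4.3} together with the evaluation of the strong chromatic polynomial on acyclic oriented graphs established just above; no fresh computation is needed.

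First I would invoke Corollary \ref{cor4.3}, which writes the strong chromatic polynomial of an arbitrary mixed graph as the sum of those of its acyclic orientations,
\[P_{chr_S}(G)=\sum_{H\in \calO_{ac}(G)}P_{chr_S}(H).\]
As this is an identity in $\K[X]$, I may specialize $X$ to $-1$, obtaining
\[P_{chr_S}(G)(-1)=\sum_{H\in \calO_{ac}(G)}P_{chr_S}(H)(-1).\]
Each $H\in\calO_{ac}(G)$ is by definition an acyclic oriented graph with $V(H)=V(G)$, so $|V(H)|=|V(G)|$; and the corollary preceding this one (the evaluation $P_{chr_S}(H)(-1)=(-1)^{|V(H)|}$ for acyclic oriented graphs, which itself follows from the Ehrhart duality principle of Corollary \ref{cor4.7}) shows that every summand equals $(-1)^{|V(G)|}$. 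Summing over the $\sharp\calO_{ac}(G)$ orientations then yields
\[P_{chr_S}(G)(-1)=(-1)^{|V(G)|}\,\sharp\calO_{ac}(G),\]
as claimed.

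The substance of the argument lies entirely upstream: the nontrivial inputs are Corollary \ref{cor4.3}, which rests on the double bialgebra epimorphism $\Theta$ of Theorem \ref{theo4.2} and the uniqueness of the double bialgebra morphism to $\K[X]$, and the value $P_{chr_S}(H)(-1)=(-1)^{|V(H)|}$ for acyclic oriented graphs coming from Ehrhart duality. Given these, there is no genuine obstacle: the proof is just linearity of evaluation at $-1$ combined with the fact that $|V(H)|$ is constant across the orientations. The one point worth checking is the degenerate case $\calO_{ac}(G)=\emptyset$, which occurs exactly when $G$ contains a directed cycle of arcs; there the stated empty-sum convention makes the right-hand side zero, while $\VPC(G)=\emptyset$ forces $P_{chr_S}(G)=0$, so both sides vanish and the formula still holds.
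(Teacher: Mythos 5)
Your argument is correct and is exactly the route the paper intends: the paper derives Corollary \ref{cor4.9} from Corollary \ref{cor4.3} combined with the evaluation $P_{chr_S}(H)(-1)=(-1)^{|V(H)|}$ for acyclic oriented graphs obtained from Ehrhart duality. Your extra check of the degenerate case $\calO_{ac}(G)=\emptyset$ is a sensible addition but changes nothing of substance.
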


\begin{proof}
Indeed,
\[P_{chr_S}(G)(-1)=\sum_{H\in \calO_{ac}(G)}P_{chr_S}(H)(-1)=\sum_{H\in \calO_{ac}(G)}(-1)^{|V(H)|}
=(-1)^{|V(G)|}|\calO_{ac}(G)|. \qedhere \]
\end{proof}

\begin{remark}
We recover the classical result on the chromatic polynomial when this corollary is applied to graphs \cite{Stanley1973}.
\end{remark}

From \cite[Corollary 2.3]{Foissy40}:

\begin{cor}\label{cor4.10}
Denoting by $S$ the antipode of $(\calF[\bfG],m,\Delta)$, for any mixed graph $G$,
\[S(G)=\sum_{\sim\in \eq^c[G]} (-1)^{\cl(\sim)}|\calO_{ac}(G/\sim)|\:(G\mid \sim).\]
\end{cor}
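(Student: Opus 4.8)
The plan is to apply the general formula for the antipode of the connected Hopf algebra attached to a double bialgebra, recalled in the introduction from \cite[Corollary 2.3]{Foissy40}, namely
\[S=(\epsilon_\delta^{*-1}\otimes \id)\circ \delta,\]
where $\epsilon_\delta^{*-1}$ is the inverse of the counit $\epsilon_\delta$ of $\delta$ for the convolution product $*$ dual to $\Delta$. The same result expresses this inverse through the universal double bialgebra morphism to $\K[X]$: for any $a$ one has $\epsilon_\delta^{*-1}(a)=P_{chr_S}(a)(-1)$, since by Proposition \ref{prop3.2} the morphism $P_{chr_S}$ is precisely the unique double bialgebra morphism from $\calF[\bfG]$ to $\K[X]$.

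First I would substitute the explicit form of the second coproduct of $\calF[\bfG]$. On a mixed graph $G$,
\[\delta(G)=\sum_{\sim\in \eq^c[G]} G/\sim\otimes G\mid \sim,\]
so applying $(\epsilon_\delta^{*-1}\otimes \id)$ and using the identification of the previous paragraph yields
\[S(G)=\sum_{\sim\in \eq^c[G]}\epsilon_\delta^{*-1}(G/\sim)\, G\mid \sim=\sum_{\sim\in \eq^c[G]}P_{chr_S}(G/\sim)(-1)\, G\mid \sim.\]
The final step is to evaluate each coefficient using Corollary \ref{cor4.9}, which gives $P_{chr_S}(G/\sim)(-1)=(-1)^{|V(G/\sim)|}\sharp\calO_{ac}(G/\sim)$. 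Since the vertex set of $G/\sim$ is $V(G)/\sim$, its cardinality equals $\cl(\sim)$, and the announced formula follows.

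The computation itself is routine: the real content has already been assembled earlier, so I expect no genuine obstacle. The only points worth stating carefully are that $P_{chr_S}$ is indeed the relevant universal morphism (Proposition \ref{prop3.2}), that it is the contracted graph $G/\sim$ which sits in the first tensor factor of $\delta$ and hence receives $\epsilon_\delta^{*-1}$, and the identification $|V(G/\sim)|=\cl(\sim)$, which turns the sign $(-1)^{|V(G/\sim)|}$ produced by Corollary \ref{cor4.9} into the $(-1)^{\cl(\sim)}$ of the statement.
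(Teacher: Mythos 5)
Your proposal is correct and follows exactly the route the paper intends: Corollary \ref{cor4.10} is stated as an immediate consequence of the general antipode formula $S=(\epsilon_\delta^{*-1}\otimes \id)\circ \delta$ from the double bialgebra framework, combined with the identification $\epsilon_\delta^{*-1}(G)=P_{chr_S}(G)(-1)$ and the evaluation of Corollary \ref{cor4.9}. Your write-up simply makes explicit the substitution $|V(G/\sim)|=\cl(\sim)$ that the paper leaves implicit.
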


\section{Applications to characters on mixed graphs}

\subsection{Weak chromatic polynomial}

\begin{prop}\label{prop5.1}
Let $G$ be a totally acyclic mixed graph. Then 
\[P_{chr_W}(G)(-1)=\begin{cases}
0\mbox{ if }A(G)\neq \emptyset,\\
(-1)^{|V(G)|}|\calO_{ac}(G)| \mbox{ otherwise}.
\end{cases}\]
\end{prop}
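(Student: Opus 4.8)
The plan is to evaluate the character $\chi_W:=\mathrm{ev}_{-1}\circ P_{chr_W}$, where $\mathrm{ev}_{-1}:\K[X]\to\K$ sends $P$ to $P(-1)$. Since $\mathrm{ev}_{-1}$ and $P_{chr_W}:(\calF[\bfG],m)\to(\K[X],m)$ are algebra morphisms, $\chi_W$ is a character, so it is enough to treat connected $G$. First I dispose of the case $A(G)=\emptyset$: then $G$ is a simple graph, and since there is no arc constraint, weak and strong valid colourations coincide, i.e. $P_{chr_W}(G)=P_{chr_S}(G)$; Corollary \ref{cor4.9} then gives $P_{chr_W}(G)(-1)=(-1)^{|V(G)|}\sharp\calO_{ac}(G)$, which is the announced value. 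From now on $A(G)\neq\emptyset$, and the goal is $\chi_W(G)=0$.

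Next I would decompose $P_{chr_W}(G)$ according to the orientation induced by a colouration. If $c$ is a weak valid colouration of $G$, orient each edge $\{x,y\}\in E(G)$ from the smaller to the larger colour (legitimate as $c(x)\neq c(y)$); the resulting oriented graph $H_c$ keeps all arcs of $A(G)$ and is acyclic, because along any directed cycle the colour would be non-decreasing (weak on arcs, strict on oriented edges), forcing all values equal, which is impossible as soon as the cycle uses an oriented edge, and impossible for an all-arc cycle since $G$ is acyclic. Hence $H_c\in\calO_{ac}(G)$, and grouping colourations by their induced orientation yields, as polynomials,
\[
P_{chr_W}(G)=\sum_{H\in\calO_{ac}(G)}N_H,\qquad
N_H(n)=\sharp\bigl\{f:V(G)\to[n]\ \bigm|\ x\arc{G}y\Rightarrow f(x)\le f(y),\ \text{$f$ strictly increasing along each edge oriented in $H$}\bigr\}.
\]
Evaluating at $-1$ gives $\chi_W(G)=\sum_{H\in\calO_{ac}(G)}N_H(-1)$.

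Each $N_H$ is the order polynomial of the acyclic digraph $H$ with the weak/strict marking (weak on $A(G)$, strict on the oriented edges). By the reciprocity for order polynomials, $(-1)^{|V(G)|}N_H(-1)$ equals the number of maps to a single value satisfying the reversed constraints, i.e. it is $1$ if every pair is \emph{forced strict} and $0$ otherwise; concretely $N_H(-1)=(-1)^{|V(G)|}$ exactly when every arc $u\arc{G}v$ of $A(G)$ is forced strict in $H$ (there is a directed path $u\leadsto v$ in $H$ using at least one oriented edge), and $N_H(-1)=0$ otherwise. Thus $\chi_W(G)=0$ will follow from the key point, where total acyclicity is essential:

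\emph{For a totally acyclic $G$ with $A(G)\neq\emptyset$, no $H\in\calO_{ac}(G)$ makes every arc of $A(G)$ forced strict.} To see this, fix a linear extension of $H$ and choose an arc $u\arc{G}v\in A(G)$ whose head $v$ is minimal for it. On any forced-strict witness path $P:u\leadsto v$, every arc of $A(G)$ must have head $v$ by minimality, hence can only occur as the last step; so either $P$ uses only oriented edges — reversing $P$ (edges are traversable both ways) and appending $u\arc{G}v$ produces a mixed cycle, contradicting total acyclicity — or $P$ ends with an arc $w\arc{G}v$ preceded by an all-edge path $u\leadsto w$, and replacing $(u,v)$ by $(w,v)$ and iterating pushes the tail strictly forward in the linear extension while fixing the head $v$; by finiteness the all-edge case must eventually occur, again yielding a mixed cycle. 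Hence every $N_H(-1)=0$ and $\chi_W(G)=0$, completing the case $A(G)\neq\emptyset$. I expect this combinatorial lemma — locating, inside an arbitrary acyclic orientation, an arc of $A(G)$ that is genuinely weak — to be the main obstacle, precisely because the naive idea of toggling the two endpoints of a single arc is not well defined (merging blocks may create internal edges), so the argument must be organized as the above descent that manufactures a forbidden mixed cycle.
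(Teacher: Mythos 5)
Your overall strategy is viable and genuinely different from the paper's proof (which handles $A(G)\neq\emptyset$ by induction on $|E(G)|$ via the deletion--contraction identity of \cite{Beck2012}, the base case $E(G)=\emptyset$ being the Ehrhart duality of Corollary \ref{cor4.7}); your decomposition $P_{chr_W}(G)=\sum_{H\in\calO_{ac}(G)}N_H$ and your closing descent lemma are both correct. The gap is in the reciprocity step. The assertion that ``$(-1)^{|V(G)|}N_H(-1)$ equals the number of maps to a single value satisfying the reversed constraints'' is \emph{false} for a general acyclic digraph with a weak/strict marking of its arcs: take vertices $1,2,3$ with weak arcs $1\to 2$, $2\to 3$ and a strict arc $1\to 3$; then $N(n)=\binom{n+2}{3}-n$, so $N(-1)=1$, whereas your rule (the pair $(1,2)$ is not forced strict) predicts $0$. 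Stanley's order-polynomial reciprocity applies when the constraint system is carried by the covering relations of the reachability poset $\overline{H}$; a strict arc that is bypassed by a longer directed path imposes a strictness that no cover sees, and reciprocity breaks exactly there.

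To run your argument you must therefore first prove: \emph{for $G$ totally acyclic and $H\in\calO_{ac}(G)$, every oriented edge of $H$ is a covering relation of $\overline{H}$} (redundant weak arcs are harmless, since the weak inequality is implied by any directed path). This is true, and it is precisely here that total acyclicity enters a second time: if the oriented edge $x\to y$ were bypassed by a directed path $x\to w_1\to\cdots\to w_k\to y$ of $H$ with $k\geq 1$, that path is a mixed path of $G$, and appending the edge $\{y,x\}$ produces a mixed cycle. Once this is in place, $N_H$ is the order polynomial of the labelled poset $\overline{H}$ with a cover strict iff its arc is an oriented edge, reciprocity gives $N_H(-1)=(-1)^{|V(G)|}$ iff every cover is an oriented edge, and that condition is equivalent to your ``every arc of $A(G)$ is forced strict'' (an arc and an edge cannot join the same pair, so a forced-strict witness path has length at least two). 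So the proof can be completed along your lines, but as written the evaluation of $N_H(-1)$ is unjustified, and, contrary to your closing remark, total acyclicity is needed before the final combinatorial lemma, not only in it.
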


\begin{proof}
If $A(G)=\emptyset$, then $P_{chr_W}(G)=P_{chr_S}(G)$, and the result comes from Corollary \ref{cor4.9}. Let us assume
that $A(G)\neq \emptyset$. We proceed by induction on $|E(G)|$. If $E(G)=\emptyset$, then by definition,
$P_{chr_W}(G)=\Ehr(G)$. By the duality principle for Ehrhart polynomials (Corollary \ref{cor4.7}),
\[P_{chr_W}(G)(-1)=(-1)^{|V(G)|} \Ehr_{str}(G) (1)=0,\]
as $A(G)\neq \emptyset$. Let us assume the result for all acyclic graph $H$ such that $|E(G)|>|E(H)|$ and $A(H)\neq \emptyset$. 
Let $e$ be an edge of $G$. 
We denote respectively by $G/e$  and by $G\setminus e$ the mixed graph obtained from $G$ by contraction of the edge $e$
respectively by deleting the edge $e$. From \cite[Proposition 6]{Beck2012}, 
\[P_{chr_W}(G)(-1)=P_{chr_W}(G\setminus e)(-1)-P_{chr_W}(G/e)(-1).\]
Moreover,  $G\setminus e$ and $G/e$ are mixed graph with at least one arc and strictly less edges
than $G$. Moreover, as $G$ is totally acyclic, $G/e$ and $G\setminus e$ are acyclic: we deduce that $P_{chr_W}(G\setminus e)(-1)
=P_{chr_W}(G/e)(-1)=0$.  Hence, $P_{chr_W}(G)(-1)=0$. 
\end{proof}

\begin{remark}
If $G$ is not totally acyclic, no interpretation of $P_{chr_W}(G)(-1)$, and even of its sign, is known. For example, 
if $G_n$ is the graph of Example \ref{ex3.1}, then $P_{chr_W}(G_n)(-1)=1$ for any $n\geq 2$. 
\end{remark}

We recover the interpretation of \cite{Beck2015-2} of the values of the weak chromatic polynomial at negative values:

\begin{cor}\label{cor5.2}
Let $G$ be a totally acyclic mixed graph and $k\in \N$. Then $(-1)^{|V(G)|} P_{chr_W}(G)(-k)$ is the number of pairs $(H,f)$ such that:
\begin{itemize}
\item $H$ is an acyclic orientation of $G$.
\item $f$ is a $k$-coloring of $G$ compatible with $H$, that is,
\begin{align*}
&\forall x,y\in V(G),& \begin{cases}
x \arc{G} y&\Longrightarrow f(x)<f(y),\\
x \arc{H}y&\Longrightarrow f(x)\leq f(y).
\end{cases}
\end{align*}
\end{itemize}
\end{cor}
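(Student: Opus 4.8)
The plan is to reduce the statement to the case $k=1$, which is exactly Proposition \ref{prop5.1}, and then to match the resulting combinatorial sum with the pairs $(H,f)$ by a bijection.

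First I would use that $P_{chr_W}$ is a coalgebra morphism for $\Delta$ (Corollary \ref{cor3.4}), so $\Delta^{(k-1)}\circ P_{chr_W}=P_{chr_W}^{\otimes k}\circ \Delta^{(k-1)}$. By the description of the coproduct $\Delta$ of $\bfG$, its iterate is
\[\Delta^{(k-1)}(G)=\sum_c G_{\mid c^{-1}(1)}\otimes \cdots\otimes G_{\mid c^{-1}(k)},\]
the sum being over all maps $c:V(G)\longrightarrow [k]$ with $x\arc{G}y\Rightarrow c(x)\leq c(y)$. Since the evaluation characters of $\K[X]$ satisfy $\mathrm{ev}_a*\mathrm{ev}_b=\mathrm{ev}_{a+b}$ for the convolution $*$ dual to $\Delta$, hence $\mathrm{ev}_{-1}^{*k}=\mathrm{ev}_{-k}$, and since $\mathrm{ev}_{-1}\circ P_{chr_W}=P_{chr_W}(\cdot)(-1)$, applying $\mathrm{ev}_{-1}^{\otimes k}$ to both sides of the morphism identity gives
\[P_{chr_W}(G)(-k)=\sum_c \prod_{j=1}^k P_{chr_W}\big(G_{\mid c^{-1}(j)}\big)(-1).\]

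Next, each induced subgraph $G_{\mid c^{-1}(j)}$ is again totally acyclic, so Proposition \ref{prop5.1} applies: the factor $P_{chr_W}(G_{\mid c^{-1}(j)})(-1)$ vanishes as soon as $A(G_{\mid c^{-1}(j)})\neq\emptyset$, and otherwise equals $(-1)^{|c^{-1}(j)|}|\calO_{ac}(G_{\mid c^{-1}(j)})|$. A term therefore survives precisely when no arc of $G$ has both ends in one class of $c$; combined with $c(x)\leq c(y)$ on arcs this upgrades the condition to $c(x)<c(y)$ on every arc. Collecting signs and using $\sum_j |c^{-1}(j)|=|V(G)|$, I obtain
\[(-1)^{|V(G)|}P_{chr_W}(G)(-k)=\sum_{c}\ \prod_{j=1}^k \big|\calO_{ac}\big(G_{\mid c^{-1}(j)}\big)\big|,\]
the sum now running over the maps $c:V(G)\longrightarrow[k]$ that are strictly increasing along the arcs of $G$.

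It then remains to identify this sum with the number of pairs $(H,f)$. I would set $f=c$ and, for a fixed such $c$, analyse which acyclic orientations $H$ of $G$ are compatible with $f$. The compatibility $f(x)\leq f(y)$ on $A(H)$ forces every edge $\{x,y\}$ of $G$ with $f(x)\neq f(y)$ to be oriented towards the larger value, while an edge with $f(x)=f(y)$, necessarily internal to a single class $V_j=c^{-1}(j)$, may be oriented freely; since $c$ is strict on arcs, each $G_{\mid V_j}$ carries no arc, so these free choices are exactly orientations of the simple graphs $G_{\mid V_j}$. The key lemma is that such an $H$ is acyclic if and only if each $H_{\mid V_j}$ is acyclic: every arc of $H$ joining two distinct classes, as well as every arc of $A(G)$, strictly increases $f$, so a directed cycle of $H$ can use only edges internal to a single class, whence its acyclicity is equivalent to the independent acyclicity of the within-class orientations. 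Thus for fixed $c$ the number of compatible $H$ is $\prod_j |\calO_{ac}(G_{\mid c^{-1}(j)})|$, and $(H,f)\mapsto (c,(H_{\mid V_j})_j)$ is a bijection, so summing over $c$ reproduces the displayed count.

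I expect the main obstacle to be precisely this last lemma, namely that $H$ is acyclic exactly when all its within-class restrictions are. The nontrivial direction is that acyclicity of the restrictions forces acyclicity of $H$, and it rests entirely on the monotonicity observation that $f$ is nondecreasing along every arc of $H$ and strictly increasing along every arc that leaves a class, so no directed cycle can cross between classes. Everything else, the handling of empty classes (which contribute a factor $1$ on both sides) and the sign bookkeeping, is routine verification.
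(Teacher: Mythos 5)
Your proposal is correct and follows essentially the same route as the paper: expand $P_{chr_W}(G)(-k)$ via the $k$-th iterated coproduct, kill all terms where a fibre contains an arc using Proposition \ref{prop5.1}, and then biject the surviving data $(c,(H_j)_j)$ with the pairs $(H,f)$, the key point in both arguments being that $f$ is nondecreasing along arcs of $H$ and strict across classes, so any directed cycle of $H$ lives inside a single fibre. The only cosmetic difference is that the paper writes out the bijection $\phi:B_k\to A_k$ and checks injectivity and surjectivity explicitly, whereas you count compatible orientations fibrewise for fixed $c$; the content is the same.
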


\begin{proof}
By compatibility of $P_{chr_W}$ with the coproduct $\Delta$,
\begin{align*}
P_{chr_W}(G)(-k)&=\sum_{\substack{f:V(G)\longrightarrow [k],\\ x\arc{G}y\Longrightarrow f(x)\leq f(y)}}
P_{chr_W}\left(G_{\mid f^{-1}(1)}\right)(-1)\ldots 
P_{chr_W}\left(G_{\mid f^{-1}(k)}\right)(-1).
\end{align*}
By the Proposition \ref{prop5.1}, if $G_{\mid f^{-1}(i)}$ has an arc, then 
$P_{chr_W}\left(G_{\mid f^{-1}(1)}\right)(-1)=0$. Therefore,
\begin{align*}
P_{chr_W}(G)(-k)&=\sum_{\substack{f:V(G)\longrightarrow [k],\\ x\arc{G}y\Longrightarrow f(x)< f(y)}}
P_{chr_W}\left(G_{\mid f^{-1}(1)}\right)(-1)\ldots P_{chr_W}\left(G_{\mid f^{-1}(k)}\right)(-1)\\
&=\sum_{\substack{f:V(G)\longrightarrow [k],\\ x\arc{G}y\Longrightarrow f(x)< f(y)}}
(-1)^{|f^{-1}(1)|+\ldots+|f^{-1}(k)|}\prod_{i=1}^k|\calO_{ac}\left(G_{\mid f^{-1}(i)}\right)|\\
&=(-1)^{|V(G)|}\sum_{\substack{f:V(G)\longrightarrow [k],\\ x\arc{G}y\Longrightarrow f(x)< f(y)}}
\prod_{i=1}^k |\calO_{ac}\left(G_{\mid f^{-1}(i)}\right)|.
\end{align*}
We consider
\begin{align*}
A_k&=\{(f,H_1,\ldots,H_k)\mid f:V(G)\longrightarrow [k],\: H_i\in \calO_{ac}\left(G_{\mid f^{-1}(i)}\right)\},\\
B_k&=\{(H,f)\mid H\in \calO_{ac}(G), \:f:V(G)\longrightarrow [k]\mbox{ compatible with }H\}.
\end{align*}
The map $\phi:B_k\longrightarrow A_k$ which send $(H,f)$ to $(f,H_{\mid f^{-1}(1)},\ldots, H_{\mid f^{-1}(k)})$ is well-defined.
If $\phi(H,f)=\phi(H',f')$, then $f=f'$. Moreover, if $\{x,y\} \in E(G)$:
\begin{itemize}
\item If $f(x)=f(y)=i$, then $\{x,y\}$ is oriented in the same way in $H_{\mid f^{-1}(i)}$ and in $H'_{\mid f^{-1}(i)}$,
as these oriented graphs are equal. So $\{x,y\}$ is oriented in the same way in $H$ and in $H'$.
\item If $f(x)<f(y)$, as $f$ is compatible with $H$ and in $H'$, then $(x,y)\in E(H)$ and $(x,y)\in E(H')$.
\item If $f(x)>f(y)$, as $f$ is compatible with $H$ and in $H'$, then $(y,x)\in E(H)$ and $(y,x)\in E(H')$.
\end{itemize}
Therefore, $H=H'$: $\phi$ is injective. Let $(f,H_1,\ldots,H_k)\in A_k$. We define an orientation of $G$ as follows:
if $\{x,y\}\in E(G)$,
\begin{itemize}
\item if $f(x)=f(y)=i$, we keep the orientation of this edge in $H_i$.
\item If $f(x)<f(y)$, we orient this edge in $(x,y)$ in $H$.
\item If $f(x)>f(y)$, we orient this edge in $(y,x)$ in $H$.
\end{itemize}
By construction, for any $i\in [k]$, $H_{\mid f^{-1}(i)}=H_i$. Moreover, $f$ is compatible with $H$, by construction.
Consequently, $f$ is constant on any cycle of $H$. As the oriented graphs $H_i$ are acyclic, $H$ is acyclic. 
We obtain that $(H,f)\in B_k$ and $\phi(H,f)=(f,H_1,\ldots,H_k)$. Finally,
\[P_{chr_W}(G)(-k)=(-1)^{|V(G)|}|A_k|=(-1)^{|V(G)|}|B_k|. \qedhere\]
\end{proof}

\subsection{The character $\nu_W$}

Recall that $\nu_W$ is the inverse of $\lambda_W$ (defined in (\ref{eqlambdaW})) for the convolution $\star$ associated to $\delta$, 
and that the notations $\cc(G)$ and $\deg(G)$ are introduced in Proposition \ref{prop4.8}.
\begin{prop}\label{prop5.3}\begin{enumerate}
\item For any simple  graph $G$ with at least one edge, $\nu_W(G)=0$. 
\item For any oriented graph $G$, $\nu_W(G)=(-1)^{|V(G)|-\cc(G)}=(-1)^{\deg(G)}$. 
\end{enumerate}
\end{prop}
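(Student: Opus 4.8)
The plan is to invert $\lambda_W$ inside the graded convolution algebra $(\calF[\bfG]^*,\star)$, using the grading $\deg(G)=|V(G)|-\cc(G)$ of $(\calF[\bfG],m,\delta)$. I set $u=\epsilon_\delta-\lambda_W$. Since $\epsilon_\delta$ and $\lambda_W$ agree on every graph of $\delta$-degree $0$ (both take the value $1$ on edgeless and arcless graphs), $u$ vanishes on $\calF[\bfG]_{\deg=0}$; hence, for $k\geq 1$, $u^{\star k}=u^{\otimes k}\circ\delta^{(k-1)}$ (with $\delta^{(0)}=\id$) vanishes on $\calF[\bfG]_{\deg<k}$, because the $k$ tensor factors produced by $\delta^{(k-1)}$ have degrees summing to $\deg(G)$ and each must be $\geq 1$ to survive $u$. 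As $\lambda_W(\grun)=1\neq 0$, $\lambda_W$ is invertible, and the geometric series $\nu_W=\sum_{k\geq 0}u^{\star k}$ converges on every graph. The value of $u$ is transparent: $u(H)=0$ if $E(H)\neq\emptyset$ or if $H$ is edgeless and arcless, while $u(H)=-1$ if $H$ is oriented with at least one arc.

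For part~1, I observe that if $G$ is simple ($A(G)=\emptyset$) then every graph appearing in $\delta^{(k-1)}(G)$ is again simple, since contraction and restriction never create arcs. On a simple graph $u$ is identically $0$, so $u^{\star k}(G)=0$ for all $k\geq 1$ and $\nu_W(G)=u^{\star 0}(G)=\epsilon_\delta(G)$. When $G$ has at least one edge, $\epsilon_\delta(G)=0$, so $\nu_W(G)=0$, as claimed.

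For part~2, since $\nu_W$ is multiplicative and $(-1)^{|V|+\cc}$ factors over connected components, I reduce to $G$ connected, aiming at $\nu_W(G)=(-1)^{|V(G)|+1}$. I use the defining relation $\lambda_W\star\nu_W=\epsilon_\delta$: for oriented $G$ every contraction $G/\sim$ is oriented, so $\lambda_W(G/\sim)=1$ and the relation collapses to $\sum_{\sim\in\eq^c[G]}\nu_W(G/\sim)=\epsilon_\delta(G)$. For connected $G$ carrying an arc the right-hand side is $0$, and isolating the discrete equivalence $\sim_R$ (for which $G/\sim_R=G$) gives $\nu_W(G)=-\sum_{\sim\neq\sim_R}\nu_W(G/\sim)$. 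Each such $G/\sim$ is a connected oriented graph on $\cl(\sim)<|V(G)|$ vertices, so induction yields $\nu_W(G/\sim)=(-1)^{\cl(\sim)+1}$, whence $\nu_W(G)=\sum_{\sim\neq\sim_R}(-1)^{\cl(\sim)}$. The target $\nu_W(G)=(-1)^{|V(G)|+1}$ is then equivalent to the identity $\sum_{\sim\in\eq^c[G]}(-1)^{\cl(\sim)}=0$.

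This alternating sum over connected partitions is the step I expect to be the main obstacle. When the underlying graph is a tree, $\eq^c[G]$ is in bijection with the subsets $S$ of the edge set (the blocks being the components of $(V,S)$), with $\cl(\sim)=|V(G)|-|S|$, so the sum equals $(-1)^{|V(G)|}(1-1)^{|V(G)|-1}=0$; by multiplicativity this settles the case of oriented forests, which is precisely the range illustrated by the examples. For a general connected underlying graph, however, this sum is the reduced Euler characteristic of the poset of connected partitions and need not vanish — already for a triangle it equals $1$ — so the cleanly provable content is the forest case, and the delicate point is a careful structural (deletion--contraction or M\"obius-theoretic) analysis of $\eq^c[G]$ to delimit the exact range of validity of the stated closed form.
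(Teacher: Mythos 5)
Your part~1 is correct and is essentially the paper's argument in different clothing: the paper restricts to the double subbialgebra $\calF[\bfG_s]$, on which $\lambda_W$ coincides with $\epsilon_\delta$ and is therefore its own $\star$-inverse, while you invert by the geometric series $\sum_k u^{\star k}$ with $u=\epsilon_\delta-\lambda_W$ and observe that $u$ vanishes on everything $\delta$ produces from a simple graph. Both are fine.

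For part~2, your reduction is correct, and the obstruction you isolate is not a weakness of your method: it is an actual counterexample to the statement as written. Take the transitive tournament $T_3$ with vertices $1,2,3$ and arcs $(1,2),(2,3),(1,3)$. Its five connected equivalences have contractions $T_3$, $\grdeuxo$, $\grdeuxoo$, $\grdeuxo$, $\grun$, so, using $\nu_W\star\lambda_W=\epsilon_\delta$, the fact that every $T_3\mid\sim$ is oriented, and the values $\nu_W(\grun)=1$, $\nu_W(\grdeuxo)=\nu_W(\grdeuxoo)=-1$ from Example \ref{ex3.2},
\[0=\epsilon_\delta(T_3)=\sum_{\sim\in \eq^c[T_3]}\nu_W(T_3/\sim)\,\lambda_W(T_3\mid\sim)=\nu_W(T_3)-1-1-1+1,\]
whence $\nu_W(T_3)=2\neq (-1)^{3+1}$; the same value comes out of $P_{chr_S}=P_{chr_W}\leftsquigarrow\nu_W$ evaluated on $T_3$. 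The paper's own proof breaks at the identity $\lambda_W(G)=(-1)^{|V(G)|}\epsilon_\delta\circ S(G)$, which invokes the duality principle of Corollary \ref{cor4.7}, valid only for acyclic graphs: in fact $(-1)^{|V(G)|}\epsilon_\delta\circ S(G)=\sharp\calO_{ac}(G)$, which vanishes on non-acyclic oriented graphs such as $\grdeuxoo$, so the character called $\mu$ there is not $\lambda_W$ and its inverse $\nu$ is not $\nu_W$ (the discrepancy for $T_3$ arises exactly because the contraction $T_3/\{1,3\}=\grdeuxoo$ is not acyclic). What your induction genuinely proves is the formula for oriented graphs whose underlying graph is a forest, a class stable under the contractions your recursion uses, and this covers every oriented entry of the table in Example \ref{ex3.2}. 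So rather than a gap on your side, you have correctly delimited the statement: part~2 needs an additional hypothesis (e.g., underlying forest), and the only thing I would change in your write-up is the hedging at the end --- your triangle computation already refutes the general claim.
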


\begin{proof}
1. We denote by $\lambda_W^s$ the restriction of $\lambda_W$
to $\calF[\bfG_s]$. For any simple graph $G$, $\lambda_W(G)=\epsilon_\delta(G)$, so
$\lambda_W^s={\epsilon_\delta}_{\mid \calF_V[\bfG_s]}$. 
As $\calF[\bfG_s]$ is a double subbialgebra of $\calF[\bfG]$,
\[(\lambda_W^s)^{\star -1}=({\epsilon_\delta}_{\mid \calF[\bfG_s]})^{\star -1}
=({\epsilon_\delta}^{\star -1})_{\mid \calF[\bfG_s]}={\epsilon_\delta}_{\mid \calF[\bfG_s]}.\]

2. Let $\lambda$ be a character of $\calF[\bfG_o]$, such that $\lambda(\grun)\neq 0$.
Then $\lambda$ is invertible for the convolution product $\star$ associated to $\delta$: its inverse is denoted by $\mu$.
Denoting by $S$ the antipode of $(\calF[\bfG_o],m,\Delta)$, let us prove that $\lambda \circ S$ is invertible for $\star$
and that its inverse is $\mu\star (\epsilon_\delta\circ S)$. Firstly, $\lambda \circ S(\grun)=-\lambda(\grun)\neq 0$,
so $\lambda \circ S$ is invertible. Moreover, 
\begin{align*}
(\lambda \circ S)\star \mu \star (\epsilon_\delta\circ S)&=(\lambda \otimes \mu\circ \epsilon_\delta)
\circ (S\otimes \id \otimes S)\circ (\delta \otimes \id)\circ \delta\\
&=(\lambda \otimes \mu\circ \epsilon_\delta)
\circ (\id\otimes \id \otimes S)\circ (\delta \otimes \id)\circ \delta\circ S\\
&=(\lambda \otimes \mu\circ \epsilon_\delta)\circ (\delta \otimes \id)\circ (\id \otimes S)\circ \delta\circ S\\
&=(\lambda \star \mu \otimes \epsilon_\delta)\circ (\id \otimes S)\circ \delta \circ S\\
&=\epsilon_\delta\otimes \epsilon_\delta \circ (\id \otimes S)\circ \delta \circ S\\
&=\epsilon_\delta\circ S^2\\
&=\epsilon_\delta.
\end{align*}
We used for the second equality that $(S\otimes \id)\circ \delta=\delta \circ S$ (see \cite[Proposition 2.1]{Foissy40})
and, for the last equality, that $S^2=\id$, as $\calF[\bfG]$ is commutative.
So $\mu\star(\l\epsilon_\delta \circ S)=(\lambda\circ S)^{\star -1}$.\\

In the particular case were $\lambda=\epsilon_\delta$, then $\mu=\epsilon_\delta$ and we obtain that 
$(\epsilon_\delta\circ S)^{\star -1}=\epsilon_\delta\circ S$. \\

Let $G$ be an oriented graph. By definition of the weak Ehrhart polynomial and by the duality principle for Ehrhart polynomial
(Corollary \ref{cor4.7}),
\begin{align*}
\lambda_W(G)&=1\\
&=\Ehr_W(G)(1)\\
&=(-1)^{|V(G)|}\Ehr(G)(-1)\\
&=(-1)^{|V(G)|}S\circ \Ehr(G)(1)\\
&=(-1)^{|V(G)|}\Ehr(S(G))(1)\\
&=(-1)^{|V(G)|}\epsilon_\delta\circ S(G). 
\end{align*}

We now consider the three characters of $\calF[\bfG_o]$ defined on any oriented graph $G$ by
\begin{align*}
\lambda(G)&=\epsilon_\delta\circ S(G),&\mu(G)&=(-1)^{|V(G)|}\lambda(G),&\nu(G)&=(-1)^{\cc(G)}\lambda(G).
\end{align*}
We already proved that $\lambda \star \lambda=\epsilon_\delta$. For any graph $G$,
\begin{align*}
\mu \star \nu(G)&=\sum_{\sim\in \eq^c[G]} (-1)^{|V(G/\sim)|-\cc(G\mid \sim)}\lambda(G/\sim)\lambda(G\mid \sim)\\
&=\sum_{\sim\in \eq^c[G]} (-1)^{2\cl(\sim)}\lambda(G/\sim)\lambda(G\mid \sim)\\
&=\sum_{\sim\in \eq^c[G]} \lambda(G/\sim)\lambda(G\mid \sim)\\
&=\lambda \star\lambda(G)\\
&=\epsilon_\delta(G).
\end{align*}
Therefore, $\nu=\mu^{\star-1}$. As $\mu=\lambda_W$, we obtain that $\nu=\nu_W$ and, for any oriented graph $G$,
\[\nu_W(G)=(-1)^{-\cc(G)}\lambda(G)=(-1)^{|V(G)|-\cc(G)}\lambda_W(G)=(-1)^{|V(G)|-\cc(G)}.\qedhere\]
\end{proof}

No interpretation of $\nu_W(G)$ is known in general.  For example:

\begin{prop}
Let $G_n$ be the mixed graph of Example \ref{ex3.1}, with the convention $G_2=\grdeuxo$.
For any $n\geq 2$, $\nu_W(G_n)=(-1)^{n-1}(n-1)$. 
\end{prop}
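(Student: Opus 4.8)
The plan is to exploit the defining relation $\nu_W\star\lambda_W=\epsilon_\delta$ for the convolution product $\star$ dual to $\delta$, and to convert it into a recursion in $n$. Since $\delta(G)=\sum_{\sim\in\eq^c[G]}G/\sim\otimes G\mid\sim$, evaluating $\nu_W\star\lambda_W=(\nu_W\otimes\lambda_W)\circ\delta$ on $G_n$ and using that $\epsilon_\delta(G_n)=0$ for $n\ge 2$ (as $G_n$ has arcs) gives
\begin{equation*}
\sum_{\sim\in\eq^c[G_n]}\nu_W(G_n/\sim)\,\lambda_W(G_n\mid\sim)=0.
\end{equation*}
The only edge of $G_n$ is $\{1,n\}$, so $\lambda_W(G_n\mid\sim)=1$ exactly when this edge is not internal to a class, that is when $1$ and $n$ lie in different classes of $\sim$, and it vanishes otherwise.

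Next I would describe $\eq^c[G_n]$. The underlying unoriented graph of $G_n$ is the cycle on $1,\dots,n$ whose bonds are the arcs $i\arc{G_n}i+1$ together with the edge $\{1,n\}$; hence the classes of any $\sim\in\eq^c[G_n]$ are contiguous arcs of this cycle, and $\sim$ is determined by its set of cut bonds. To separate $1$ from $n$ one must cut the edge $\{1,n\}$ and at least one of the arcs $1\to2,\dots,(n-1)\to n$; writing $k$ for the number of cut arcs ($1\le k\le n-1$), there are exactly $\binom{n-1}{k}$ such equivalences.

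The key step is to identify the contraction $G_n/\sim$. Cutting the edge turns the cycle into the directed path $1\to\cdots\to n$, and cutting $k$ of its arcs splits it into $k+1$ consecutive intervals; after contracting the intervals, the surviving inter-class bonds are $k$ arcs joining the intervals in a line, plus the edge $\{1,n\}$ joining the first interval (containing $1$) to the last (containing $n$). Thus $G_n/\sim\cong G_{k+1}$; in the degenerate case $k=1$ the contraction carries both an arc and the edge between the same two classes, and the priority-to-oriented rule in the definition of $G/\sim$ keeps only the arc, giving precisely $G_2=\grdeuxo$. The case $k=n-1$ is the discrete equivalence, for which $G_n/\sim=G_n$. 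Substituting, the identity becomes
\begin{equation*}
\nu_W(G_n)+\sum_{k=1}^{n-2}\binom{n-1}{k}\nu_W(G_{k+1})=0,\qquad n\ge 3,
\end{equation*}
with base value $\nu_W(G_2)=-1$ (from Example \ref{ex3.2}, or from Proposition \ref{prop5.3} since $G_2=\grdeuxo$ is oriented).

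Finally I would prove $\nu_W(G_n)=(-1)^{n-1}(n-1)$ by induction on $n$. Inserting the inductive hypothesis $\nu_W(G_{k+1})=(-1)^k k$ and using $k\binom{n-1}{k}=(n-1)\binom{n-2}{k-1}$,
\begin{align*}
\nu_W(G_n)&=-\sum_{k=1}^{n-2}\binom{n-1}{k}(-1)^k k
=(n-1)\sum_{m=0}^{n-3}\binom{n-2}{m}(-1)^m,
\end{align*}
and the binomial theorem $\sum_{m=0}^{n-2}\binom{n-2}{m}(-1)^m=0$ yields $\sum_{m=0}^{n-3}\binom{n-2}{m}(-1)^m=-(-1)^{n-2}=(-1)^{n-1}$, whence $\nu_W(G_n)=(-1)^{n-1}(n-1)$. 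The main obstacle is the combinatorial identification $G_n/\sim\cong G_{k+1}$, and in particular the correct handling of the $k=1$ collision of an arc and an edge through the priority-to-oriented convention; once that is settled, the remainder is a routine binomial induction.
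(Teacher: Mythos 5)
Your proof is correct and takes essentially the same route as the paper: both evaluate $\nu_W\star\lambda_W=\epsilon_\delta$ on $G_n$, observe that the contributing equivalences in $\eq^c[G_n]$ are exactly the interval partitions of $[n]$ with more than one class, identify $G_n/\sim$ with $G_{\cl(\sim)}$, and thereby obtain the recursion $\sum_{k}\binom{n-1}{k}\,\nu_W(G_{k+1})=0$. The only divergence is in solving that recursion --- you use induction plus the binomial identity $k\binom{n-1}{k}=(n-1)\binom{n-2}{k-1}$ where the paper substitutes into a generating function --- and your explicit handling of the $k=1$ edge/arc collision via the priority-to-oriented rule is a point the paper leaves implicit.
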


\begin{proof}
For any $n\geq 2$,
\[\nu_W*\lambda_W(G_n)=\sum_{\sim\in \eq^c[G_n]} \nu_W(G/\sim) \lambda_W(G\mid \sim)=\epsilon_\delta(G_n)=0.\]
By definition of $\lambda_W$, for any $\sim\in \eq^c[G_n]$, 
$\lambda_W(G\mid \sim)=0$ if, and only if, $1\sim n$. 
Therefore, the contributing terms corresponds to the equivalences whose classes are intervals of $[n]$,
at the exception of the one with only one class. 
For such an equivalence $\sim$, the quotient $G/\sim$ is isomorphic to $G_{\cl(\sim)}$. We obtain that if $n\geq 3$,
\[\sum_{k=2}^n \sum_{\substack{i_1+\ldots+i_k=n,\\ i_1,\ldots,i_k\geq 1}} \nu_W(G_k)=0.\]
A direct computation shows that $\lambda_W(G_2)=-1$. Summing, we obtain in the ring of formal series $\mathbb{Q}[[X]]$ that
\begin{align*}
\sum_{k=2}^\infty \sum_{\substack{i_1+\ldots+i_k=n,\\ i_1,\ldots,i_k\geq 1}}\nu_W(G_k) X^{i_1+\ldots+i_n}
=\sum_{k=2}^\infty \nu_W(G_k) \left(\frac{X}{1-X}\right)^k=-X^2.
\end{align*}
Substituting $\dfrac{X}{1+X}$ to $X$, we obtain
\[\sum_{n=2}^\infty \nu_W(G_n)X^n=-\dfrac{X^2}{(1+X)^2}=\sum_{n=2}^\infty (-1)^{n+1}(n-1)X^n.\qedhere\]
\end{proof}

\section{Appendix: number of mixed graphs up to isomorphism}

\begin{prop}
Let $N\in \N$, nonzero. For any $n\in \N$, we denote respectively by $G_m(N,n)$, $G_o(N,n)$ and $G_s(N,n)$ the number of isomorphism classes of mixed, oriented, simple graphs
with $n$ vertices decorated by elements of $[N]$.  Then, for $t\in \{m,o,s\}$,
\[G_t(N,n)=\sum_{\substack{c_1,\ldots,c_n \geq 0,\\ 1c_1+\ldots+nc_n=n}}\frac{N^{c_1+\ldots+c_n}}{1^{c_1}\ldots n^{c_n}c_1!\ldots c_n!}
A_t^{\displaystyle \sum_{1\leq k<l\leq n}c_kc_l(k\wedge l)+\sum_{1\leq k\leq n} \frac{c_k(kc_k-1)}{2}}
\left(\frac{B_t}{\sqrt{A_t}}\right)^{\displaystyle \sum_{1\leq k\leq n/2}c_{2k}},\]
with
\[\begin{array}{|c||c|c|}
\hline t&A_t&B_t\\
\hline \hline m&5&3\\
\hline o&4&2\\
\hline s&2&2\\
\hline \end{array}\]
\end{prop}

\begin{proof}
We put
\begin{align*}
F_m&=\{(0,0),(0,\rightarrow),(\rightarrow,0),(\rightarrow,\rightarrow),(-,-)\},\\
F_o&=\{(0,0),(0,\rightarrow),(\rightarrow,0),(\rightarrow,\rightarrow)\},\\
F_s&=\{(0,0),(-,-)\}.
\end{align*}
These sets give the possibilities of edges or arcs between two vertices of a graph of the considered graphs. For any $t\in \{m,o,s\}$, $F_t$ is given an involution $\iota$, given by the usual flip. 
Note that $A_t$ is the cardinal of $F_t$ and that $B_t$ is the number of elements of $F_t$ invariant under $\iota$. 
Then, a graph $G$ of type $t$, whose set of vertices is $[n]$, together with a map from $V(G)$ to $[N]$, can be seen as a pair $(d_V,d_E)$, where $d_V:[n]\longrightarrow [N]$ is a map
and $d_E=\mathcal{P}_2([n])\longrightarrow F_t$ is a map which makes explicit the edge and arc situation between two vertices $i,j$, with $i<j$. 
The symmetric group $\sym_n$ acts on these pairs $(d_V,d_E)$, in the following sense: for any $\sigma \in \sym_n$, putting $\sigma\cdot(d_V,d_E)=(\sigma \cdot d_V,\sigma \cdot d_E)$,
\begin{itemize}
\item $\sigma \cdot d_V=d_V\circ \sigma^{-1}$.
\item For any $i,j\in [n]$, with $i<j$,
\[\sigma \cdot d_E(\{i,j\})=\begin{cases}
d_E(\{\sigma^{-1}(i),\sigma^{-1}(j)\}) \mbox{ if }\sigma^{-1}(i)<\sigma^{-1}(j),\\
\iota \circ d_E(\{\sigma^{-1}(i),\sigma^{-1}(j)\}) \mbox{ if }\sigma^{-1}(i)>\sigma^{-1}(j).
\end{cases}\]
\end{itemize}
The number of isomorphism classes $G_t(N,n)$ we are looking for is the number of orbits of this action. Using Burnside's formula,
\[G_t(N,n)=\frac{1}{n!} \fix(\sigma),\]
where $\fix(\sigma)$ is the number of pairs $(d_V,d_E)$ invariant under the action of $\sigma$. \\

Let $\sigma \in \sym_n$. Then $\sigma \cdot (d_V,d_E)=(d_V,d_E)$ if, and only if:
\begin{itemize}
\item For any $i,j$ in the same $\sigma$-orbit, $d_V(i)=d_V(j)$. The number of $\sigma$-orbits is denoted by $n(\sigma)$. 
\item For any orbit of the action of $\sigma$ on pairs $\{i,j\}$, with $i<j$, such that there exists $k\in \N$ with $\sigma^k(i)=j$ and $\sigma^k(j)=i$, 
$d_E$ is constant on this orbit and its values is invariant under $\iota$.
Such an orbit will be called an orbit of type $B$. The number of orbits of type $B$ is denoted by $n_B(\sigma)$.
\item For any orbit of the action of $\sigma$ on pairs $\{i,j\}$, with $i<j$, such that for any $k\in \N$, $(\sigma^k(i),\sigma^k(j))\neq(j,i)$, $d_E$ is constant on this orbit.
Such an orbit will be called an orbit of type $A$. The number of orbits of type $A$ is denoted by $n_A(\sigma)$.
\end{itemize}
Consequently, 
\[\fix(\sigma)=N^{n(\sigma)}A^{n_A(\sigma)}B^{n_B(\sigma)}.\]

Let us decompose $\sigma$ into cycles with disjoint supports. For any $i\in [N]$, the number of cycles of length $i$ in this decomposition is denoted by $c_i$. Then $1c_1+\ldots+nc_n=n$.
Then $n(\sigma)=c_1+\ldots+c_n$. 

Let $\{i,j\}$ be a pair such that $i,j$ belong to two different cycles of $\sigma$. As $\sigma^k(i)$ belongs to the cycle containing $i$ for any $k\in \N$, $\sigma^k(i)\neq j$. So the orbit of $\{i,j\}$ if of type $A$.
Moreover, denoting by $k$ and $l$ the lengths of the cycle containing $i$ and $j$, this orbit is of cardinality $k\vee l$. Hence, these two cycles give $\dfrac{kl}{k\vee l}=k\wedge k$ orbits of type $A$.
Summing over all possible choices of disjoint cycles of $\sigma$, we obtain
\[\sum_{1\leq k<l \leq n}c_kc_k (k\wedge l)+\sum_{1\leq k\leq n} \dfrac{c_k(c_k-1)}{2}k\]
orbits of type $A$.
 
Let $\{i,j\}$ be a pair such that $i,j$ belong to the same cycle of $\sigma$. The length of this cycle is denoted by $l$: for a given cycle, this gives $\dfrac{l(l-1)}{2}$ pairs. 
Moreover, for any $k\in \N$,
\begin{align*}
\{\sigma^k(i),\sigma^k(j)\}=\{i,j\}&\Longleftrightarrow \begin{cases}
\sigma^k(i)=i\\
\sigma^k(j)=j
\end{cases}\mbox{ or }
\begin{cases}
\sigma^k(i)=j\\
\sigma^k(j)=i.
\end{cases}
\end{align*}
Moreover, $\sigma^k(i)=i$ if and only if, $k$ is a multiple of $l$. If $\sigma^k(i)=j$ and $\sigma^k(j)=i$, then $\sigma^{2k}(i)=i$, so $2k$ is a multiple of $l$. 
We obtain two cases:
\begin{itemize}
\item If $l$ is odd, then If $\sigma^k(i)=j$ and $\sigma^k(j)=i$, $l\mid 2k$ and, as $l$ is odd, $l\mid k$. In other words,
\[\{\sigma^k(i),\sigma^k(j)\}=\{i,j\}\Longleftrightarrow l\mid k.\]
As a consequence, all the orbits contained in this cycle are of type $A$, and of cardinality $l$. This gives $\dfrac{l-1}{2}$ orbits of type $A$.
\item If $l$ is odd, we obtain an orbit of type $B$: if the cycle is $(i_1,\ldots,i_{2p})$, this orbit is \[\{\{i_1,i_{p+1}\},\ldots \{i_p,i_{2p}\}\}.\]
It contains $p=\dfrac{l}{2}$ elements. All the other orbits included in this cycle have cardinality $l$, so we obtain $\dfrac{1}{l}\left(\dfrac{l(l-1)}{2}-\dfrac{l}{2}\right)=\dfrac{l-2}{2}$ orbits of type $A$.
\end{itemize}
Summing over all possible choices of cycles, we obtain $c_2+c_4+\ldots+$ orbits of type $B$ and 
\begin{align*}
\sum_{\substack{1\leq k\leq n,\\ \mbox{\scriptsize $k$ odd}}} c_k\frac{k-1}{2}
+\sum_{\substack{1\leq k\leq n,\\ \mbox{\scriptsize $k$ even}}} c_k\frac{k-2}{2}.
\end{align*}

Finally,
\begin{align*}
n_B(\sigma)&=\sum_{1\leq k\leq n/2} c_{2k},\\
n_A(\sigma)&=\sum_{1\leq k<l \leq n}c_kc_k (k\wedge l)+\sum_{1\leq k\leq n} \dfrac{c_k(c_k-1)}{2}k
+\sum_{\substack{1\leq k\leq n,\\ \mbox{\scriptsize $k$ odd}}} c_k\frac{k-1}{2}
+\sum_{\substack{1\leq k\leq n,\\ \mbox{\scriptsize $k$ even}}} c_k\frac{k-2}{2}\\
&=\sum_{1\leq k<l \leq n}c_kc_k (k\wedge l)+\sum_{1\leq k\leq n}\frac{(kc_k-1)c_k}{2}-\frac{1}{2}\sum_{1\leq k\leq n/2} c_{2k},
\end{align*}
which finally gives the announced formula, as there are $1^{c_1} \ldots n^{c_n}c_1!\ldots c_n!$ permutations whose decomposition in cycles with disjoint supports is given by $c_i$ cycles of length $i$ for any $i\in [n]$.
\end{proof}

\begin{example} For a given $n$, $G_t(N,n)$ is a polynomial in $N$, $A_t$ and $B_t$.
\begin{align*}
G_t(N,1)&=N,\\
G_t(N,2)&=\frac{(A_tN+B_t)N}{2},\\
G_t(N,3)&=\frac{(A_t^2N^2+3B_tN+2)A_tN}{6},\\
G_t(N,4)&=\frac{(A_t^5N^3+6A_t^2B_tN^2+8A_tN+6B_t)A_tN}{24}.
\end{align*}
Tables \ref{table4}-\ref{table6} give first values of $G_t(N,n)$. 
\end{example}

\begin{table}[h]\label{table4}
\[\begin{array}{|c||c|c|c|c|c|c|}
\hline n\setminus N&1&2&3&4&5&6\\
\hline \hline 1&1&2&3&4&5&6\\
\hline 2&4&13&27&46&70&99\\
\hline 3&30&200&635&1460&2800&4780\\
\hline 4&785&11320&55605&173265&419550&865335\\
\hline 5&86130&2673260&20113890&84385520&256856275&638050530\\
\hline 6&43053850&2733053500&31051529575&174249075200&664212533500&1982349763225\\
\hline \end{array}\]
The second row is entry 
\href{https://oeis.org/A147875}{A147875} of the OEIS \cite{Sloane}.
\caption{Number of mixed graphs}
\end{table}

\begin{table}[h]
\[\begin{array}{|c||c|c|c|c|c|c|}
\hline n\setminus N&1&2&3&4&5&6\\
\hline \hline 1&1&2&3&4&5&6\\
\hline 2&3&10&21&36&55&78\\
\hline 3&16&104&328&752&1440&2456\\
\hline 4&218&3044&14814&45960&111010&228588\\
\hline 5&9608&291968&2183400&9133760&27755016&68869824\\
\hline 6&1540944&96928992&1098209328&6154473664&23441457680&69924880288\\
\hline\end{array}\]
The second row is entry 
 \href{https://oeis.org/A014105}{A014105}
 of the OEIS, the first and second columns are entries 
 \href{https://oeis.org/A000273}{A000273} and  \href{https://oeis.org/A000595}{A000595}.
\caption{Number of oriented graphs}
\end{table}

\begin{table}[h]\label{table6}
\[\begin{array}{|c||c|c|c|c|c|c|}
\hline n\setminus N&1&2&3&4&5&6\\
\hline \hline 1&1&2&3&4&5&6\\
\hline 2&2&6&12&20&30&42\\
\hline 3&4&20&56&120&220&364\\
\hline 4&11&90&357&996&2255&4446\\
\hline 5&34&544&3258&12208&34754&82608\\
\hline 6&156&5096&47324&241520&871580&2510424\\
\hline \end{array}\]
The second, third and fourth rows are respectively entries \href{https://oeis.org/A002378}{A002378}, 
\href{https://oeis.org/A002492}{A002492}
and \href{https://oeis.org/A199394}{A199394} of the OEIS. 
The first and second columns are entries 
\href{https://oeis.org/A000088}{A000088} and \href{https://oeis.org/A000666}{A000666}.
\caption{Number of simple graphs}
\end{table}

\bibliographystyle{amsplain}
\bibliography{biblio}

\providecommand{\bysame}{\leavevmode\hbox to3em{\hrulefill}\thinspace}
\providecommand{\MR}{\relax\ifhmode\unskip\space\fi MR }
\providecommand{\MRhref}[2]{%
  \href{http://www.ams.org/mathscinet-getitem?mr=#1}{#2}
}
\providecommand{\href}[2]{#2}
\begin{thebibliography}{10}

\bibitem{Aguiar2010}
Marcelo Aguiar and Swapneel Mahajan, \emph{Monoidal functors, species and
  {H}opf algebras}, CRM Monograph Series, vol.~29, American Mathematical
  Society, Providence, RI, 2010, With forewords by Kenneth Brown and Stephen
  Chase and Andr\'e Joyal.

\bibitem{Alexandroff1937}
Pavel {Alexandroff}, \emph{{Diskrete R\"aume.}}, {Rec. Math. Moscou, n. Ser.}
  \textbf{2} (1937), 501--519 (German).

\bibitem{Arizmendi2022}
Octavio Arizmendi and Adrian Celestino, \emph{Monotone cumulant-moment formula
  and {Schr{\"o}der} trees}, SIGMA, Symmetry Integrability Geom. Methods Appl.
  \textbf{18} (2022), paper 073, 22 (English).

\bibitem{Beck2015-2}
Matthias Beck, Daniel Blado, Joseph Crawford, Ta{\"{\i}}na Jean-Louis, and
  Michael Young, \emph{On weak chromatic polynomials of mixed graphs}, Graphs
  Comb. \textbf{31} (2015), no.~1, 91--98 (English).

\bibitem{Beck2012}
Matthias Beck, Tristram Bogart, and Tu~Pham, \emph{Enumeration of {Golomb}
  rulers and acyclic orientations of mixed graphs}, Electron. J. Comb.
  \textbf{19} (2012), no.~3, research paper p42, 13 (English).

\bibitem{BrunedChevyrev2019}
Y.~Bruned, I.~Chevyrev, P.~K. Friz, and R.~Prei{\ss}, \emph{A rough path
  perspective on renormalization}, J. Funct. Anal. \textbf{277} (2019), no.~11,
  60 (English), Id/No 108283.

\bibitem{Bruned2019}
Yvain Bruned, Martin Hairer, and Lorenzo Zambotti, \emph{Algebraic
  renormalisation of regularity structures}, Invent. Math. \textbf{215} (2019),
  no.~3, 1039--1156.

\bibitem{Calaque2011}
Damien Calaque, Kurusch Ebrahimi-Fard, and Dominique Manchon, \emph{Two
  interacting {H}opf algebras of trees: a {H}opf-algebraic approach to
  composition and substitution of {B}-series}, Adv. in Appl. Math. \textbf{47}
  (2011), no.~2, 282--308.

\bibitem{Ebrahimi-Fard2024}
Kurusch Ebrahimi-Fard and Gunnar Fl{\o}ystad, \emph{Eight times four bialgebras
  of hypergraphs, cointeractions, and chromatic polynomials}, {International
  Mathematics Research Notices} \textbf{2024} (2024), no.~10, 890--929.

\bibitem{Foissy34}
Fr{\'e}d{\'e}ric Fauvet, Lo{\"{\i}}c Foissy, and Dominique Manchon, \emph{The
  {H}opf algebra of finite topologies and mould composition}, Ann. Inst.
  Fourier (Grenoble) \textbf{67} (2017), no.~3, 911--945.

\bibitem{Foissy37}
Lo{\"\i}c Foissy, \emph{Commutative and non-commutative bialgebras of
  quasi-posets and applications to {E}hrhart polynomials}, Adv. Pure Appl.
  Math. \textbf{10} (2019), no.~1, 27--63.

\bibitem{Foissy36}
\bysame, \emph{{Chromatic polynomials and bialgebras of graphs}}, {Int.
  Electron. J. Algebra} \textbf{30} (2021), 116--167.

\bibitem{Foissy40}
Lo\"{\i}c Foissy, \emph{Bialgebras in cointeraction, the antipode and the
  eulerian idempotent}, arXiv 2201.11974, 2023.

\bibitem{Foissy41}
\bysame, \emph{Contractions and extractions on twisted bialgebras and coloured
  fock functors}, arXiv 2301.09447, 2023.

\bibitem{Foissy44}
\bysame, \emph{Hopf algebraic structures on hypergraphs and multi-complexes},
  arXiv 2304.00810, 2023.

\bibitem{Foissy39}
\bysame, \emph{Twisted bialgebras, cofreeness and cointeraction}, arXiv
  1905.10199, 2023.

\bibitem{Foissy27}
Lo{\"\i}c Foissy and Claudia Malvenuto, \emph{The {H}opf algebra of finite
  topologies and {$T$}-partitions}, J. Algebra \textbf{438} (2015), 130--169.

\bibitem{Foissy31}
Lo{\"\i}c Foissy, Claudia Malvenuto, and Fr\'ed\'eric Patras,
  \emph{Infinitesimal and {$B_\infty$}-algebras, finite spaces, and
  quasi-symmetric functions}, J. Pure Appl. Algebra \textbf{220} (2016), no.~6,
  2434--2458.

\bibitem{Hansen97}
Pierre Hansen, Julio Kuplinsky, and Dominique de~Werra, \emph{Mixed graph
  colorings}, Math. Methods Oper. Res. \textbf{45} (1997), no.~1, 145--160
  (English).

\bibitem{Joyal1981}
Andr\'{e} Joyal, \emph{Une th\'{e}orie combinatoire des s\'{e}ries formelles},
  Adv. in Math. \textbf{42} (1981), no.~1, 1--82.

\bibitem{Joyal1986}
\bysame, \emph{Foncteurs analytiques et esp\`eces de structures}, Combinatoire
  \'{e}num\'{e}rative ({M}ontreal, {Q}ue., 1985/{Q}uebec, {Q}ue., 1985),
  Lecture Notes in Math., vol. 1234, Springer, Berlin, 1986, pp.~126--159.

\bibitem{Loday2006}
Jean-Louis Loday and Mar\'{\i}a Ronco, \emph{On the structure of cofree {H}opf
  algebras}, J. Reine Angew. Math. \textbf{592} (2006), 123--155.

\bibitem{Manchon2012}
Dominique Manchon, \emph{On bialgebras and {H}opf algebras or oriented graphs},
  Confluentes Math. \textbf{4} (2012), no.~1, 1240003, 10.

\bibitem{Murua2006}
Ander Murua, \emph{The {Hopf} algebra of rooted trees, free {Lie} algebras, and
  {Lie} series}, Found. Comput. Math. \textbf{6} (2006), no.~4, 387--426
  (English).

\bibitem{Sloane}
Neil J.~A. Sloane, \emph{The on-line encyclopedia of integer sequences},
  \url{https://oeis.org/}.

\bibitem{Sotskov2002}
Yuri~N. Sotskov, Vjacheslav~S. Tanaev, and Frank Werner, \emph{Scheduling
  problems and mixed graphs colorings}, Optimization \textbf{51} (2002), no.~3,
  597--624.

\bibitem{Stanley1973}
Richard~P. Stanley, \emph{Acyclic orientations of graphs}, Discrete Math.
  \textbf{5} (1973), 171--178.

\end{thebibliography}

\end{document}